\documentclass[]{fundam}

\usepackage{amsmath,amssymb}
\usepackage{braket}
\usepackage{tikz}
\usepackage{multirow}
\usetikzlibrary{arrows,automata}

\newcommand{\R}{\mathbb{R}}
\newcommand{\Z}{\mathbb{Z}}
\newcommand{\N}{\mathbb{N}}
\newcommand{\I}{\mathbb{X}}
\newcommand{\B}{\mathcal{B}}
\newcommand{\X}{\mathcal{X}}
\newcommand{\W}{\mathcal{W}}
\newcommand{\INF}{{}^\infty}

\newtheorem{question}{Question}
\newtheorem{conjecture}{Conjecture}

\begin{document}
\title{Playing with Subshifts\thanks{Research supported by the Academy of Finland Grant 131558}}

\author{Ville Salo\\
TUCS -- Turku Centre for Computer Science, Finland, \\
University of Turku, Finland, \\
vosalo{@}utu.fi
\and Ilkka T\"orm\"a\\
TUCS -- Turku Centre for Computer Science, Finland, \\
University of Turku, Finland, \\
iatorm{@}utu.fi}

\maketitle

\runninghead{V. Salo, I. T\"orm\"a}{Playing with Subshifts}

\begin{abstract}
We study the class of word-building games, where two players pick letters from a finite alphabet to construct a finite or infinite word. The outcome is determined by whether the resulting word lies in a prescribed set (a win for player $A$) or not (a win for player $B$). We focus on symbolic dynamical games, where the target set is a subshift. We investigate the relation between the target subshift and the set of turn orders for which $A$ has a winning strategy.
\end{abstract}

\begin{keywords}
subshifts, games, perfect information, entropy
\end{keywords}

\section{Introduction}

Subshifts are the central objects of symbolic dynamics, and also have an important role in coding and information theory. A subshift is a set of infinite sequences over a finite alphabet defined by forbidden patterns. A sequence lies in the subshift if and only if no forbidden pattern occurs in it. Subshifts can be viewed as streams of information flowing from a transmitter towards a receiver, and the forbidden patterns represent restrictions of the medium used. The asymptotic rate at which information can be sent is captured by the notion of entropy.

Consider the following scenario: A person (we will call her Alice, or $A$) is trying to send some information through the medium. She does this by choosing, one by one, a letter from the alphabet, and constructing an infinite sequence from them. She can pick any letters she fancies, as long as the resulting sequence contains no forbidden pattern. But at some prescribed moments an adversary (whom we will call Bob, or $B$) manages to insert a letter to the end of the finite sequence $A$ has constructed thus far. This may cause a forbidden word to appear, destroying the information channel, but not necessarily: if $A$ knows beforehand the moment at which $B$ will interfere, she can choose her sequence so that no matter which letter $B$ inserts, no forbidden pattern results. The situation can be thought of as a game, where $A$ wins if no forbidden pattern ever occurs.

Games of this form have previously been studied in \cite{Pe11}, from the point of view of combinatorics on words. The article in question concentrates on subshifts in which no long enough approximate squares, or patterns approximately of the form $ww$, occur. The results of the paper state that for certain definitions of `approximate' and `long enough', $A$ has a winning strategy even if every choice for $A$ is followed by some $t$ choices for $B$. A similar notion has also appeared in \cite{LoMaPa13}, where the authors study the set of coordinates of a given configuration of a subshift that can be chosen independently of the rest, and the notion of \emph{independence entropy} arising from these considerations. Their methods are related to our combinatorial arguments in Section~\ref{sec:Entropy}.

In this article, we take a different view on the situation. Instead of asking whether a given subshift admits a winning strategy for $A$ when a prescribed order for the turns of $A$ and $B$ is used, we fix a subshift $X$ and study the set of turn orders for which $A$ has a winning strategy on it. We call this set the winning shift of $X$, since it actually turns out to be a subshift over the alphabet $\{A,B\}$. We study the realization of binary subshifts as winning shifts, and the connections between properties of $X$ and its winning shift. In particular, connections between the entropies of $X$ and its winning shift are obtained. While our main interest lies in SFTs and sofic subshifts, we also briefly study the winning shifts of minimal subshifts, and obtain, for example, a characterization of Sturmian words in terms of the winning shifts of their orbit closures. We also define winning shifts for two-directional subshifts, and prove some individual results that hold in this formalism but not in the one-directional case, or vice versa. Our results are mainly of mathematical interest.

\section{Definitions}

\subsection{Standard Definitions}

Let $S$ be a finite set, called the \emph{alphabet}. We denote by $S^*$ the set of finite words over $S$, and by $\lambda$ the empty word of length $0$. For $n \in \N \cup \{ \infty \}$, denote $S^{\leq n} = \{ w \in S^* \;|\; |w| \leq n \}$. The set $S^\N$ of infinite state sequences, or \emph{configurations}, is called the \emph{one-directional full shift on $S$}. If $x \in S^\N$ and $i \in \N$, then we denote by $x_i$ the $i$th coordinate of $x$, and we adopt the shorthand notation $x_{[i,j]} = x_i x_{i+1} \ldots x_j$. If $w \in S^*$, we denote $w \sqsubset x$, and say that \emph{$w$ appears in $x$}, if $w = x_{[i,i+|w|-1]}$ for some $i$. For words $u,v \in S^*$, we use the notation $u v \INF = u v v v \cdots$. We make analogous definitions for two-directional configurations, that is, elements of the set $S^\Z$, and write $S^\I$ for either $S^\N$ or $S^\Z$, when both are applicable. For a letter $c \in S$ and $u \in S^*$, we denote by $|u|_c$ the number of occurrences of $c$ in $u$. We say that a language $L \subset S^*$ is \emph{left (right) extendable}, if for all $w \in L$ there exists $c \in S$ such that $cw \in L$ ($wc \in L$). We say $L$ is \emph{factor-closed} if whenever $w \in L$ and $v \sqsubset w$, we have $v \in L$.

We define a metric $d$ on $S^\I$ by setting $d(x,y) = 0$ if $x = y$, and by setting $d(x,y) = 2^{-i}$ where $i = \min \{ |j| \;|\; x_j \neq y_j \}$ if $x \neq y$. The topology defined by $d$ makes $S^\I$ a compact metric space. We define the \emph{shift map} $\sigma : S^\I \to S^\I$ by $\sigma(x)_i = x_{i+1}$. Clearly $\sigma$ is a continuous surjection (homeomorphism in the two-directional case) from $S^\I$ to itself.

A \emph{subshift} is a closed subset $X \subset S^\I$ with the property $\sigma(X) \subset X$. When it cannot be deduced from the context, we will explicitly state whether a subshift is one- or two-directional. We define $\B_k(X) = \{ w \in \Sigma^k \;|\; \exists x \in X : w \sqsubset x\}$ as the set of words of length $k$ appearing in $X$, and define the \emph{language} of $X$ as $\B(X) = \bigcup_{k \in \N} \B_k(X)$. Since a subshift is uniquely defined by its language, and every right (and left in the two-directional case) extendable and factor-closed language defines a subshift \cite{LiMa95}, we may write $X = \B^{-1}(L)$, if $\B(X)$ is the set of factors of the extendable language $L \subset S^*$. Alternatively, a subshift is defined by a set $F \in S^*$ of \emph{forbidden words} as the set of configurations $\X_F = \{ x \subset S^\I \;|\; \forall w \in F : w \not\sqsubset x \}$. If $F$ is finite, then $\X_F$ is \emph{of finite type} (SFT for short), if $F$ is a regular language, then $\X_F$ is \emph{sofic}, and if $\B(X)$ is (co-)recursively enumerable, then $X$ is \emph{recursively enumerable} (effective, respectively). A sofic shift which is not an SFT is called \emph{proper sofic}. Once a finite set of forbidden patterns is chosen, the length of the longest pattern is called the \emph{window size} of the corresponding SFT. For a two-directional subshift $X \subset S^\Z$, we denote $X_\N = \{ x_{[0, \infty)} \;|\; x \in X \} \subset S^\N$.

We say that a subshift $X$ is \emph{transitive} if for all $u,w \in \B(X)$ there exists $v \in \B(X)$ such that $uvw \in \B(X)$, and \emph{mixing} if the length of $v$ can be chosen arbitrarily, as long as it is sufficiently large (depending on $u$ and $w$). It is known that for mixing SFTs and sofic shifts, the length of $v$ can be chosen independently of $u$ and $w$, and the smallest such length is called the \emph{mixing distance of $X$}. A subshift $X$ is \emph{minimal} if it contains no proper nonempty subshift. The \emph{entropy} of a subshift $X$ is defined as $h(X) = \lim_{n \rightarrow \infty} \frac{1}{n} \log_2 |\B_n(X)|$.

An \emph{alternating finite automaton} is a quintuple $M = (Q,\Sigma,q_0,F,\delta)$, where $Q$ is the finite set of \emph{states}, $\Sigma$ the finite \emph{input alphabet}, $q_0 \in Q$ the \emph{initial state}, $F \subset Q$ the \emph{final states} and $\delta : (Q \times \Sigma) \to 2^{2^Q}$ the \emph{transition function}. For a word $w \in \Sigma^*$ and $q \in Q$, we say that $q(w)$ \emph{holds} if either
\begin{itemize}
	\item $w = \lambda$ and $q \in F$, or
	\item $w \neq \lambda$ and $\{ r \in Q \;|\; r(w_{[1,|w|-1]}) \mbox{ holds} \} \in \delta(q,w_0)$,
\end{itemize}
where $\lambda$ denotes the empty word. We say $w$ is \emph{accepted} by $M$ if $q_0(w)$ holds. Intuitively, the automaton processes the input word by removing its first letter, letting its every state recursively process the suffix, and then combining the results according to a Boolean function. \emph{Deterministic finite automata} can be seen as alternating automata where $\delta(q, c)$ is either empty or of the form $\{ P \subset Q \;|\; r \in P \}$ for all $q \in Q, c \in \Sigma$, or in other words, $q(w)$ holds if and only if $r(w_{[1, |w|-1]})$ does. In this case, we may see $\delta$ as having the type $\Delta \to Q$ for some $\Delta \subset Q \times \Sigma$, and write $\delta(q, c) = r$ for all $(q, c) \in \Delta$ as above. Note that this definition of DFAs is somewhat nonstandard, as the transition function may not be total. Alternating automata were first defined in \cite{ChKoSt81}, and there it was proved that they accept exactly the regular languages.

\subsection{Word Games}

We now define games in which two players take turns to pick letters from an alphabet and build a finite or infinite word. A \emph{word game} is a triple $(S,n,X)$, where $S$ is a finite set, $n \in \N \cup \{\N\}$ and $X \subset S^n$ is the \emph{target set}. If $X \subset S^*$, we may use $X$ in place of $X \cap S^n$, but this is always clear from the context. An \emph{ordered word game} is a tuple $(S,n,X,a)$, where $(S,n,X)$ is a word game and $a \in \{A,B\}^n$ is the \emph{turn order}. If $n = \N$ and $X$ is a subshift, we have \emph{subshift games} and \emph{ordered subshift games}, respectively. An ordered word game $(S,n,X,a)$ should be understood as the players $A$ and $B$ building a word in $S^n$ by choosing one coordinate at a time, with the coordinate $i$ being chosen by $a_i$. If the resulting word lies in $X$, then $A$ wins, and otherwise $B$ does.

Let $G = (S,n,X,a)$ be an ordered word game. A \emph{strategy for $G$} is a function $s : S^{\leq n-1} \to S$ that specifies the next pick of a player, given the word constructed thus far. The \emph{play} of a pair $(s_A,s_B)$ of strategies for $G$ is the sequence $x = p(G,s_A,s_B) \in S^n$ defined inductively by $x_i = s_{a_i}(x_{[0,i-1]})$. We say that a strategy $s$ is \emph{winning for $A$} if $p(G,s,s_B) \in X$ for all strategies $s_B$ ($A$ wins the game no matter how $B$ plays), and \emph{winning for $B$} if $p(G,s_A,s) \notin X$ for all strategies $s_A$. If $n \in \N$, or if $X$ is a closed set in the product topology of $S^\N$ (in particular if it is a subshift), then a winning strategy always exists for one of the players (the game is \emph{determined}) \cite{GaSt53}. As a side note, the game is determined even if $X$ is a general Borel set \cite{Ma75}.

\begin{example}
\label{ex:FiniteGame}
Let $S = \{0, 1\}$, $n = 3$ and $X = \{000, 110, 111\}$. Then $(S, n, X)$ is a word game, and $(S, n, X, BAB)$ is an ordered word game. In this game, $B$ has a winning strategy $s_B$ defined by
\[ s_B(\lambda) = 0, s_B(00) = s_B(01) = 1, \]
the other inputs being irrelevant. With this strategy for $B$, the ordered game starts with $B$ picking the letter $0$. Next, $A$ can pick either $0$ or $1$, and $B$ finishes with $1$. The possible outcomes are $001$ and $011$, neither of which lies in the target set $X$.
\end{example}

Given a set $X \subset S^n$, where $n \in \N \cup \{\N\}$, we define the \emph{winning set} of $X$ as
\[ W(X) = \{ a \in \{A,B\}^n \;|\; A \mbox{ has a winning strategy for } (S,n,X,a)\}. \]
The alphabet $S$ will always be clear from the context. For a language $X \subset S^*$, we denote $W(X) = \bigcup_{n \in \N} W(X \cap S^n)$. If $n = \N$ and $X$ is a subshift, $W(X)$ is called the \emph{winning shift} of $X$. We endow the alphabet $\{A, B\}$ with the order $A < B$, and it is clear that $W(X)$ is always downward closed with respect to the coordinatewise partial ordering defined as
\[ v \leq w \Leftrightarrow |v| = |w| \wedge \forall i \in [0, |v|-1] : v_i \leq w_i. \]

\begin{example}
\label{ex:FiniteWinningSet}
We continue with the word game of Example~\ref{ex:FiniteGame}. We already know that $BAB \notin W(X)$, and thus $BBB \notin W(X)$. Since $X$ is nonempty, we trivially have $AAA \in W(X)$. By a simple case analysis, we see that $W(X) = \{ AAA, AAB, BAA \}$.
\end{example}

In a sense, the notion of winning shifts generalizes one of the problem families studied in \cite{Pe11}: Given a periodic sequence $a \in \{A, B\}^\N$ and a natural parametrized class of subshifts $(X_i)_{i \in \mathcal{I}}$, for which parameters $i \in \mathcal{I}$ do we have $a \in W(X_i)$? To showcase our formalism, we state the two results of \cite{Pe11} which are of this form. Here, the alphabet of the games is $\{0, 1\}$.

\begin{theorem}[Theorem~1.4 of \cite{Pe11}]
For all $\epsilon > 0$ and $t \in \N$, we have
\[ (AB^t)^\infty \in W \left(\mathcal{X} \left( \Set{wuw \;|\; |w| > N_{\epsilon,t}, |u| \leq (2 - \epsilon)^{\frac{|w|}{t+1}}} \right) \right), \]
for large enough $N_{\epsilon,t} \in \N$.
\end{theorem}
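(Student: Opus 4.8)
The plan is to invoke the determinacy of subshift games (stated above via \cite{GaSt53}) and prove the contrapositive: I will show that player $B$ has \emph{no} winning strategy, so that $A$ does. Write $X = \mathcal{X}(\ldots)$ for the target subshift and $\ell = |w|$. With the turn order $a = (AB^t)^\infty$, player $A$ controls exactly the positions $P = (t+1)\N$. The key structural observation is that any forbidden factor $wuw$ places its two copies of $w$ at distance $d = \ell + |u| \geq \ell$, so the first occurrence (window~$1$) lies entirely to the left of the second (window~$2$). Hence, when $A$ fills a position $q \in P$ lying inside window~$2$, the corresponding letter of window~$1$ has already been played and its target value is known to her. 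Player $A$'s strategy is therefore to use her letters inside each window~$2$ to force a mismatch against the known window~$1$, destroying the repetition.

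The heart of the argument is a counting estimate powered by the entropy gap between $2$ and $2-\epsilon$. Setting $m = \lfloor \ell/(t+1)\rfloor$, a window of length $\ell$ meets $P$ in $m$ positions (up to $\pm 1$), so $A$'s letters inside a window range over $2^{m}$ signatures, whereas the number of admissible gaps is only $|u| \leq (2-\epsilon)^{\ell/(t+1)}$, giving at most $(t+1)\big((2-\epsilon)^{\ell/(t+1)}+1\big)$ threatening distances $d$. For $\ell > N_{\epsilon,t}$ with $N_{\epsilon,t}$ large, $2^{m}$ dominates this count, leaving $A$ a surplus of signatures. I would split on $d \bmod (t+1)$. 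When $d \equiv 0$, the $P$-positions of the two windows are aligned, the matching condition on $A$'s letters becomes a near-square inside her own subsequence $\alpha = (x_{k(t+1)})_k$, and $B$ is irrelevant; it then suffices that $A$ keep $\alpha$ free of factors $w'u'w'$ with $|w'| = m$ and $|u'| \leq (2-\epsilon)^{|w'|}$, which is exactly the adversary-free (offline) instance of the same avoidance problem and follows from the identical entropy gap. When $d \not\equiv 0$, the $P$-positions of window~$2$ fall on non-$A$-positions of window~$1$, so $A$ has $m$ candidate breakpoints inside window~$2$, each with an already-fixed target, and she needs only a single mismatch, which the signature surplus supplies.

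The main obstacle is the \emph{online, adversarial} bookkeeping: one position $q \in P$ simultaneously serves as a candidate breakpoint for repetitions of many lengths $\ell$ and distances $d$, and over a binary alphabet two demands $x_q \neq x_{q-d_1}$ and $x_q \neq x_{q-d_2}$ with $x_{q-d_1}\neq x_{q-d_2}$ would strand her. Ruling this out requires more than choosing one window signature at a time; I would maintain an inductive invariant bounding, at each move, the number of still-alive repetitions for which $q$ is responsible, the count above keeping this below the $2^{m}$ budget. The cleanest way to convert the surplus into a genuine strategy is an entropy-compression (Moser--Tardos style) argument: were $A$ ever to have no safe letter, the resulting conflict record could be injectively encoded in fewer bits than it consumes, the gap between $2^{m}$ and $(2-\epsilon)^{m}$ making the encoding strictly lossy, a contradiction. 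The remaining work is routine calibration of $N_{\epsilon,t}$, large enough to absorb the factor $t+1$ from the alignment classes, the $\lceil\cdot\rceil$-versus-$\lfloor\cdot\rfloor$ boundary fluctuation in the count of $P$-positions, and the additive ``$+1$'' from the range of $|u|$.
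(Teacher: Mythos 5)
First, note that the paper itself does not prove this statement: it is quoted verbatim from \cite{Pe11} as a showcase of the formalism, and Pegden's original proof constructs $A$'s strategy from a game-adapted, left-to-right (``lefthanded'') version of the Lov\'asz Local Lemma. Your setup is sound as far as it goes -- the windows of a threat $wuw$ are disjoint since $d = |w|+|u| \geq |w|$, so $A$'s letters in the second window face already-fixed targets, and the per-length surplus is real: a threat of window length $\ell$ gives $A$ about $m = \lfloor \ell/(t+1) \rfloor$ chances, against only about $(2-\epsilon)^m$ admissible gaps, and even the sum over all $\ell > N_{\epsilon,t}$ of $\left(\frac{2-\epsilon}{2}\right)^{\ell/(t+1)}$ converges. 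But this counting only bounds an \emph{expectation} under random play; the theorem requires every threat to die against an adaptive $B$ over an infinite game, and your conversion of the surplus into an actual strategy is deferred entirely to an entropy-compression argument that, as described, cannot be run. Moser--Tardos-style compression derives its contradiction from \emph{resampling}: when a bad event (here, a completed repetition) occurs, the algorithm erases or re-randomizes the letters involved and logs a short record, and the lossiness of that log is the contradiction. In the ordered game, letters once played -- by either player -- are irrevocable, so there is no resampling step to encode. Moreover, the failure event you propose to compress (``$A$ has no safe letter at $q$'') is not the right one: $A$ never needs a single letter safe against all live threats simultaneously; she needs one mismatch per threat somewhere in its window, a global condition, and a completed repetition is only detected when its last letter (possibly $B$'s) lands. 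This online-irrevocability obstruction is precisely why \cite{Pe11} had to develop a game version of the Local Lemma rather than invoke standard probabilistic machinery; entropy compression has been made to work for Thue-type games in the literature only under modified rules (e.g., erasure of completed repetitions), which are not available here.

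A second, compounding gap is that your case split on $d \bmod (t+1)$ does not compose into a single strategy. For $d \equiv 0 \pmod{t+1}$ you ask that $A$'s subsequence $\alpha = (x_{k(t+1)})_k$ realize a fixed offline word avoiding the induced repetitions $w'u'w'$; for $d \not\equiv 0 \pmod{t+1}$ you ask that the very same letters be chosen reactively to mismatch $B$'s already-played letters. Nothing ensures these demands are jointly satisfiable over a binary alphabet -- a reactive choice at $q$ forced by a non-aligned threat may be exactly the letter that completes an aligned near-square in $\alpha$ -- so the stranding problem you correctly flagged within a single position reappears between your two cases, and the ``inductive invariant'' that is supposed to keep the live-threat count below the $2^m$ budget is never formulated, let alone proved to be maintainable against an adversary who chooses which of the overlapping threats of varying $\ell$ and $d$ to pursue. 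Finally, the opening appeal to determinacy via \cite{GaSt53} does no work: ruling out a winning strategy for $B$ by counting still requires exhibiting responses for $A$ against every $B$-strategy, which is the same task as constructing her strategy directly.
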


\begin{theorem}[Theorem~1.5 of \cite{Pe11}]
For all $\epsilon > 0$ and $t \in \N$, we have
\[ (AB^t)^\infty \in W \left(\mathcal{X} \left(\Set{uv \;|\; |u| = |v| > N_{\epsilon,t}, H(u, v) < ((2t + 2)^{-1} - \epsilon)|u|} \right) \right), \]
for large enough $N_{\epsilon,t} \in \N$, where $H(u,v)$ denotes the Hamming distance of $u$ and $v$, that is, the number of coordinates in which they differ.
\end{theorem}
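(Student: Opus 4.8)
The plan is to reduce the infinite game to its finite truncations and then to defeat every strategy of $B$ in each finite game by a randomized argument for $A$. Put $c = (2t+2)^{-1} - \epsilon$ and let $F = \{uv \mid |u| = |v| > N_{\epsilon,t},\ H(u,v) < c|u|\}$, so the target is $\X_F$. First I would record the safety-game reduction: if $A$ wins the length-$n$ game for every $n$, then $A$ wins the infinite game. Indeed, a winning strategy for the length-$(n+1)$ game restricts to a winning strategy for the length-$n$ game (a forbidden factor in a length-$n$ prefix would already be a forbidden factor of the longer word), so the finite sets $D_n$ of $A$-winning strategies form an inverse system of nonempty finite sets; its inverse limit is nonempty by K\"onig's lemma and yields an infinite winning strategy. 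Since each finite game is determined, it then suffices to show that $B$ has no winning strategy in the length-$n$ game, i.e.\ that for every strategy $\tau$ of $B$ there is a single play of $A$ defeating it.

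Fix $n$ and a possibly adaptive strategy $\tau$ for $B$, and let $A$ play an independent uniformly random bit at each of her coordinates $0, t+1, 2(t+1), \ldots$. The core is a per-pattern estimate. For $0 \le i$ and $\ell > N_{\epsilon,t}$ with $i + 2\ell \le n$, let $E_{i,\ell}$ be the event that a forbidden word occupies $[i, i+2\ell)$, with halves $u = x_{[i,i+\ell-1]}$ and $v = x_{[i+\ell, i+2\ell-1]}$. The coordinates of $A$ inside the second half $v$ number $m \ge \frac{\ell}{t+1} - 1$; list them as $j_1 < \cdots < j_m$. For each $s$, the partner coordinate $j_s - \ell$ lies in $u$ and is played strictly before $j_s$, hence its value is determined by the history preceding coordinate $j_s$ and is independent of the fresh uniform bit $x_{j_s}$. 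Writing $Z_s$ for the indicator of the mismatch $x_{j_s} \neq x_{j_s - \ell}$, we therefore get $\Pr[Z_s = 1 \mid \text{history before } j_s] = \frac{1}{2}$ no matter what $\tau$ does; this is exactly where the adaptivity of $B$ is neutralised. Since $H(u,v) \ge \sum_s Z_s$ and the centred sums $\sum_s (Z_s - \frac12)$ form a bounded martingale with terminal mean $\frac{m}{2} \approx \frac{\ell}{2(t+1)}$, the Azuma--Hoeffding inequality gives
\[ \Pr[E_{i,\ell}] \le \Pr\!\left[\textstyle\sum_s Z_s < c\ell\right] \le \exp\!\left(-2\epsilon^2(t+1)\,\ell\,(1 - o(1))\right) \le 2^{-\beta\ell} \]
for some $\beta = \beta(\epsilon, t) > 0$ and all $\ell > N_{\epsilon,t}$. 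Observe that the threshold $c = \frac{1}{2(t+1)} - \epsilon$ is precisely the mean density $\frac{1}{2(t+1)}$ of forced mismatches minus the slack $\epsilon$ absorbed by the concentration bound, which is what pins down the constant $(2t+2)^{-1}$ in the statement.

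With these exponentially small probabilities, I would conclude with the Lov\'asz Local Lemma applied to the bad events $E_{i,\ell}$. I would declare two patterns dependent only when their windows $[i,i+2\ell)$ and $[i', i'+2\ell')$ overlap; the number of patterns with a fixed length parameter $\ell'$ meeting a window of size $2\ell$ is $O(\ell + \ell')$, so the exponential decay $2^{-\beta\ell}$ easily dominates the polynomial growth of the neighbourhoods, and the asymmetric local lemma hypotheses hold once $N_{\epsilon,t}$ is large. This gives a positive probability that no $E_{i,\ell}$ occurs, hence a concrete play of $A$ beating $\tau$, completing the finite-game step.

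The main obstacle is this last step rather than the probability estimate. Because $B$ plays adaptively, each $E_{i,\ell}$ depends, through $B$'s responses, on all of $A$'s bits played before coordinate $i + 2\ell$, not merely on those inside the window $[i, i+2\ell)$, so the naive variable-based local lemma does not apply. One must instead use a game (lopsided) form of the local lemma in which dependency is counted only between overlapping windows; this is legitimate exactly because each pattern's failure probability is governed solely by the fresh bits $A$ places in its own second half, independently of the past, as established in the estimate above. Setting up this adaptive local lemma and checking its numerical hypotheses against the neighbourhood counts is the delicate part of the argument.
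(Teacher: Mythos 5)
First, a framing point: the paper you were given does not prove this theorem at all. It quotes it verbatim as Theorem~1.5 of \cite{Pe11} purely to illustrate the formalism $a \in W(X)$, so the only meaningful comparison is with the proof in \cite{Pe11} itself --- and your outline is in essence a reconstruction of exactly that strategy. The pieces you do carry out are sound: the reduction to finite truncations via K\"onig's lemma matches the compactness argument the paper uses for $\B(W(X)) = W(\B(X))$; and the per-window estimate is correct, since for each of $A$'s coordinates $j_s$ in the second half the partner $x_{j_s-\ell}$ is already fixed when the fresh uniform bit $x_{j_s}$ is played, so $(Z_1,\dots,Z_m)$ is exactly uniform on $\{0,1\}^m$ with $m \geq \ell/(t+1) - 1$, and Hoeffding gives $\Pr[E_{i,\ell}] \leq 2^{-\beta \ell}$ with $\beta = \beta(\epsilon,t) > 0$; your observation that the mean mismatch density $\tfrac{1}{2(t+1)}$ pins down the constant $(2t+2)^{-1}$ is also right. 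A plain union bound genuinely fails here (the sum over positions $i$ grows linearly in $n$ for fixed $N_{\epsilon,t}$), so some local-lemma-type argument is indeed unavoidable.

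The genuine gap is precisely the step you flag at the end and then wave through: the ``game (lopsided) form of the local lemma'' is not a routine variant one can invoke --- it is the central technical contribution of \cite{Pe11} (the ``lefthanded'' local lemma), and your one-sentence justification for it is unsound as stated. In any LLL application one must bound $\Pr\bigl[E_{i,\ell} \mid \bigcap_j \overline{E_j}\bigr]$ where the conditioning ranges over arbitrary collections of non-neighbours, including events whose windows lie entirely to the \emph{right} of $[i, i+2\ell)$. Your estimate controls $E_{i,\ell}$ conditioned on the past only; but an avoidance event $\overline{E_{i',\ell'}}$ with a later, disjoint window is \emph{not} independent of the bits inside $[i, i+2\ell)$, because through $B$'s adaptive responses $E_{i',\ell'}$ depends on all of $A$'s earlier bits, so conditioning on it biases exactly the fresh bits your martingale estimate relies on. Hence the symmetric ``overlapping windows'' dependency graph is wrong for both the standard and the lopsided LLL, and your stated legitimation (``failure probability governed solely by the fresh bits in its own second half'') does not address conditioning on the future. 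Repairing this requires ordering the events (e.g.\ by right endpoint) and redoing the entire LLL induction so that one only ever conditions in an order-respecting way --- which is what Pegden proves, with hypotheses that must then be checked against your neighbourhood counts. Until that lemma is stated and proved, the finite-game step, and hence the whole argument, is incomplete.
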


We note that in terms of symbolic dynamics, $(AB^t)^\infty$ is a rather trivial object, while the subshifts on the right are quite complicated in that they are not sofic. Our take in this article is, simply put, to consider a simpler class of subshifts on the right side of the equation, but try to understand the left side more completely.

\section{Qualitative Results}

We begin by considering winning sets of languages, and then establish a connection between the winning sets of a subshift and its language. For an arbitrary language $L \subset S^*$, there might not be any meaningful relations between the finite sublanguages $L_n = L \cap S^n$ for $n \in \N$, so the same holds for the winning sets $W(L_n)$. Conversely, if the $L_n$ have more structure, we are sometimes able to `transfer' it to $W(L)$.

\begin{lemma}
\label{lem:FactorClosed}
Let $L \subset S^*$ be a factor-closed language. Then $W(L)$ is factor-closed.
\end{lemma}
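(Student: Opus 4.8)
The statement unwinds to a claim about factors of turn orders: if $a \in W(L)$ has length $n$ and $b = a_{[i,i+m-1]}$ is a factor of $a$, then $b \in W(L)$, that is, $A$ wins the ordered word game $(S, m, L \cap S^m, b)$. The plan is to take $A$'s winning strategy $s$ for the long game $(S, n, L \cap S^n, a)$ and convert it into a winning strategy $s'$ for the short game by having $A$ \emph{simulate} the long game internally while only genuinely playing on the coordinate window $[i,i+m-1]$.

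Concretely, I would have $A$ identify short-game coordinate $j$ with long-game coordinate $i+j$, and maintain a simulated long word $y \in S^{\leq n}$. On the phantom prefix $[0, i-1]$ and suffix $[i+m, n-1]$, $A$ fills $y$ by applying $s$ at the $A$-positions and choosing an arbitrary fixed letter $c_0 \in S$ at the $B$-positions; note that the prefix is thereby determined already before the short game begins. Inside the window, when $b_j = A$ the strategy $s'$ outputs $s(y_{[0, i+j-1]})$ — a legal call, since $i + j \leq i + m - 1 \leq n - 1$ puts the argument inside the domain $S^{\leq n-1}$ of $s$ — and when $b_j = B$, $A$ simply records the opponent's actual move in $y_{i+j}$.

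With this definition, any play of the short game in which $A$ follows $s'$ agrees, on the window, with a full length-$n$ play $y = p((S,n,L \cap S^n,a), s, s_B)$ for the $B$-strategy $s_B$ that plays $c_0$ on the phantom $B$-positions and mirrors the short-game opponent on the window $B$-positions. Since $s$ is winning, $y \in L$, and because $L$ is factor-closed, $y_{[i,i+m-1]} \in L$; but this factor is exactly the outcome of the short game, so $s'$ is winning and $b \in W(L)$. The empty factor is handled separately: $a \in W(L)$ forces $L \neq \emptyset$, whence $\lambda \in L$ and $\lambda \in W(L)$.

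The routine part is the index bookkeeping. The one conceptual point, which is where a careless argument could break down, is that $A$ may fabricate the phantom $B$-moves freely (and the window $B$-moves are genuinely adversarial) precisely because a winning strategy for $A$ defeats \emph{every} play of $B$, so the resulting long word lands in $L$ regardless of these choices. Factor-closedness of $L$ is then invoked exactly once, to pass from the long word to its window factor.
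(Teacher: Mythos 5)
Your proof is correct and uses essentially the same mechanism as the paper's: since a winning strategy for $A$ defeats \emph{every} $B$-play, phantom $B$-moves can be fabricated to embed the short game into the long one, and factor-closedness of $L$ transfers membership from the long play to its window. The only difference is presentational: the paper peels one letter at a time from either end of $a$ (restricting $s$ for the last letter, using your phantom-first-move trick via $v \mapsto s(s(\lambda)v)$ for the first), whereas you handle an arbitrary factor $a_{[i,i+m-1]}$ in a single simulation.
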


\begin{proof}
Let $\lambda \neq a \in W(L)$, denote $n = |a|$ and let $s : S^{\leq n-1} \to S$ be a winning strategy for $A$ in the ordered game $(S, n, L, a)$. Then, since $L$ is factor-closed, the restriction of $s$ to $S^{\leq n-2}$ is clearly a winning strategy for $A$ in the ordered game $(S, n-1, L, a_{[0, n-2]})$. On the other hand, the strategy $v \mapsto s(s(\lambda)v)$ is winning for $A$ in the ordered game $(S, n-1, L, a_{[1, n-1]})$: Here, $A$ plays on the word $v$ as if there was an extra symbol in front of it, chosen according to the strategy $s$ if $a_0 = A$. Thus both $a_{[0, n-2]}$ and $a_{[1, n-1]}$ are in $W(L)$, which implies that $W(L)$ is factor-closed.
\end{proof}

\begin{lemma}
\label{lem:RightExtendable}
Let $L \subset S^*$ be a right extendable language. Then $W(L)$ is right extendable.
\end{lemma}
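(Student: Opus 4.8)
The plan is to mirror the proof of Lemma~\ref{lem:FactorClosed}, but to use only the extension direction. Given a turn order $a \in W(L)$ of length $n$, I would show that the extended order $aA$, obtained by appending one turn for $A$, again lies in $W(L)$; this immediately gives right extendability of $W(L)$.

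First I would fix a winning strategy $s : S^{\leq n-1} \to S$ for $A$ in the ordered game $(S, n, L \cap S^n, a)$. I would then define a strategy $s' : S^{\leq n} \to S$ for the longer game $(S, n+1, L \cap S^{n+1}, aA)$ as follows. On inputs of length at most $n-1$, let $s'$ agree with $s$, so that during the first $n$ turns the two games proceed identically. On an input $w \in S^n$ the letter $s'(w)$ is chosen using right extendability: if $w \in L$, pick some $c \in S$ with $wc \in L$ (which exists by assumption) and set $s'(w) = c$, while for $w \notin L$ the value is irrelevant and may be fixed arbitrarily.

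Next I would verify that $s'$ is winning for $A$. Against any strategy of $B$, the first $n$ coordinates of the resulting play coincide with a play of the length-$n$ game under $s$, so by the choice of $s$ the prefix $w = x_{[0, n-1]}$ lies in $L \cap S^n \subset L$. Since the final coordinate is chosen by $A$ (as the appended symbol is $A$), the play ends with $x = w\, s'(w) = wc$, and by construction $wc \in L \cap S^{n+1}$. Hence $s'$ is winning, so $aA \in W(L \cap S^{n+1}) \subset W(L)$. The degenerate case $a = \lambda$ is covered by the same reasoning: then $\lambda \in L$, right extendability yields some $c \in L \cap S^1$, and $A$ wins the length-$1$ game by playing $c$, so $A \in W(L)$.

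I do not expect a genuine obstacle here; the only point requiring care is the \emph{direction} of the extension. Unlike the factor-closed case, right extendability is not symmetric, so I must append a turn for $A$ rather than for $B$: it is $A$ who needs the freedom to complete the already-constructed word into $L$, and appending a $B$-turn would instead hand that freedom to the adversary. The essential step is therefore observing that $A$'s winning strategy guarantees $w \in L$ before the final move, which is exactly what lets right extendability supply a winning completion.
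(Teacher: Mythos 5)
Your proof is correct and follows essentially the same route as the paper's: extend $A$'s winning strategy $s$ to the game on $aA$ by letting $A$, on the final turn, play a letter $c$ with $wc \in L$, which exists by right extendability since the prefix $w$ built under $s$ lies in $L$. Your explicit treatment of $w \notin L$ (value irrelevant) and of the degenerate case $a = \lambda$ are minor points the paper glosses over, but the argument is the same.
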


\begin{proof}
Let $\lambda \neq a \in W(L)$, denote $n = |a|$ and let $s : S^{\leq n-1} \to S$ be a winning strategy for $A$ in the ordered game $(S, n, L, a)$. Then, the strategy
\[ v \mapsto \left\{
	\begin{array}{ll}
		s(v), & \mbox{if $|v| \leq n-1$} \\
		c \in S, & \mbox{if $|v| = n$ and $vc \in L$}
	\end{array}
\right. \]
is winning for $A$ in the game $(S, n+1, L, aA)$. Note that the symbol $c$ in the above definition exists, since $L$ is right extendable. Thus $aA \in W(L)$, and $W(L)$ is right extendable.
\end{proof}

\begin{example}
\label{ex:NotLeftExtendable}
There is a certain asymmetry in the definition of $W$ which manifests itself in the fact that the winning set of a left extendable language may not be left extendable. A concrete example is $L = 0^+ + 1^+$, whose winning set is easily seen to be $(B + A)A^*$.
\end{example}

As stated in the introduction, the winning shift $W(X)$ of a subshift $X$ is a subshift itself, and we now prove this fact.

\begin{proposition}
\label{prop:WDownwardClosed}
Let $X \subset S^\N$ be a subshift. Then $W(X)$ is a subshift, and we have $\B(W(X)) = W(\B(X))$.
\end{proposition}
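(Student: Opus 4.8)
The plan is to prove the two assertions in the proposition more or less simultaneously, since they reinforce each other. First I would establish the language identity $\B(W(X)) = W(\B(X))$, and then deduce from it that $W(X)$ is a subshift by invoking the earlier structural lemmas together with the standard fact that a factor-closed, extendable language defines a subshift whose language it is.

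For the identity $\B(W(X)) = W(\B(X))$, I would argue by double inclusion, and the central tool is a \emph{compactness/prefix} argument relating finite games to the infinite game. For the inclusion $\B(W(X)) \subseteq W(\B(X))$: if $a \in \B(W(X))$, then some $a' \in W(X)$ has $a$ as a factor, say $a = a'_{[i,i+n-1]}$; I want to show $a \in W(\B(X) \cap S^n)$. The idea is to take a winning strategy $s$ for $A$ in the infinite ordered game $(S,\N,X,a')$ and restrict it to produce a winning strategy in the finite game of length $n$, shifting by $i$ to handle the offset exactly as in the proofs of Lemma~\ref{lem:FactorClosed}. The point is that any finite word produced under $s$ extends to a configuration in $X$, hence lies in $\B(X)$, so the length-$n$ outcome lies in $\B_n(X)$. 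For the reverse inclusion $W(\B(X)) \subseteq \B(W(X))$, the real content is a \emph{strategy-stitching} argument: given $a \in W(\B(X) \cap S^n)$, I must build a turn order $a' \in W(X)$ extending $a$ and show $A$ wins the infinite game. Here I would use right extendability of $\B(X)$ (Lemma~\ref{lem:RightExtendable}) to keep appending $A$-turns, at each stage keeping the constructed word inside $\B(X)$, and assemble the finite winning strategies into a single strategy for the infinite game by a compactness (König's lemma / limit of strategy trees) argument. The downward closure of $W$ noted before the proposition guarantees that once we find any winning extension the factor $a$ itself is captured.

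Once the identity is in hand, showing $W(X)$ is a subshift reduces to verifying that $W(\B(X))$ is a factor-closed, right-extendable language, because then $\B^{-1}(W(\B(X)))$ is a well-defined subshift whose language is exactly $W(\B(X)) = \B(W(X))$, forcing $W(X)$ to coincide with it. Factor-closedness and right-extendability of $W(\B(X))$ follow immediately from Lemmas~\ref{lem:FactorClosed} and~\ref{lem:RightExtendable}, since $\B(X)$ is factor-closed and right-extendable. The only genuinely new thing to check is that $W(X)$, defined directly via infinite games, is a \emph{closed} shift-invariant set and that its language is the claimed object; shift-invariance is essentially the $a_{[1,n-1]}$ half of Lemma~\ref{lem:FactorClosed} applied in the infinite setting, and closedness follows once we know the language of $W(X)$ is $W(\B(X))$.

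The main obstacle I expect is the reverse inclusion $W(\B(X)) \subseteq \B(W(X))$, specifically the passage from winning the finite games of every length to winning the single infinite game. A strategy that wins the length-$n$ finite game for each $n$ need not glue into a coherent infinite strategy, because different lengths may force incompatible choices on the same prefix. I would handle this by arranging the finite winning strategies into a finitely-branching tree (the alphabet $S$ is finite) and applying a compactness argument to extract a single strategy that is simultaneously winning at every finite stage; since $X$ is closed, an infinite play all of whose finite prefixes lie in $\B(X)$ automatically lies in $X$, which closes the gap between finite wins and the infinite win.
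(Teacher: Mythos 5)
Your proposal is correct and follows essentially the same route as the paper, whose proof consists precisely of your key claim --- $a \in W(X)$ if and only if every prefix $a_{[0,n-1]}$ lies in $W(\B(X))$ --- established by the same diagonal/K\"onig extraction of a limit strategy from the finite winning strategies together with closedness of $X$, after which Lemmas~\ref{lem:FactorClosed} and~\ref{lem:RightExtendable} supply factor-closedness and right extendability of $W(\B(X))$. One bookkeeping slip worth noting: you attach the compactness argument to the finite-word inclusion $W(\B(X)) \subseteq \B(W(X))$, where it is unnecessary (playing the finite winning strategy on $a$ and then right-extending letter by letter wins the game $aA\INF$ outright, since the tail contains no $B$-turns), whereas it is genuinely needed exactly at the step you dismiss with ``closedness follows once we know the language'' --- namely, to show that an infinite turn order with possibly infinitely many $B$'s, all of whose prefixes lie in $W(\B(X))$, is itself in $W(X)$; since your final paragraph proves precisely this statement, the argument as a whole is complete and matches the paper's.
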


\begin{proof}
Let $a \in \{A, B\}^\N$ be arbitrary. We claim that $a \in W(X)$ if and only if $a_{[0, n-1]} \in W(\B(X))$ for all $n \in \N$. Suppose first that $a_{[0, n-1]} \notin W(\B(X))$ for some $n \in \N$, so that $B$ has a winning strategy $s$ in the game $(S, n, \B(X), a_{[0, n-1]})$. But then any strategy $s'$ for the game $(S, \N, X, a)$ that satisfies $s'|_{S^{\leq n-1}} = s$ is winning for $B$, since a forbidden pattern is introduced after the first $n$ turns. Thus $a \notin W(X)$.

Suppose then that $a_{[0, n-1]} \in W(\B(X))$ for all $n \in \N$, and let $s_n$ be a winning strategy for $A$ in the game $(S, n, \B(X), a_{[0, n-1]})$. We use a standard compactness argument to construct a winning strategy $s$ for $(S, \N, X, a)$. First, there exists a symbol $c_0 \in S$ such that $s_n(\lambda) = c_0$ for infinitely many $n$; let $(s^0_n)_{n \in \N}$ be the subsequence of such strategies, and set $s(\lambda) = c_0$. Then, there exists a function $f_1 : S \to S$ such that for infinitely many $n$, we have $s^0_n(c) = f_1(c)$ for all $c \in S$; let $(s^1_n)_{n \in \N}$ be the subsequence of such strategies, and set $s|_S = f_1$. Inductively we define a function $f_i : S^i \to S$, a subsequence $(s^i_n)_{n \in \N}$ for which $s^i_n(v) = f(v)$ holds for all $v \in S^i$, and set $s|_{S^i} = f_i$, for all $i \in \N$.

We then claim that $s$ is winning for $A$ in the game $G = (S, \N, X, a)$. If not, there exists a strategy $s_B$ for $B$ such that the play $p(G, s, s_B) \notin X$. Since $X$ is a subshift, we actually have $p(G, s, s_B)_{[0, n-1]} \notin \B(X)$ for some $n \in \N$. By the definition of $s$, there exists $m \geq n$ such that $s_m$ agrees with $s$ on $S^{\leq n-1}$. But then the restriction of $s_B$ to $S^{\leq m-1}$ as a strategy of $B$ would beat $s_m$ in the game $(S, m, \B(X), a_{[0, m-1]})$, a contradiction with the definition of $s_m$. This proves the above claim.

Now it is clear from Lemma~\ref{lem:FactorClosed} and Lemma~\ref{lem:RightExtendable} that $W(X)$ is a subshift defined by the factor-closed, right extendable language $W(\B(X))$.
\end{proof}

Inspired by this result and the fact that winning sets of left extendable languages may not be left extendable, we make the following definition for two-directional subshifts.

\begin{definition}
Let $X \subset S^\Z$ be a two-directional subshift. We define $W(X) \subset \{A, B\}^\Z$ as the two-directional subshift $\X_F$, where $F = \{A, B\}^* - W(\B(X))$.
\end{definition}

It is easy to see that the winning shift $W(X)$ of a two-directional subshift $X \subset S^\Z$ is exactly the set of configurations $a \in \{A, B\}^\Z$ such that $a_{[i, \infty)} \in W(X_\N) \subset \{A, B\}^\N$ for all $i \in \Z$. Note also that while $W(X)$ is downward closed, it is usually not an ideal, that is, a downward closed subshift which is also closed under coordinatewise maxima of two configurations. Namely, the only binary two-directional subshifts which are ideals are $\{\INF A \INF\}$ and $\{A,B\}^\Z$. However, the set of downward closed binary subshifts is very rich in structure. We now present three examples of winning shifts exhibiting some interesting properties.

\begin{example}
\label{ex:WinningShifts}
In this example, all subshifts are two-directional unless otherwise noted.

Let $X = \B^{-1}(0^*1^*)$, a binary SFT defined by the single forbidden pattern $10$. Its winning shift is $W(X) = \B^{-1}(A^*BA^*)$: In a one-directional game $(S, \N, \B^{-1}(0^*1^*), a)$, it is clear that $B$ should always first play $1$ and then $0$, and that this strategy is winning if and only if $B$ occurs at least twice in $a$. Here, $W(X)$ is nontransitive and proper sofic.

Let then $Y = \B^{-1}(0^*(10^*20^*)^*)$, a mixing proper sofic shift. Its winning shift is also $W(Y) = \B^{-1}(A^*BA^*)$: If $B$ has only a single turn, $A$ can win by always playing the letter $0$. However, if $B$ gets to play twice, he wins by the following strategy: On his first turn, $B$ plays $1$, and on his second turn, he plays whichever of $1$ and $2$ was played last.

Finally, let $Z = \B^{-1}((01 + 0001)^*)$, a transitive SFT. We claim that the winning shift of $Z$ is $Z' = \B^{-1}((AB(AA)^+)^*)$. For that, let $a \in Z'$, and suppose without loss of generality that the $B$'s in $a$ occur at even coordinates. Then $A$ wins the game defined by $a_{[i,\infty)}$ by playing $0$ in odd and $1$ in even coordinates. Conversely, let $a \notin Z'$. Then either $B$ appears in $a$ in coordinates of both parity, in which case he wins by always playing $1$, or $a_i = a_{i+2} = B$ for some $i \in \Z$, so that $B$ wins by playing $00$ or $11$, depending on the situation. In this case, $W(Z)$ is transitive and proper sofic. We finally note that in the one-directional case, we would have $BABA \INF \in W(Z)$, even though $BABA$ was a forbidden pattern in the two-directional case. Compare this to Example~\ref{ex:NotLeftExtendable}.
\end{example}

Later, Corollary~\ref{cor:Soficity} and Proposition~\ref{prop:Mixing} will show that the winning shift of a transitive SFT must be a transitive sofic shift, so in that sense our third example is `maximally complicated.'

\begin{definition}
We denote by $\W$ the class of winning shifts of all subshifts (one- or two-directional, depending on the context).
\end{definition}

We now prove closure properties of some classes of subshifts under the operation $W$, and make some basic observations about $\W$. From now on, we will only rarely refer to strategies as concrete functions, and more often just informally state how a player behaves in certain situations.

\begin{proposition}
\label{prop:Regularity}
Let $X \subset S^*$ be a regular language. Then $W(X)$ is regular.
\end{proposition}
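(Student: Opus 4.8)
The plan is to show that if $X \subseteq S^*$ is a regular language, then $W(X) \subseteq \{A,B\}^*$ is regular, by building a finite automaton for $W(X)$ from a DFA for $X$. Let $M = (Q, S, q_0, F, \delta)$ be a DFA recognizing $X$. The key observation is that the winning-set membership of a turn order $a \in \{A,B\}^n$ can be decided by tracking, for each state $q \in Q$ of $M$, the value of a game predicate. Specifically, I would process the turn word $a$ from \emph{right to left}, maintaining at each step a Boolean-valued function $g : Q \to \{0,1\}$, where $g(q) = 1$ should mean ``if the game starts from state $q$ of $M$ and the remaining turn order is the suffix read so far, then $A$ has a winning strategy to drive $M$ into an accepting state.'' The states of the automaton for $W(X)$ are exactly these functions $g \in \{0,1\}^Q$, of which there are finitely many.

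First I would set up the base case: the empty suffix corresponds to the function $g_\lambda$ with $g_\lambda(q) = 1$ iff $q \in F$, reflecting that with no turns left, $A$ wins from state $q$ precisely when $q$ is already accepting. Then I would give the transition rule for prepending a turn letter. If the current function is $g$ and we prepend an $A$-turn, the new function $g'$ satisfies $g'(q) = 1$ iff there \emph{exists} a letter $c \in S$ with $\delta(q,c)$ defined and $g(\delta(q,c)) = 1$; this is the disjunction encoding that $A$, choosing the letter, needs one good continuation. Symmetrically, prepending a $B$-turn gives $g'(q) = 1$ iff for \emph{all} letters $c \in S$, $\delta(q,c)$ is defined and $g(\delta(q,c)) = 1$ — a conjunction, since $B$ chooses adversarially and any undefined transition (a forbidden move creating a non-factor) is an immediate loss for $A$. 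Because these rules depend only on $g$ and the prepended letter, they define a deterministic transition structure on the finite state set $\{0,1\}^Q$, read backwards.

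To turn this into a standard left-to-right automaton for $W(X)$, I would reverse the construction: the reverse of a regular language is regular, so it suffices to recognize $W(X)^R$. The right-to-left processing above is exactly a run of a DFA (on the reversed input) whose states are the functions $g$, whose initial state is $g_\lambda$, whose transitions are the $A$- and $B$-rules, and whose accepting states are those $g$ with $g(q_0) = 1$ — encoding that $A$ wins the full game started from the initial state $q_0$. Thus $W(X)^R$ is regular, hence so is $W(X)$. The correctness of this automaton follows by induction on the suffix length, with the inductive step being precisely the standard game-tree unfolding: an $A$-turn is an existential quantifier over the player's move and a $B$-turn a universal one, matching the min-max structure of the definition of a winning strategy.

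The main obstacle I anticipate is handling the \emph{partiality} of the transition function and the interaction with the definition $W(X) = \bigcup_n W(X \cap S^n)$. One must be careful that reaching a state from which some required transition is undefined correctly registers as a loss for the player who needs that transition: for $B$-turns this is automatic (a universally quantified conjunction over all $c$ is false if any $\delta(q,c)$ is missing), but one should confirm that an undefined $A$-move simply drops out of the existential disjunction rather than causing spurious acceptance. I would also verify that the length-uniform union poses no difficulty, since the automaton reads $a$ of any length $n$ and the predicate $g(q_0)$ after reading all of $a$ captures membership in $W(X \cap S^n)$ exactly, so the single automaton recognizes $W(X)$ across all lengths at once.
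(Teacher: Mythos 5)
Your proposal is correct and is essentially the paper's own argument: your exists/forall recursion on Boolean-valued functions $g : Q \to \{0,1\}$ is exactly the paper's alternating automaton $M'$ ($A$-transitions existential, $B$-transitions universal over the DFA's moves, with the same induction $W(L^M_q) = L^{M'}_q$), determinized on the reversed input via the subset construction of \cite{ChKoSt81} --- precisely the $2^{|Q|}$-state DFA for $W(X)^R$ that the paper itself describes in the remark following its proof and uses in Example~\ref{ex:EvenShift}. The only differences are cosmetic: you handle a partial transition function directly (an undefined move correctly counting against whichever player needs it), where the paper assumes $\delta$ total, and your explicit reverse determinization makes the argument self-contained instead of citing the fact that alternating automata accept exactly the regular languages.
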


\begin{proof}
Since $X$ is regular, there exists a deterministic finite automaton $M = (Q, S, q_0, F, \delta)$ accepting it, where $\delta$ is defined on all of $Q \times S$. For $q \in Q$, denote by $L^M_q$ the language of the automaton $(Q, S, q, F, \delta)$, so that $X = L^M_{q_0}$. Define the alternating finite automaton $M' = (Q, \{A, B\}, q_0, F, \delta')$ by the transition function 
\begin{itemize}
	\item $\delta'(q, A) = \{ P \subset Q \;|\; \exists c \in S : \delta(q, c) \in P \}$
	\item $\delta'(q, B) = \{ P \subset Q \;|\; \forall c \in S : \delta(q, c) \in P \}$
\end{itemize}

We claim that the language accepted by $M'$ is $W(X)$, and prove this by showing that $W(L^M_q) = L^{M'}_q$ holds for all $q \in Q$. For this, let $a \in \{A, B\}^*$. If $a = \lambda$, then clearly $a \in W(L^M_q)$ iff $a \in L^{M'}_q$ by the definition of $M'$. Suppose then that $a = Ab$, where $b \in \{A, B\}^*$. Using the induction hypothesis on $b$, we get
\[ a \in W(L^M_q) \Leftrightarrow \exists c \in S : b \in W(L^M_{\delta(q, c)}) \Leftrightarrow \exists c \in S : b \in L^{M'}_{\delta(q, c)} \Leftrightarrow a \in L^{M'}_q. \]
The same argument applies for the case $a = Bb$, with the existential quantifiers replaced by universal ones. We have now proved that $W(X)$ is recognized by an alternating automaton, and thus is regular.
\end{proof}

In practice, the language $W(X)$ may be difficult to describe succinctly. The automaton $M'$ in the above proof has $|Q|$ states, and there then exists a deterministic automaton accepting $W(X)$ with at most $2^{2^{|Q|}}$ states. Furthermore, the reverse language of $W(X)$, or the set $W(X)^R = \{ a_{|a|-1} \cdots a_1 a_0 \;|\; a \in W(X) \}$, can be recognized by a deterministic automaton with just $2^{|Q|}$ states \cite{ChKoSt81}. These bounds are optimal for general alternating automata, but we have not checked whether this is the case for our construction. We also note that the reversal operation and $W$ do not usually commute or respect each other, as shown by the following example.

\begin{example}
\label{ex:Reversals}
Define the finite language $X = \{ 0000, 0011 \}$. An easy computation shows that $W(X) = \{ AAAA, AABA \}$ and $W(X^R) = \{ AAAA, BAAA \}$, and thus the sets $W(X)$, $W(X^R)$, $W(X)^R$ and $W(X^R)^R$ are all distinct.
\end{example}

Proposition~\ref{prop:Regularity} has the following immediate corollary.

\begin{corollary}
\label{cor:Soficity}
Let $X \subset S^\I$ be a sofic shift. Then $W(X)$ is sofic.
\end{corollary}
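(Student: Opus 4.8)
The plan is to deduce the soficity of $W(X)$ for a sofic shift $X$ directly from Proposition~\ref{prop:Regularity}, using the characterization of sofic shifts as those subshifts whose language is regular. Recall that a subshift is sofic precisely when its set of forbidden words can be taken to be a regular language, or equivalently when its language $\B(X)$ is regular; this is a standard fact, and it is the version I would use here.

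\begin{proof}
Since $X$ is sofic, its language $\B(X)$ is a regular subset of $S^*$. By Proposition~\ref{prop:Regularity}, the set $W(\B(X))$ is then a regular language over the alphabet $\{A, B\}$. By Proposition~\ref{prop:WDownwardClosed}, in the one-directional case we have $\B(W(X)) = W(\B(X))$, so the language of the subshift $W(X)$ is regular, and hence $W(X)$ is sofic. In the two-directional case, $W(X)$ is by definition the two-directional subshift $\X_F$ with $F = \{A, B\}^* - W(\B(X))$; since $W(\B(X))$ is regular and regular languages are closed under complementation, $F$ is regular, so $W(X)$ is sofic as well.
\end{proof}

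The main point is simply that all the work has already been done in Proposition~\ref{prop:Regularity}: the step carrying genuine content is the alternating-automaton construction there, translating the quantifier alternation of the game ("$A$ picks some letter" versus "$B$ picks all letters") into the existential and universal branching of $\delta'$. Here there is no real obstacle, only the routine observation that the regularity of $\B(X)$ is exactly the defining feature of sofic shifts, and that Proposition~\ref{prop:WDownwardClosed} identifies $\B(W(X))$ with $W(\B(X))$ so that regularity transfers from language to subshift. The only point meriting a moment's care is the two-directional case, where one must note that soficity is preserved because the regular languages form a Boolean algebra, so complementing $W(\B(X))$ to obtain a set of forbidden words keeps us within the regular class.
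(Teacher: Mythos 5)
Your proof is correct and is exactly the deduction the paper intends: the paper states this as an ``immediate corollary'' of Proposition~\ref{prop:Regularity} without further argument, and you have simply spelled out the routine steps (regularity of $\B(X)$ for sofic $X$, the identity $\B(W(X)) = W(\B(X))$ from Proposition~\ref{prop:WDownwardClosed}, and closure of regular languages under complement for the two-directional definition). No gaps, and no divergence from the paper's approach.
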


\begin{example}
\label{ex:EvenShift}
We show an example of computing the winning shift of a sofic shift using finite automata. Let $X = \B^{-1}(((00)^*1)^*)$ be the two-directional \emph{even shift}. We compute its winning shift $W(X)$. In Figure~\ref{fig:EvenDFA}, a DFA accepting $\B(X)$ is depicted. In Figure~\ref{fig:EvenWin}, we have followed the construction of Proposition~\ref{prop:Regularity} to obtain an alternating automaton for $W(\B(X))$, and in Figure~\ref{fig:EvenWinDFA}, we have used the standard subset construction of \cite{ChKoSt81} to obtain a DFA for $W(\B(X))^R$. From this, we finally conclude that $W(X) = \B^{-1}(A^*BB(A + AB)^*)$, since this language is left extendable. Interestingly, this subshift is almost equal to the well-known \emph{golden mean shift} $\B^{-1}((A + AB)^*)$, which is weakly conjugate to the even shift, see \cite[Section 1.5]{LiMa95} for more information.
\end{example}

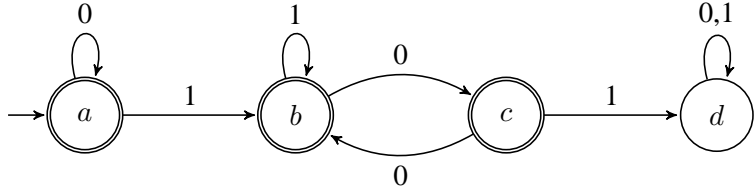
\begin{figure}
\centering
\begin{tikzpicture}[->,>=stealth',shorten >=1pt,auto,node distance=2.8cm,
                    semithick]
  \tikzstyle{every state}=[draw]

  \node[state, initial, initial text=, accepting] (a) {$a$};
  \node[state, accepting] (b) [right of=a] {$b$};
  \node[state, accepting] (c) [right of=b] {$c$};
  \node[state] (d) [right of=c] {$d$};
  
  \path (a) edge [loop above] node {0} (a)
			  (a) edge [] node {1} (b)
  			(b) edge [bend left] node {0} (c)
        (c) edge [bend left] node {0} (b)
        (b) edge [loop above] node {1} (b)
        (c) edge [] node {1} (d)
        (d) edge [loop above] node {0,1} (d);
\end{tikzpicture}
\caption{A DFA accepting the language of the even shift.}
\label{fig:EvenDFA}
\end{figure}

\begin{figure}
\centering
\begin{tikzpicture}[->,>=stealth',shorten >=1pt,auto,node distance=2.8cm,
                    semithick]
  \tikzstyle{every state}=[draw]

  \node[state, initial, initial text=, accepting] (a) {$a$};
  \node[state, accepting] (b) [right of=a] {$b$};
  \node[state, accepting] (c) [right of=b] {$c$};
  \node[state] (d) [right of=c] {$d$};
  
  \path (a) edge [loop above] node {$A,B$} (a)
			  (a) edge [] node {$A,B$} (b)
  			(b) edge [bend left] node {$A,B$} (c)
        (c) edge [bend left] node {$A,B$} (b)
        (b) edge [loop above] node {$A,B$} (b)
        (c) edge [] node {$A,B$} (d)
        (d) edge [loop above] node {$A,B$} (d);
\end{tikzpicture}
\caption{An alternating automaton for $W(\B(X))$ in Example~\ref{ex:EvenShift}. The $A$-transitions are existential, and the $B$-transitions are universal.}
\label{fig:EvenWin}
\end{figure}
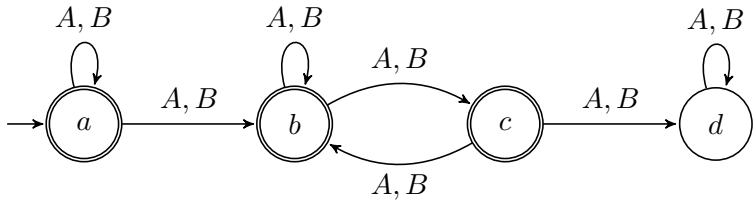

\begin{figure}
\centering
\begin{tikzpicture}[->,>=stealth',shorten >=1pt,auto,node distance=2.8cm,
                    semithick]
  \tikzstyle{every state}=[draw]

  \node[state, initial, initial text=, accepting] (abc) {$\{a,b,c\}$};
  \node[state, accepting] (ab) [right of=abc] {$\{a,b\}$};
  \node[state, accepting] (a) [right of=ab] {$\{a\}$};
  \node[state] (n) [right of=a] {$\emptyset$};
  
  \path (abc) edge [loop above] node {$A$} (abc)
			  (abc) edge [bend left] node {$B$} (ab)
			  (ab) edge [bend left] node {$A$} (abc)
  			(ab) edge [] node {$B$} (a)
  			(a) edge [loop above] node {$A$} (a)
  			(a) edge [] node {$B$} (n)
        (n) edge [loop above] node {$A,B$} (n);
\end{tikzpicture}
\caption{A DFA for $W(\B(X))^R$ in Example~\ref{ex:EvenShift}.}
\label{fig:EvenWinDFA}
\end{figure}
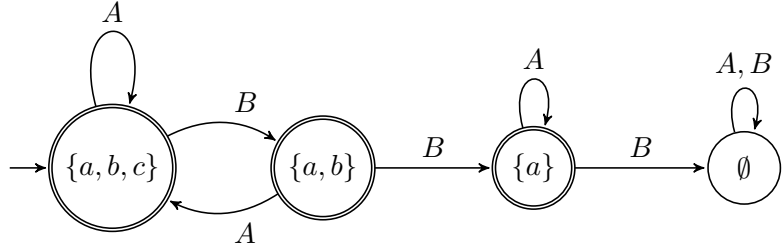

In Example~\ref{ex:WinningShifts} we presented an SFT whose winning shift was proper sofic. This raises the following question.

\begin{question}
What is the class of $W(X)$ for $X$ SFT?
\end{question}

In particular, it is not known whether all downward closed sofic shifts over $\{A, B\}$ can be realized as winning shifts of SFTs.

The following result is also easy to see.

\begin{proposition}
If a subshift $X \subset S^\N$ is recursively enumerable or effective, then so is $W(X)$.
\end{proposition}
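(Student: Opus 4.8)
The plan is to mimic the compactness argument of Proposition~\ref{prop:WDownwardClosed}, but to track the computational content at each step. Recall that by that proposition we already know $W(X)$ is a subshift with $\B(W(X)) = W(\B(X))$, so it suffices to show that the language $W(\B(X))$ is recursively enumerable (respectively co-recursively enumerable), depending on whether $X$ is recursively enumerable or effective. Since $W(\B(X))$ is a language over the finite alphabet $\{A, B\}$, and computability of a subshift is witnessed through its language, establishing (co-)enumerability of $W(\B(X))$ will immediately yield the claim for $W(X)$.

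The key observation is that deciding whether a fixed turn order $a \in \{A, B\}^n$ lies in $W(\B(X))$ is a finite-depth game computation, \emph{relative} to an oracle for membership in $\B(X)$. Concretely, for each finite word $a$ of length $n$, the value of the game $(S, n, \B(X), a)$ is computed by the usual backward induction on the game tree of depth $n$: at a leaf we test whether the constructed word $w \in S^n$ lies in $\B(X)$, at an $A$-node we take a disjunction over the $|S|$ possible letters, and at a $B$-node we take a conjunction. Thus $a \in W(\B(X))$ is a Boolean combination, of bounded (namely $2^n$-term) size, of membership facts ``$w \in \B(X)$'' for the finitely many words $w \in S^n$. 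First I would make this reduction precise: there is a total computable function that, given $a$ and an oracle deciding $\B(X)$ on words of length $|a|$, decides $a \in W(\B(X))$.

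Now I would handle the two cases separately. If $X$ is effective, then $\B(X)$ is decidable, so the oracle above is an actual algorithm, and hence $W(\B(X))$ is decidable; in particular it is both r.e. and co-r.e., so $W(X)$ is effective (indeed the formula shows $W(X)$ is effective whenever $X$ is). If $X$ is merely recursively enumerable, meaning $\B(X)$ is r.e., the situation is more delicate because the game value is an alternation of disjunctions and conjunctions over atoms ``$w \in \B(X)$'', and conjunctions of r.e. predicates behave well (a finite conjunction of r.e.\ facts is r.e.) but negations do not. The saving point is monotonicity: since $\B(X)$ is factor-closed and the target set only grows as more words are enumerated, the predicate ``$a \in W(\B(X))$'' is a \emph{positive} (negation-free) Boolean combination of the atoms $w \in \B(X)$ --- at $A$-nodes we need \emph{some} continuation to remain legal and at $B$-nodes we need \emph{all} continuations to remain legal, and both clauses are phrased in terms of membership in $\B(X)$ rather than non-membership. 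A finite positive Boolean combination of r.e.\ predicates is r.e., so enumerating $\B(X)$ and evaluating the fixed finite formula for each candidate $a$ gives a semi-decision procedure for $W(\B(X))$, proving it r.e.

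The step I expect to be the main obstacle is verifying that the game-value formula is genuinely negation-free in the r.e.\ case. One must be careful that a $B$-move does not implicitly require checking that some word is \emph{absent} from $\B(X)$; I would argue that ``$B$ cannot win on his next move'' is correctly expressed as ``for every letter $c$ that $B$ could play, the resulting word is still in $\B(X)$ \emph{and} $A$ wins the remaining game,'' a universally quantified (but finite, since $|S|$ is finite) conjunction of positive atoms, so no negation is introduced. Once this is checked, the rest is the routine fact that the arithmetical complexity is preserved under finite positive Boolean combinations together with the reduction $\B(W(X)) = W(\B(X))$ supplied by Proposition~\ref{prop:WDownwardClosed}.
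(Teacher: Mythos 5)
Your reduction of the game value to a finite monotone (negation-free) Boolean combination of the atoms ``$w \in \B_n(X)$'' is correct, and your treatment of the recursively enumerable case is sound --- it is in essence the paper's own argument, which phrases the same positivity as monotonicity of $W$ under inclusion ($L \subset L'$ implies $W(L) \subset W(L')$) and dovetails an enumeration of $\B_n(X)$ until $a \in W(L_t)$ for the finite set $L_t$ enumerated so far. The genuine gap is in the effective case: you assert that if $X$ is effective then $\B(X)$ is decidable, but by the paper's definition effectiveness means only that $\B(X)$ is \emph{co}-recursively enumerable, and there exist effective subshifts whose languages are co-r.e.\ but undecidable. So you cannot conclude that $W(\B(X))$ is decidable, and your proof that $W(X)$ is effective rests on a false premise as written.

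The fix follows immediately from the machinery you already set up: dualize. The negation of a monotone formula in the atoms ``$w \in \B_n(X)$'' is, by De Morgan, a monotone formula in the atoms ``$w \notin \B_n(X)$'' (it expresses that $B$ has a winning strategy, with conjunctions at $A$-nodes and disjunctions at $B$-nodes). When $X$ is effective each atom ``$w \notin \B_n(X)$'' is r.e., so the predicate $a \notin W(\B_n(X))$ is r.e.\ by your own closure argument, whence $W(\B(X))$ is co-r.e.\ and $W(X)$ is effective. This is exactly what the paper does when it enumerates $S^n - \B_n(X)$ and applies monotonicity of $W$ in the other direction: once enough words are confirmed absent, $a \notin W(L'_t)$ for the shrinking over-approximation $L'_t \supset \B_n(X)$ already certifies $a \notin W(\B_n(X))$. (Your decidable-case observation remains true as a side remark --- if $\B(X)$ happens to be decidable then so is $W(\B(X))$ --- but it is not the case the statement asks for.)
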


\begin{proof}
We first note that the winning operator $W$ is clearly monotonous with respect to the inclusion order, that is, if $L \subset L' \subset S^*$, then $W(L) \subset W(L')$.

Let then $n \in \N$ and $a \in \{A, B\}^n$ be arbitrary. If $X$ is recursively enumerable, we can enumerate $\B_n(X)$, and since $W$ is monotonous, we can also recognize when $a \in W(\B_n(X))$. Similarly, if $X$ is effective, we can enumerate $S^n - \B_n(X)$ and recognize the case $a \notin W(\B_n(X))$.
\end{proof}

We now give a rather trivial full characterization of the class $\W$.

\begin{proposition}
If a subshift $X \subset \{A, B\}^\I$ is downward closed, then $W(X) = X$.
\end{proposition}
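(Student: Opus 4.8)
The plan is to show both inclusions $W(X) \subseteq X$ and $X \subseteq W(X)$ by reasoning directly about winning strategies in the ordered games over the alphabet $\{A,B\}$, exploiting the fact that here the target subshift lives in the \emph{same} alphabet $\{A,B\}$ in which the turn orders are written. By Proposition~\ref{prop:WDownwardClosed}, it suffices to work with the finite language $\B(X)$ and show $W(\B(X)) = \B(X)$, since $X$ is a subshift and hence determined by its language.

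First I would prove $\B(X) \subseteq W(\B(X))$. Fix $a \in \B_n(X)$; I want a winning strategy for $A$ in the game $(\{A,B\}, n, \B(X), a)$. The key observation is that on turn $i$, the letter $a_i$ records \emph{who} should move, but the players are also constructing a word over the very same alphabet $\{A,B\}$. The natural strategy for $A$ is to aim for the outcome word to equal $a$ itself: whenever it is $A$'s turn (i.e.\ $a_i = A$), $A$ simply plays the letter $A$. On $B$'s turns, $B$ may play either $A$ or $B$; but since $\B(X)$ is downward closed and $a$ is the coordinatewise-largest target consistent with the turn pattern, any letter $B$ chooses only lowers the resulting word in the partial order $\leq$. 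Thus the play $x$ produced always satisfies $x \leq a$ coordinatewise (with $x_i = A = a_i$ on $A$'s turns, and $x_i \leq B = a_i$ on $B$'s turns). Since $a \in \B(X)$ and $\B(X)$ is downward closed, $x \in \B(X)$, so $A$ wins; hence $a \in W(\B(X))$.

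Next I would prove $W(\B(X)) \subseteq \B(X)$ by contraposition: if $a \notin \B(X)$, I want to show $a \notin W(\B(X))$, i.e.\ $B$ has a winning strategy in $(\{A,B\}, n, \B(X), a)$. The natural strategy for $B$ is to always play the letter $B$ on his turns, forcing the outcome word to be as \emph{large} as possible. With this strategy, on every turn $i$ with $a_i = B$ the play has $x_i = B = a_i$, while on $A$'s turns $x_i \in \{A,B\}$, so in fact $x \geq a$ is not guaranteed; instead the best move for $A$ would be to also play to keep the word in $\B(X)$. The cleaner argument is the dual of the previous one: if $A$ could win, then by the downward-closure and the symmetry of roles, $a$ would have to be a factor of some word in $X$, contradicting $a \notin \B(X)$. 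Concretely, whatever $A$ does, $B$ plays $B$, and then the resulting word dominates $a$ on $B$'s coordinates and agrees with whatever $A$ chose on $A$'s coordinates; the coordinatewise supremum over $A$'s choices is exactly $a$, and since $\B(X)$ is downward closed but does not contain $a$, no choice of $A$ keeps the word inside $\B(X)$ once $B$ maximizes.

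The main obstacle is getting the direction $W(\B(X)) \subseteq \B(X)$ exactly right, since the partial-order bookkeeping is more delicate there than in the easy inclusion: I must verify that $B$'s ``always play $B$'' strategy genuinely forces the outcome to lie outside $\B(X)$ for \emph{every} counter-strategy of $A$, rather than merely for some. The right way to close this is to observe that the set of possible outcomes when $B$ always plays $B$ is precisely the downward-closed ``interval'' of words $x$ with $x_i = B$ whenever $a_i = B$, whose coordinatewise maximum is $a$; if all such $x$ lay in $\B(X)$, then by taking their supremum and using that $\B(X)$ is closed under coordinatewise maxima for downward-closed binary subshifts only in the trivial cases, I instead argue directly that $a$ itself is the outcome when $A$ also plays $B$ everywhere, so $a \notin \B(X)$ immediately yields a losing play for $A$. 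This pins down that $A$ has no winning strategy, completing the reverse inclusion and hence the equality $W(X) = X$.
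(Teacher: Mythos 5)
Your first inclusion is fine and is exactly half of the paper's (one-sentence) proof: if $a \in \B(X)$ and $A$ plays the letter $A$ on each of her turns, every outcome $x$ satisfies $x \leq a$ coordinatewise, so downward closure of $\B(X)$ makes $A$'s strategy winning. The genuine gap is in the reverse inclusion, where you explicitly deny the one fact that closes it. When $B$ always plays the letter $B$, the outcome \emph{does} satisfy $x \geq a$ against \emph{every} counter-strategy of $A$: on $B$'s turns $x_i = B = a_i$, and on $A$'s turns $a_i = A$ is the \emph{minimal} letter of the ordered alphabet, so $x_i \geq a_i$ holds trivially whatever $A$ plays. You assert instead that ``$x \geq a$ is not guaranteed,'' and then get the lattice bookkeeping backwards: the set of possible outcomes is the \emph{upward} closure $\{ x \;|\; x \geq a \}$, whose coordinatewise \emph{minimum} is $a$, not a downward-closed interval with maximum $a$; likewise $a$ is the outcome when $A$ plays $A$ everywhere (playing $B$ everywhere yields the all-$B$ word). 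With the correct inequality the argument is immediate: since $\B(X)$ is downward closed and $a \notin \B(X)$, no $x \geq a$ can lie in $\B(X)$ (otherwise $a \leq x \in \B(X)$ would force $a \in \B(X)$), so $B$'s strategy defeats every strategy of $A$ and $a \notin W(\B(X))$.

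As actually written, your closing step only exhibits the single losing play $a$ against one particular behaviour of $A$, which shows that one strategy of $A$ loses, not that all do --- precisely the insufficiency you yourself flagged two sentences earlier; the digression about $\B(X)$ being closed under coordinatewise maxima only in trivial cases is irrelevant and does not substitute for the missing domination argument. The repair above recovers the paper's proof, which is the single observation that each player $P$ should always play the symbol $P$: the two resulting inequalities $x \leq a$ and $x \geq a$, combined with downward closure, give $W(\B(X)) = \B(X)$ and hence $W(X) = X$ in both the one- and two-directional formalisms.
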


\begin{proof}
In a downward closed subshift, each player $P$ should always play the symbol $P$. 
\end{proof}

\begin{corollary}
\label{cor:WCharacterization}
The class of downward closed subshifts over $\{A, B\}$ is exactly $\W$. The class $\W$ is closed under union and intersection.
\end{corollary}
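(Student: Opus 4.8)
The plan is to obtain the corollary as a direct consequence of the immediately preceding proposition together with the structural facts about $W$ already established, and I would split the work according to the two assertions in the statement. For the characterization of $\W$, I would argue by mutual inclusion. First, every member of $\W$ is by definition a winning shift $W(X)$ of some subshift $X$; by Proposition~\ref{prop:WDownwardClosed} in the one-directional case, and by the defining construction in the two-directional case, $W(X)$ is a subshift over $\{A, B\}$, and it was already observed that $W(X)$ is downward closed with respect to the coordinatewise ordering. Hence $\W$ is contained in the class of downward closed subshifts over $\{A, B\}$. Conversely, if $X \subset \{A, B\}^\I$ is any downward closed subshift, the preceding proposition gives $W(X) = X$, so $X$ is literally its own winning shift and therefore lies in $\W$. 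Combining the two inclusions yields the claimed equality.

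For the closure properties I would then invoke the characterization just obtained, which reduces the task to showing that the class of downward closed subshifts over $\{A, B\}$ is itself closed under union and intersection. Since $\{A, B\}^\I$ is compact, a subshift is exactly a closed, shift-invariant set, and both features survive the two operations: closedness is clear for a finite union and an arbitrary intersection, while $\sigma(X \cap Y) \subset X \cap Y$ and $\sigma(X \cup Y) = \sigma(X) \cup \sigma(Y) \subset X \cup Y$ give shift-invariance. Downward closedness is equally stable, by a one-line order-theoretic check: if $X$ and $Y$ are downward closed and $w \in X \cap Y$, then every $v \leq w$ lies in both $X$ and $Y$, hence in the intersection; the analogous argument, splitting on whether $w \in X$ or $w \in Y$, handles the union. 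Thus $X \cap Y$ and $X \cup Y$ are again downward closed subshifts and so belong to $\W$.

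The whole argument is routine and no step presents a genuine obstacle; the single point deserving a moment's care is the verification that a finite union of subshifts is again a subshift, since, unlike intersection, this is where one might worry about shift-invariance, but the computation $\sigma(X \cup Y) = \sigma(X) \cup \sigma(Y) \subset X \cup Y$ settles it at once. I would also note explicitly that the proof is insensitive to the one- versus two-directional convention: every ingredient, namely downward closedness, the equality $W(X) = X$, and the topological closure facts, holds verbatim in both settings.
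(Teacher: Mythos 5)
Your proof is correct and follows exactly the route the paper intends: the paper leaves the corollary unproved because it is immediate from the preceding proposition ($W(X)=X$ for downward closed $X \subset \{A,B\}^\I$) together with the earlier observation that every winning shift is a downward closed subshift, and your mutual-inclusion argument plus the routine verification that downward closed subshifts are stable under union and intersection is precisely that reasoning made explicit. No gaps; your extra care about shift-invariance of unions and the one- versus two-directional conventions is sound but not a departure from the paper's approach.
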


By Corollary~\ref{cor:WCharacterization}, the class of downward closed sofic (recursively enumerable, effective) subshifts over $\{A, B\}$ is exactly the class of winning shifts of sofic (recursively enumerable, effective) subshifts. Thus the classes of winning shifts of sofic shifts and recursive subshifts are also closed under union and intersection. However, the corollary is not very useful when considering classes of subshifts which are not closed under the operation $W$, in particular the SFTs. Therefore, we mention the following standard trick.

\begin{proposition}
Let $X \subset S^\I$ and $Y \subset R^\I$ be subshifts. Then $W(X \times Y) = W(X) \cap W(Y)$.
\end{proposition}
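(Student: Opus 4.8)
The plan is to prove the product formula $W(X \times Y) = W(X) \cap W(Y)$ by exploiting the coordinatewise structure of the game on the product alphabet $S \times R$. The key observation is that a play of the ordered game on $X \times Y$ decomposes into two independent plays: a configuration $z \in (S \times R)^\I$ is an element of $X \times Y$ precisely when its first projection lies in $X$ and its second projection lies in $Y$, and crucially, the constraints on the two coordinates do not interact. I would first set up notation, writing $\pi_S$ and $\pi_R$ for the two coordinate projections from $(S \times R)^\I$ (and from words over $S \times R$), and noting that for a turn order $a$ the same $a$ governs both projected games.

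First I would prove the inclusion $W(X \times Y) \subseteq W(X) \cap W(Y)$. Given a winning strategy $s$ for $A$ in $(S \times R, n, X \times Y, a)$, I would extract a strategy for the game on $X$ alone by having $A$ simulate the full product game: whenever $A$ must respond in the $X$-game to a word $u \in S^{\leq n-1}$, she pairs $u$ with any legal shadow play in the $R$-coordinate (for instance, she maintains a fixed background $R$-component, letting $B$'s $R$-moves be arbitrary), consults $s$, and outputs the $S$-projection of $s$'s pick. Since $s$ guarantees the product play lands in $X \times Y$, its $S$-projection lands in $X$, so this derived strategy is winning; the symmetric argument handles $Y$. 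The subtle point here is that in the product game $B$ controls \emph{both} coordinates on his turns, so when simulating the $X$-game we must check that $A$'s strategy remains well-defined against an adversary who, in the real $X$-game, only dictates the $S$-coordinate. This is handled by letting $A$ fill in the missing $R$-coordinate herself in the simulation, which only gives $B$ fewer options and hence cannot hurt $A$.

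For the reverse inclusion $W(X) \cap W(Y) \subseteq W(X \times Y)$, I would take winning strategies $s_X$ and $s_Y$ for $A$ in the two component games under the \emph{same} turn order $a$, and combine them into a product strategy $s$ that plays coordinatewise: on input $z \in (S \times R)^{\leq n-1}$, set $s(z) = (s_X(\pi_S(z)), s_Y(\pi_R(z)))$. Because the winning condition for $X \times Y$ factors as membership in $X$ in the first coordinate and membership in $Y$ in the second, and because the two component strategies each defeat every adversary in their respective games, the combined play projects to a winning play in each coordinate, hence lies in $X \times Y$. I would carry out both inclusions uniformly across finite lengths $n$ via $W(\B(X \times Y)) = W(\B(X)) \cap W(\B(Y))$ and then invoke Proposition~\ref{prop:WDownwardClosed} to pass to the subshift level, using $\B(X \times Y) = \B(X) \times \B(Y)$ in the natural sense.

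I expect the main obstacle to be the careful bookkeeping in the forward inclusion: one must argue that restricting the product strategy to a single coordinate yields something winning, which requires showing that $A$'s simulation does not secretly rely on information $B$ would control in the isolated game. The cleanest way to dispatch this is to observe that giving $A$ control over the irrelevant coordinate in the simulation only shrinks $B$'s move set, and that a strategy winning against a larger adversary class remains winning against a smaller one; this monotonicity in the adversary's power, rather than any delicate combinatorial estimate, is the real content of the argument.
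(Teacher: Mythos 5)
Your proof is correct. The paper actually states this proposition without proof, introducing it only as a ``standard trick,'' and your argument --- the coordinatewise combination $s(z) = (s_X(\pi_S(z)), s_Y(\pi_R(z)))$ for the inclusion $W(X) \cap W(Y) \subseteq W(X \times Y)$, and the simulation with a deterministic filler in the irrelevant coordinate for the converse, lifted to the subshift level via $\B(X \times Y) = \B(X) \times \B(Y)$ and Proposition~\ref{prop:WDownwardClosed} --- is precisely the standard argument the authors intend, with the key subtleties (well-definedness of the derived strategy from the $S$-history alone, and monotonicity in the adversary's power) correctly identified and handled.
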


\begin{corollary}
The class of winning shifts of SFTs is closed under intersection.
\end{corollary}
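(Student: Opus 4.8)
The plan is to combine the immediately preceding proposition, $W(X \times Y) = W(X) \cap W(Y)$, with the trivial observation that the class of SFTs is closed under direct products. The key structural fact is that if $X \subset S^\I$ and $Y \subset R^\I$ are both SFTs, then their product $X \times Y$, viewed as a subshift over the alphabet $S \times R$, is again an SFT.

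First I would make precise the identification of $X \times Y$ with a subshift over the product alphabet $S \times R$: a configuration of $X \times Y$ is a pair $(x, y)$ with $x \in X$, $y \in Y$, which we read coordinatewise as a single configuration over $S \times R$. Under this identification, a word $w \in (S \times R)^*$ appears in $X \times Y$ if and only if its two component projections appear in $X$ and $Y$ respectively. Next I would verify that $X \times Y$ is an SFT. If $X = \X_{F_X}$ and $Y = \X_{F_Y}$ with $F_X, F_Y$ finite, then I would exhibit a finite forbidden set $F$ for $X \times Y$ over $S \times R$: a window of size $\max(m_X, m_Y)$ suffices, where $m_X, m_Y$ are the window sizes of $X$ and $Y$, since forbidding a pattern in either component is a local condition. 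Concretely, a window-$k$ word over $S \times R$ is allowed precisely when both its projections are allowed windows in $X$ and $Y$, and this is a finite constraint. Hence $X \times Y$ is an SFT.

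Given this, the corollary follows immediately: if $W(X)$ and $W(Y)$ are winning shifts of SFTs $X$ and $Y$, then by the preceding proposition $W(X) \cap W(Y) = W(X \times Y)$, and since $X \times Y$ is itself an SFT, the intersection $W(X) \cap W(Y)$ is again the winning shift of an SFT, as required.

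There is essentially no hard part here; the only point demanding care is the bookkeeping around the product-alphabet identification and confirming that the SFT property is preserved under direct products, which is a standard fact but worth stating explicitly so that the appeal to the preceding proposition is legitimate. One might remark for completeness that the analogous statement for sofic shifts also holds, since sofic shifts are likewise closed under products, which is already subsumed by Corollary~\ref{cor:WCharacterization}; the present corollary is stated separately precisely because the SFTs are \emph{not} closed under the operation $W$ itself, so the product trick is genuinely needed rather than a direct application of Corollary~\ref{cor:WCharacterization}.
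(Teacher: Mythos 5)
Your proof is correct and takes exactly the approach the paper intends: the corollary is stated without proof as an immediate consequence of the preceding proposition $W(X \times Y) = W(X) \cap W(Y)$ together with the standard fact that the product of two SFTs over the product alphabet $S \times R$ is again an SFT. Your explicit verification of the product-alphabet identification and the finite window for $X \times Y$ is precisely the bookkeeping the paper leaves implicit, so there is nothing to add.
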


We now return to the question of how different properties of $X$ affect the winning shift $W(X)$.

\begin{proposition}
\label{prop:MixingSFT}
If $X \subset S^\Z$ is a mixing (transitive) SFT, then $W(X)$ is mixing (transitive).
\end{proposition}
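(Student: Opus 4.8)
The plan is to treat the transitive and mixing cases uniformly by producing, for any two words $p, q \in \B(W(X)) = W(\B(X))$ (Proposition~\ref{prop:WDownwardClosed}), a connecting block consisting entirely of the symbol $A$. Concretely, I would show that there is a constant $C$, depending only on $X$, such that $p A^k q \in W(\B(X))$ for every $k \geq C$. Since $r = A^k$ can have any length $\geq C$, this yields that $W(X)$ is mixing with mixing distance at most $C$, while taking the single value $k = C$ gives transitivity. One also has to verify that $p A^k q$ lies in $\B(W(X))$ and not merely in $W(\B(X))$: right extendability is Lemma~\ref{lem:RightExtendable}, and for left extendability one uses that $p$, being in $\B(W(X))$, extends to some $c p \in W(\B(X))$, so applying the same claim to $c p$ in place of $p$ and iterating produces arbitrarily long left extensions of $p A^k q$ inside $W(\B(X))$.

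The strategy for $A$ in the game with turn order $p A^k q$ has three phases, exploiting the fact that the $k$ middle coordinates all belong to $A$. In the first phase, $A$ follows a fixed winning strategy for the order-$p$ game, producing a word $u \in \B_{|p|}(X)$ no matter how $B$ plays. In the second phase, $A$ spends her $k$ free moves on a connecting block $c$; since $X$ is a mixing SFT, for $k$ at least the mixing distance she can guarantee $u c \in \B(X)$ and, moreover, steer the final coordinates of $u c$ to equal any prescribed word of $\B(X)$. In the third phase, $A$ plays the order-$q$ part so as to produce a word $w$ with $u c w \in \B(X)$, at which point the full play lies in $\B(X)$ and $A$ has won. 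The role of the buffer is thus to place $A$ in a favourable \emph{launch context} before the order-$q$ part begins.

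The main obstacle is the seam between the buffer and the order-$q$ part. Because the block $c$ must be committed before $B$ makes his moves in the order-$q$ part, and because membership in $\B(X)$ is governed by the window size $N$, the last $N-1$ coordinates of $u c$ must be compatible with \emph{every} possible prefix that the order-$q$ play can take; as $B$ controls some of the first $N-1$ coordinates of $w$, no single context of length $N-1$ can directly precede all of them in a general SFT. To overcome this, I would fix an intrinsically synchronizing word $g \in \B(X)$ of the mixing SFT $X$ and arrange, again by mixing, that the buffer ends with $g$. Synchronization decouples the past from the future at this point, so that the validity of the whole word reduces to the validity of $u c$ together with the condition $g w \in \B(X)$; the order-$q$ part is then played inside the follower set $F_g = \{ w \;|\; g w \in \B(X) \}$.

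It then remains to show that $A$ wins the order-$q$ game with target $F_g$, and I expect this to be the delicate point of the argument. Mixing is used once more here: by the mixing property of $X$ the follower set $F_g$ differs from $\B_{|q|}(X)$ only through constraints on a bounded prefix, so past an initial segment the order-$q$ game in $F_g$ coincides with the one in $\B(X)$, which $A$ wins by hypothesis. The crux is to choose $g$ so that $A$ can additionally survive this initial segment against every play of $B$; I would handle it by selecting $g$ whose follower set is as large as possible and by combining the winning strategy for the order-$q$ game over $\B(X)$ with mixing corrections on $A$'s own moves during the seam. With a winning launch context and the follower-set game in hand, the three-phase strategy is winning, which establishes the claim $p A^k q \in W(\B(X))$ and hence the mixing (respectively transitivity) of $W(X)$; the soficity of $W(X)$ from Corollary~\ref{cor:Soficity} guarantees that the uniform mixing distance $C$ is the appropriate notion.
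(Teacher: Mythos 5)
Your skeleton (an all-$A$ buffer between $p$ and $q$, a three-phase strategy, SFT locality at the seam) matches the paper's, but your Phase~3 contains a genuine gap, and it is precisely the point the paper's proof is built to avoid. You reduce to the claim that $A$ wins the order-$q$ game played inside the follower set $F_g$ of a well-chosen synchronizing word $g$, and propose to deduce this from the winning strategy for the order-$q$ game over $\B(X)$ together with ``mixing corrections on $A$'s own moves'' during an initial segment. This cannot work when $q$ has $B$-coordinates among its first $r-1$ positions ($r$ the window size): there it is $B$'s choices, not $A$'s, that must be compatible with $g$, and $A$ has no move with which to correct them; moreover, ``$g$ with follower set as large as possible'' is neither well defined nor shown to admit the needed continuations. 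The paper's idea, which your sketch is missing, is to pad $q$ on the left: since $W(X)$ is downward closed, $A^rq \in \B(W(X))$, so $A$ has a winning strategy for the order $A^rqA\INF$, and the word $w \in \B_r(X)$ that this strategy plays in its first $r$ (free) moves serves as the launch context. $A$ then uses mixing to fill the buffer with a word $v$ such that $uvw \in \B(X)$ and simply continues the padded strategy; since in an SFT every word of length $r$ is intrinsically synchronizing, $uvw \in \B(X)$ and $wt \in \B(X)$ glue along the overlap $w$, so no follower-set game needs to be analyzed at all, and no special choice of $g$ is required.

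Second, your plan to ``treat the transitive and mixing cases uniformly'' via a single constant $C$ with $pA^kq \in W(\B(X))$ for \emph{all} $k \geq C$ is false for transitive non-mixing $X$: it would make $W(X)$ mixing, but for the transitive SFT $Z = \B^{-1}((01+0001)^*)$ of Example~\ref{ex:WinningShifts} one has $W(Z) = \B^{-1}((AB(AA)^+)^*)$, in which consecutive $B$'s always lie an even distance apart, so $W(Z)$ is not mixing. Without mixing, a connecting word between $u$ and $w$ of a prescribed length exists only when their period classes are compatible, and these depend on $u$ and $w$; the paper handles this with the period-class machinery of \cite[Section 4.5]{LiMa95}, choosing $k = m + d$ where the defect $d \in [0,p-1]$ is independent of $B$'s play once the left padding has length at least $r$. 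So besides the padded-strategy idea needed in Phase~3, your argument is missing the period-class bookkeeping that the transitive case requires.
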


\begin{proof}
Let first $X$ be mixing with mixing distance and window size $r$. Let $a, b \in \B(W(X))$ and $k \geq 2r$. In her winning strategy for $A^rbA \INF$, $A$ starts by playing a word $w \in \B_r(X)$. We claim that $\INF AaA^kbA \INF \in W(X)$, and that the following is a winning strategy for $A$ on $A^iaA^kbA \INF$. In the first $i + |a|$ positions, $A$ plays as on $A^iaA \INF$, resulting in a word $u \in \B_{i+|a|}(X)$. In the next $k$ coordinates, she plays a word $v$ such that $uvw \in \B(X)$, and then continues as on $A^rbA \INF$. The strategy is winning for $A$, since the symbols played in the first $i + |a| + k - r$  coordinates have no effect on the game played on the tail $bA \INF$. This implies that $aA^kb \in \B(W(X))$, and thus $W(X)$ is mixing.

The case of transitive $X$ is slightly more complicated, but essentially similar. Here we need the following facts from \cite[Section 4.5]{LiMa95}, where $r$ is again the window size of $X$. There exists $p \in \N$, called the \emph{period of $X$}, and each word $w \in \B_r(X)$ has a \emph{period class} $c(w) \in [0, p-1]$ such that if $v \in \B_{r+1}(X)$, then $c(v_{[1,r]}) = c(v_{[0,r-1]}) + 1 \bmod p$. There exists $m \geq r$ such that for all $u, v \in \B_r(X)$ with the same period class, there exists $w \in \B_m(X)$ such that $uwv \in \B(X)$.

If now $a, b \in \B(W(X))$, we claim that $A$ has a winning strategy on the configuration $A^iaA^kbA \INF$ for some $k \in [m, m+p-1]$. Again until the final letter of $a$, $A$ should play as on $A^iaA \INF$, resulting in a word $u \in \B_{i + |a|}(X)$. Let then $w \in \B_r(X)$ be the word played by $A$ in her winning strategy for $A^rbA \INF$, and denote $d = c(w) - c(u_{[|u|-r-1, |u|-1]}) \bmod p$. If $i \geq r$, then $d$ does not depend on the strategy of $B$. We now let $k = m + d$, and let $A$ continue as on $A^iaA \INF$ in the next $d$ coordinates, resulting in a word $u' \in \B_{i + |a| + d}$ such that $c(w) = c(u'_{[|u'|-r-1, |u'|-1]})$. Next, $A$ plays a word $v \in \B_m(X)$ such that $u'vw \in \B(X)$, given by the previous paragraph, and proceeds as on $A^rbA \INF$.
\end{proof}

In Example~\ref{ex:WinningShifts}, we computed the winning shifts of some two-directional subshifts, and saw that Proposition~\ref{prop:MixingSFT} does not hold for general sofic shifts, and that the winning shift of a transitive SFT can be proper sofic. Note that the proposition does not hold as such for one-directional subshifts. We also have the following simple result, which can be used in the computation of some winning shifts.

\begin{proposition}
\label{prop:Mixing}
Suppose $X \subset S^\I$ is a mixing sofic shift with mixing distance $m$, and that $B \sqsubset W(X)$. Then $\B^{-1}((A^*A^{m+k}B)^*) \subset W(X)$ for some $k \in \N$. If $X$ is an SFT with window size $r$, we can choose $k = r-1$.
\end{proposition}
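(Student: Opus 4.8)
The plan is to exhibit a winning strategy for $A$ for every turn order $a \in \B^{-1}((A^* A^{m+k} B)^*)$; recall that such an $a$ is precisely one in which any two consecutive occurrences of $B$ are separated by at least $m+k$ letters $A$. The guiding principle is that $A$ should arrange the played word to be \emph{universally extendable} right before each move of $B$ -- meaning a word $p \in \B(X)$ with $pc \in \B(X)$ for every $c \in S$ -- so that whatever letter $B$ inserts keeps the word in $\B(X)$, and then spend the $A$-moves that follow $B$'s choice to repair the word back into such a state before the next $B$ occurs. If $A$ can maintain this invariant forever, then every prefix of the play remains in $\B(X)$, so the resulting configuration lies in $X$ (as $X$ is a subshift) and $A$ wins.

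First I would use the hypothesis $B \sqsubset W(X)$ to produce the basic resource: since $A$ wins some ordered game in which $B$ takes a turn, there is a play position after which every letter of $S$ is a legal continuation, so universally extendable words exist; fix one such word $z$. The heart of the argument is then the repair step. Suppose the play so far is a word $p \in \B(X)$ and that $B$ has just inserted a letter $c$, so the current word is $pc \in \B(X)$, with at least $m+k$ moves of $A$ available before the next $B$. Using mixing with distance $m$, for any target length $\ell \geq m$ there is a connecting word $v$ of length $\ell$ with $pc\,v\,z \in \B(X)$; $A$ plays such a $v$ followed by $z$, choosing $|v|$ so that $pcvz$ ends exactly at the coordinate preceding the next $B$. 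This is possible precisely because the gap is at least $m+k$: the word $vz$ can be made to have any length that is at least $m + |z|$, and the budget $m+k$ is what guarantees room both to reconnect (cost $\geq m$) and to reinstall the safe suffix. The same scheme handles the first segment and, in the two-directional case, every shifted suffix uniformly, since $A$'s moves in the first $m$ coordinates of each gap have no effect on the game played on the tail.

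For the SFT case I would take $z$ to be a word of length $r-1$ all of whose one-letter extensions are allowed, and observe that in an SFT universal extendability is determined by the length-$(r-1)$ suffix alone: appending a letter to a word of $\B(X)$ can only create the single new length-$r$ factor formed by that suffix, so any word ending in $z$ is universally extendable no matter what precedes it. Hence $A$ needs exactly $m$ moves to reconnect by mixing and $r-1$ moves to write $z$, giving the budget $m+k$ with $k = r-1$; padding the connecting word $v$ absorbs any larger gap.

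The main obstacle is the repair step in the general sofic case, where universal extendability is \emph{not} suffix-determined: reconnecting to an abstract universally extendable word $z$ by mixing need not land $A$ in a universally extendable \emph{state}, because the left context accumulated during play can forbid continuations that $z$ admits in isolation. Making the invariant genuinely maintainable -- that is, ensuring the safe state $A$ returns to is one from which the recovery can be repeated indefinitely -- is where the real work lies, and it is here that the slack $m$ (and a possibly larger $k$ than in the SFT case) must be spent; the SFT case sidesteps the difficulty entirely thanks to suffix-determinacy, which is exactly why the clean bound $k = r-1$ is available there.
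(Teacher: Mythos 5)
Your SFT argument is sound and matches the paper's: in an SFT of window size $r$, every word of $\B_{r-1}(X)$ is intrinsically synchronizing (your ``suffix-determinacy''), so maintaining the invariant ``universally extendable before each $B$-turn'' via an $m$-step mixing connector plus the $(r-1)$-letter safe suffix $z$ gives exactly $k = r-1$. But for the general sofic case you have only named the obstacle, not overcome it, and that obstacle is the actual content of the proposition: as you say yourself, universal extendability is not suffix-determined, so after the play $w u z$ nothing in your argument guarantees $wuzc \in \B(X)$ for all $c$, and the invariant is not maintained. A proof that ends with ``this is where the real work lies'' has a genuine gap precisely there.

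The paper closes this gap with one additional idea: choose the safe word $v$ to be not only universally extendable but also \emph{intrinsically synchronizing}, meaning $uv, vw \in \B(X)$ implies $uvw \in \B(X)$; by the theory of synchronizing words for irreducible sofic shifts (\cite[Section 3.3]{LiMa95}) such a $v$ can be found, and one takes $k = |v|$. Then the left-context problem evaporates: after $B$'s turn the word is some $w \in \B(X)$, mixing lets $A$ play $u v$ with $wuv \in \B(X)$ and $|uv| \geq m + |v|$, and since $vc \in \B(X)$ for every $c \in S$, intrinsic synchronization applied to $(wu)v$ and $vc$ yields $wuvc \in \B(X)$ regardless of what accumulated on the left. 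This is also why the SFT bound is what it is: words of length $r-1$ in an SFT are automatically intrinsically synchronizing, so your suffix-determinacy observation is the special case of the mechanism you were missing in general. To repair your write-up, replace the abstract universally extendable $z$ by a universally extendable intrinsically synchronizing $v$ and cite the existence of synchronizing words in mixing sofic shifts; the rest of your strategy description then goes through verbatim.
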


\begin{proof}
We prove the claim in the one-directional case, since the arguments used are the same in both. Since $B \sqsubset W(X)$, there exists $v \in \B(X)$ such that $vc \in \B(X)$ for all $c \in S$, and by \cite[Section 3.3]{LiMa95}, we can choose $v$ to be \emph{intrinsically synchronizing}, that is, $uv, vw \in \B(X)$ implies $uvw \in \B(X)$. Also, if $X$ is an SFT with window size $r$, we can choose $v \in \B_{r-1}(X)$.

Let $a \in \B^{-1}((A^*A^{m+|v|}B)^*) \subset \{A, B\}^\N$ be arbitrary. We claim that $A$ has a winning strategy for the game $(S, \N, X, a)$. Without loss of generality we may assume that $a_0 = B$. Now, suppose that after some turn of $B$, the word $w$ constructed so far is in $\B(X)$. Now $A$ gets to play in the next $n \geq m + |v|$ coordinates. Since $m$ is a mixing distance for $X$, she can play the word $uv$, where $u \in \B_{n - |v|}(X)$ is such that $wuv \in \B(X)$. By the assumption on $v$, we have $wuvc \in \B(X)$ for all $c \in S$, and thus $B$ cannot introduce a forbidden word in his turn. This proves the claim.
\end{proof}

Using the ideas of the above proposition, we recompute the winning shift of the even shift of Example~\ref{ex:EvenShift}. This approach for computing $W(X)$ from a sofic shift $X$ is more `hands-on' and uses less general arguments, but may sometimes be less tedious, especially if the deterministic automaton for $\B(X)$ has many states.

\begin{example}
Let $X$ be again the even shift, and suppose that $w \in \B(X)$. Depending on where the rightmost $1$ occurs in $w$ (and if it exists), either $v = w0$ or $v = w1$ satisfies $v0, v1 \in \B(X)$. As in the proof of Proposition~\ref{prop:Mixing}, we then have $\B^{-1}((A^+B)^*) \subset W(X)$. It is easy to see that if $a \in (A^+B)^*$ and $n \in \N$, then $A^nBBa \in W(\B(X))$: Whatever $B$ plays in the first two coordinates, $A$ can still recover. On the other hand, $BA^nBB \notin W(\B(X))$ for all $n \in \N$, since $B$ can win by playing $1$ in his first turn, and $01$ or $10$ on the last two turns, depending on the parity of the last $1$ played. This shows again that $W(X) = \B^{-1}(A^*BB(A + AB)^*)$.
\end{example}

The following example shows yet another difference between the one- and two-directional cases.

\begin{example}
Fix the alphabet $S$. It is easy to check that the set of subshifts of $S^\I$ is closed with respect to the Hausdorff metric, and thus compact. For subshifts $X, Y \subset S^\I$, the Hausdorff distance $d_H(X, Y)$ is at most $2^{-n}$ iff we have $\B_n(X) = \B_n(Y)$. From Proposition~\ref{prop:WDownwardClosed} we then immediately see that the winning operator $W$ is continuous with respect to $d_H$ in the one-directional case. However, in the two-directional case, $W$ is not continuous. To see this, let $X_p = \{ x \in \{0, 1\}^\Z \;|\; \sigma^p(x) = x \}$ be the subshift of all $p$-periodic binary configurations, and $X = \{0, 1\}^\Z$. Then the sequence $(X_p)_{p \in \N}$ clearly converges to $X$ in the Hausdorff metric. But $W(X) = \{A, B\}^\Z$ and $W(X_p) = \{\INF A \INF\}$ for all $p \in \N$, so $(W(X_p))_{p \in \N}$ certainly does not converge to $W(X)$.
\end{example}

\section{Small Winning Shifts}
\label{sec:Minimals}

In this section, we briefly study subshifts with very small winning shifts and discuss winning shifts of minimal subshifts. Note that the only minimal downward closed subshift is $\B^{-1}(A^*)$, so that the following characterizes subshifts with minimal winning shifts.

\begin{proposition}
\label{prop:Periodic}
Let $X \subset \{0,1\}^\I$ be a binary subshift. If $\I = \Z$, then $W(X) = \{\INF A \INF\}$ if and only if $X$ is periodic. If $\I = \N$, then $W(X) = \{A \INF\}$ if and only if $|\B_1(X)| = 1$
\end{proposition}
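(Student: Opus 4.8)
The plan is to reduce both equivalences to the games in which $B$ moves exactly once. Recall that $W(X)$ is downward closed and, in the two-directional case, shift-invariant, so $W(X)$ coincides with its minimal element precisely when no turn order containing a $B$ is winning; by downward-closedness it is enough to exclude the \emph{single-$B$} orders. In the one-directional case these are the words $A^k B A\INF$ with $k \geq 0$, obtained from an arbitrary winning order by replacing all of its $B$'s except the first with $A$'s. In the two-directional case shift-invariance lets me place the unique $B$ at the origin, and the description $a \in W(X) \Leftrightarrow (\forall i)\, a_{[i,\infty)} \in W(X_\N)$ turns the membership $\INF A B A \INF \in W(X)$ into the conjunction of $A^k B A\INF \in W(X_\N)$ over all $k \geq 0$ (here $\B(X_\N) = \B(X)$). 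So everything rests on deciding these one-sided single-$B$ games.

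Next I would establish the key combinatorial step: $A^k B A\INF \in W(X_\N)$ if and only if some word $w \in \B_k(X)$ admits both right extensions $w0, w1 \in \B(X)$. Since $A$ alone fills the first $k$ coordinates, she can drive the play to any prescribed $w \in \B_k(X)$ (each prefix of $w$ lies in $\B(X)$ by factor-closedness); after $B$'s single reply $b$ the built word is $wb$, and by right-extendability $A$ can complete the infinite word exactly when $wb \in \B(X)$. Thus $A$ wins iff she can reach a $w$ surviving both replies of $B$. Writing $P_k$ for the property ``every $w \in \B_k(X)$ has a unique right extension'', this reads $A^k B A\INF \notin W(X_\N) \Leftrightarrow P_k$. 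Combining with the reduction above, $W(X) = \{A\INF\}$ (one-directional) iff $P_k$ holds for \emph{all} $k$, whereas $W(X) = \{\INF A \INF\}$ (two-directional) iff $P_k$ holds for \emph{some} $k$.

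For $\I = \N$ the statement then follows from the $k = 0$ instance alone: $P_0$ says the empty word has a unique one-letter extension, i.e. $|\B_1(X)| = 1$. A one-letter subshift is just $\{c\INF\}$, for which $P_k$ is trivial at every $k$; hence $|\B_1(X)| = 1$ is equivalent to ``$P_k$ for all $k$'', and therefore to $W(X) = \{A\INF\}$.

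The two-directional case carries the real content. For the easy direction, if $X$ is periodic --- meaning every configuration is periodic, equivalently $X$ is finite --- I pick a common period $N$ (so $\sigma^N x = x$ for all $x \in X$); then a window $w$ of length $k = N$ forces the next symbol to equal $w_0$, giving $P_N$. For the converse, assume $P_k$ for some $k$. Then ``take the forced right extension and slide the window one step'' is a well-defined map $f : \B_k(X) \to \B_k(X)$, and for each $x \in X$ the windows $x_{[i,i+k-1]}$ form a bi-infinite $f$-orbit. The crux is that a bi-infinite orbit of a self-map of a finite set must be periodic: every window lies in the eventual image $\bigcap_{n} f^n(\B_k(X))$, on which $f$ restricts to a permutation, so the windows recur with a period $P$ depending only on $X$ and $k$. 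Since $x_i$ is the first symbol of the $i$-th window, every $x \in X$ is then $P$-periodic with the \emph{same} $P$, whence $X \subseteq \{x : \sigma^P x = x\}$ is finite, i.e. periodic. I expect this last periodicity argument --- extracting a single uniform period from the finite follower map and ruling out orbits that are merely eventually periodic --- to be the main obstacle; the rest is a direct unwinding of the single-$B$ games. Throughout I assume $X \neq \emptyset$, so that $A\INF$ and $\INF A \INF$ are genuinely winning, the empty subshift being a degenerate exception.
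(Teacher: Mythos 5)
Your proof is correct, and it amounts to a fully detailed version of an argument the paper compresses into three lines. The paper's pivot is the condition ``$x_{(-\infty,0]}$ uniquely determines $x_1$ for all $x \in X$,'' asserted without proof to be equivalent both to periodicity and to $W(X) = \{\INF A \INF\}$; for $\I = \N$ it merely notes that $BA\INF \notin W(X)$ forces one letter to be absent from $X$, which is exactly your $P_0$. You instead pivot on the finite-window conditions $P_k$ (no word of $\B_k(X)$ has two right extensions), which you extract directly from the single-$B$ games $A^k B A\INF$ via downward closedness and the suffix description of two-directional winning shifts; the paper's infinite-tail condition is equivalent to ``$P_k$ for some $k$'' by a compactness argument it leaves implicit, and your route avoids that compactness step entirely because the game characterization is already finitary. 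You then supply the step hidden behind the paper's ``simply note'': the follower map $f$ on $\B_k(X)$, its restriction to the eventual image $\bigcap_n f^n(\B_k(X))$ where it is a permutation, and hence one uniform period $P$ valid for all of $X$ simultaneously --- correctly identified as the crux, since a merely pointwise-periodic conclusion would not suffice. One caveat: your parenthetical ``every configuration is periodic, equivalently $X$ is finite'' is true but is itself a nontrivial theorem (provable, e.g., by a Baire category argument), and you neither prove it nor need it; since the paper evidently uses ``periodic'' in the uniform sense (compare its use of $X_p = \{ x \;|\; \sigma^p(x) = x \}$ in the continuity example), you should simply take the uniform period as the definition rather than lean on the unproved equivalence. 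Your handling of the empty subshift as a degenerate exception agrees with the paper's implicit convention.
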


\begin{proof}
Simply note that in the two-directional case, periodicity of $X$ is equivalent to the condition that $x_{(-\infty, 0]}$ uniquely determines $x_1$ for all $x \in X$. This in turn is clearly equivalent to $W(X) = \{\INF A \INF\}$.

The second claim is obvious: if $BA \INF \notin W(X)$, then either $0$ or $1$ occurs in no point of $X$.
\end{proof}

Also the case of a winning shift with at most one $B$ in each point is interesting.

\begin{proposition}
\label{prop:Sturmian}
Let $X \subset \{0,1\}^\N$ be a binary subshift. Then $W(X) = \B^{-1}(A^*BA^*) \subset \{A, B\}^\N$ if and only if $|\B_n(X)| = n+1$ for all $n \in \N$.
\end{proposition}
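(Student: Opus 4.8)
The plan is to work at the level of languages, using Proposition~\ref{prop:WDownwardClosed} to rewrite the claimed equality $W(X) = \B^{-1}(A^*BA^*)$ as $W(\B(X)) = \{a \in \{A,B\}^* \mid |a|_B \leq 1\}$, and then to translate everything into statements about \emph{right-special} factors of $X$, where I call $u \in \B(X)$ right-special if $u0, u1 \in \B(X)$. Since $\B(X)$ is right extendable, the case of a single $B$ is immediate: for all $n,m$, one has $A^n B A^m \in W(\B(X))$ if and only if $\B_n(X)$ contains a right-special word, because $A$'s only constraint is to reach, in her first $n$ moves, a word $u$ with $u0,u1 \in \B(X)$ so that $B$'s letter cannot create a forbidden pattern, after which right extendability lets her finish the remaining $m$ moves. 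I will also use the standard binary identity $|\B_{n+1}(X)| - |\B_n(X)| = r(n)$, where $r(n)$ is the number of right-special words of length $n$ (each factor having one or two extensions). Thus all one-$B$ words lie in $W(\B(X))$ exactly when $r(n) \geq 1$ for every $n$.

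The heart of the argument is the case of two $B$'s. I claim that $A$ wins a word of the form $A^n B A^k B A^m$ if and only if there is a right-special word $u$ of length $n$ such that both $u0$ and $u1$ are prefixes of right-special words of length $n+1+k$: indeed $A$ must first reach such a $u$ (else $B$'s first letter kills her), and after $B$ plays a letter $c_1$ she must be able to steer $u c_1$, in $k$ further moves, to a right-special word of length $n+1+k$ in order to survive $B$'s second letter. This single combinatorial criterion drives both directions, and verifying it carefully — in particular that no \emph{other} $A$-strategy can evade the right-specialness requirement just before each $B$-move — is the step I expect to be the main obstacle.

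For the direction $\Rightarrow$, suppose $W(\B(X))$ consists exactly of the words with at most one $B$. Then $BA^m \in W(\B(X))$ forces $|\B_1(X)| = 2$, and the one-$B$ words give $r(n) \geq 1$, hence $|\B_{n+1}(X)| \geq |\B_n(X)| + 1$ for all $n$. For the reverse inequality I argue by contraposition: if some length $n$ had two distinct right-special words $u \neq u'$, let $q$ be their longest common prefix, of length $j < n$; then $q$ is itself right-special and $q0, q1$ extend, inside $u$ and $u'$, to the right-special words $u, u'$ of length $n$. By the two-$B$ criterion, $A$ then wins $A^j B A^{n-1-j} B A^m$ — she plays $q$, and answers $B$'s letter $c_1$ by completing to whichever of $u, u'$ begins with $qc_1$ — contradicting the hypothesis that no two-$B$ word is won by $A$. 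Hence $r(n) \leq 1$ for all $n$, so $|\B_{n+1}(X)| - |\B_n(X)| = 1$, and together with $|\B_0(X)| = 1$ induction yields $|\B_n(X)| = n+1$.

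For the direction $\Leftarrow$, assume $|\B_n(X)| = n+1$ for all $n$, so $r(n) = 1$ and there is a unique right-special word $r_n$ of each length $n$. The one-$B$ words are won since $r(n) \geq 1$. No two-$B$ word $A^n B A^k B A^m$ is won: $A$ must play $r_n$, and after $B$'s letter $c_1$ she would need a right-special word of length $n+1+k$ beginning with $r_n c_1$; but the only candidate is $r_{n+1+k}$, whose length-$(n+1)$ prefix is fixed, so it begins with $r_n c_1$ for at most one value of $c_1$, and $B$ plays a value for which no such word exists. Since $W(\B(X))$ is downward closed for the order $A < B$ and any word with at least two $B$'s dominates a word of the form $A^n B A^k B A^m$, all such words are excluded. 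This gives $W(\B(X)) = \{a \mid |a|_B \leq 1\}$, and hence $W(X) = \B^{-1}(A^*BA^*)$.
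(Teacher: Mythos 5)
Your proof is correct and takes essentially the same route as the paper's: both reduce the statement, via $\B(W(X)) = W(\B(X))$ and the counting identity $|\B_{n+1}(X)| - |\B_n(X)| = r(n)$ for the number $r(n)$ of right-special factors, to showing that a turn order with two $B$'s is winnable for $A$ if and only if some length admits two distinct right-special words. The only difference is cosmetic: the paper aligns two distinct right-special words at their longest common \emph{suffix} $v$, giving $A$ a single continuation that wins $BA^{|v|}B$ outright, whereas you align at the longest common \emph{prefix} $q$ and win $A^{|q|}BA^{n-1-|q|}BA^m$ by forced play through the unique right-special words --- mirror images of the same argument.
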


\begin{proof}
For the second claim, we note that the condition that $|\B_n(X)| = n+1$ for all $n \in \N$ is equivalent to the following: for all $n \in \N$, there is exactly one $w \in \B_n(X)$ with $w0, w1 \in \B_{n+1}(X)$. Now, if we have $BA^kB \in \B(W(X))$ for some $k$, then there exist $u,v \in \B_k(X)$ such that $0ub, 1vb \in \B_{k+2}(X)$ for all $b \in \{0,1\}$, and the condition for the $|\B_n(X)|$ does not hold. Conversely, if there exist $w,w' \in \B_k(X)$ with $wb, w'b \in \B_{k+1}(X)$ and $w \neq w'$, then we can factorize the words as $w = u0v$ and $w' = u'1v$, so that $bvb' \in \B(X)$ for all $b, b' \in \{0,1\}$. This implies $BA^{|v|}B \in W(\B(X)) = \B(W(X))$.
\end{proof}

A simpler proof of the above result will be given in Section~\ref{sec:Entropy}. Subshifts $X \subset \{0,1\}^\N$ such that $|\B_n(X)| = n+1$ for all $n \in \N$ which are not eventually periodic are called \emph{Sturmian subshifts}. More often, \emph{Sturmian words} (words $x \in \{0, 1\}^\N$ such that $|\B_n(\overline{\mathcal{O}(x)})| = n + 1$ for all $n$) are studied instead of Sturmian subshifts. The subshift $\overline{\mathcal{O}(x)}$ is then automatically aperiodic, and thus a Sturmian subshift. Sturmian words have multiple characterizations and a rich theory, for which the interested reader can consult \cite{Lo02}. We rephrase Proposition~\ref{prop:Sturmian} in terms of infinite words.

\begin{corollary}
An infinite word $x \in \{0, 1\}^\N$ is Sturmian if and only if
\[ W(\overline{\mathcal{O}(x)}) = \B^{-1}(A^*BA^*) \subset \{A, B\}^\N. \]
\end{corollary}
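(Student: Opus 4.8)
The plan is to derive this corollary directly from Proposition~\ref{prop:Sturmian}, treating the orbit closure $\overline{\mathcal{O}(x)}$ as the subshift $X$ to which the proposition is applied. The corollary is essentially a restatement of the proposition in the special case $X = \overline{\mathcal{O}(x)}$, so the bulk of the work is bookkeeping about how the complexity function of $\overline{\mathcal{O}(x)}$ relates to the subword complexity of the word $x$ itself.

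First I would recall that $\B_n(\overline{\mathcal{O}(x)})$ is exactly the set of length-$n$ factors of $x$: every factor of $x$ appears in $\overline{\mathcal{O}(x)}$, and conversely, since $\overline{\mathcal{O}(x)}$ is the closure of the shift orbit of $x$, any word appearing in a point of $\overline{\mathcal{O}(x)}$ is a limit of factors of $x$ and hence is itself a factor of $x$. Thus $|\B_n(\overline{\mathcal{O}(x)})|$ equals the subword complexity $p_x(n)$ of $x$. By the very definition of a Sturmian word quoted just above the statement, $x$ is Sturmian precisely when $p_x(n) = n+1$ for all $n$, which is the condition $|\B_n(\overline{\mathcal{O}(x)})| = n+1$ appearing in Proposition~\ref{prop:Sturmian}.

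With this identification in hand, I would simply invoke Proposition~\ref{prop:Sturmian} with $X = \overline{\mathcal{O}(x)}$: it gives $W(\overline{\mathcal{O}(x)}) = \B^{-1}(A^*BA^*)$ if and only if $|\B_n(\overline{\mathcal{O}(x)})| = n+1$ for all $n$, which by the previous paragraph is equivalent to $x$ being Sturmian. One small point to confirm is that the definition of Sturmian word used here (low complexity, hence automatically aperiodic, giving a genuine Sturmian subshift) matches the hypotheses of the proposition, but the proposition makes no aperiodicity assumption — it characterizes all binary subshifts with $|\B_n| = n+1$ — so no extra argument is needed, and the implication runs cleanly in both directions.

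I do not expect a serious obstacle here, since the statement is a direct corollary. The only point requiring a moment's care is the equality $\B_n(\overline{\mathcal{O}(x)}) = \{\text{factors of } x \text{ of length } n\}$, which relies on the standard fact that passing to the orbit closure does not create new factors; this is a routine consequence of compactness and the continuity of the factor-extraction maps, and I would state it without belaboring the topology.
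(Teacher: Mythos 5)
Your proposal is correct and takes essentially the same route as the paper, which states this corollary without proof as an immediate rephrasing of Proposition~\ref{prop:Sturmian}: since the paper defines a Sturmian word directly by the condition $|\B_n(\overline{\mathcal{O}(x)})| = n+1$ for all $n$, the equivalence is just the proposition applied to $X = \overline{\mathcal{O}(x)}$. Your extra verification that $\B_n(\overline{\mathcal{O}(x)})$ coincides with the set of length-$n$ factors of $x$ is correct but not even needed under the paper's definition; it would only matter if Sturmian words were defined via the subword complexity of $x$ itself.
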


We note that the characterization of the condition $W(X) = \B^{-1}(A^*BA^*)$ fails in the two-directional case, as shown by the following result.

\begin{definition}
Let $\tau : S \to S^*$ be a function, and extend it to $S^*$ by $\tau(uv) = \tau(u) \tau(v)$. We call $\tau : S^* \to S^*$ a \emph{substitution}, and if $\tau(s)$ has the same length for every $s \in S$, we say $\tau$ is \emph{uniform}. If there exists $n \in \N$ such that $s'$ occurs in $\tau^n(s)$ for every $s, s' \in S$, we say $\tau$ is \emph{primitive}. The \emph{subshift $X_\tau$ of $\tau$} has the language $\{ w \in S^* \;|\; \exists n \in \N, s \in S : w \sqsubset \tau^n(s) \}$, and we say $\tau$ is \emph{periodic} if $X_\tau$ is.
\end{definition}

\begin{proposition}
The two-directional subshift of a primitive uniform aperiodic substitution $\tau$ on $\{0,1\}$ has the winning shift $\B^{-1}(A^*BA^*)$.
\end{proposition}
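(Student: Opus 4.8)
The plan is to combine the two-directional characterization recorded right after Proposition~\ref{prop:WDownwardClosed} with the fact that the subshift $X = X_\tau$ of a primitive uniform aperiodic substitution is minimal, aperiodic, and of linear factor complexity (indeed linearly recurrent). Write $W = W(X)$. By that characterization $a \in W$ if and only if every suffix $a_{[i,\infty)}$ lies in the one-directional winning shift $W(X_\N)$, and $W$ is downward closed. So it suffices to prove: (i) every $a$ with at most one $B$ lies in $W$; and (ii) every $a$ with at least two $B$'s does not. For (ii), downward closedness lets me reduce to an $a$ having \emph{exactly} two $B$'s, say at distance $d \geq 1$; the only suffixes that can fail $A$ are then those of the form $A^m B A^{d-1} B A\INF$ with $m \geq 0$, and I must exhibit one $m$ for which $B$ wins.

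Call $w \in \B(X)$ \emph{right-special} if $w0, w1 \in \B(X)$. For (i): aperiodicity gives $|\B_{n+1}(X)| > |\B_n(X)|$ by Morse--Hedlund, hence a right-special word of every length. In $A^m B A\INF$ player $A$ plays the letters of a right-special word of length $m$; whatever $B$ inserts stays in $\B(X)$, and since the language is right-extendable $A$ extends it to a point of $X_\N$. Thus all zero- and one-$B$ suffixes are won by $A$, giving $\B^{-1}(A^*BA^*) \subseteq W$. For the heart, (ii), I first record the exact winning condition: $A$ wins $A^m B A^{d-1} B A\INF$ if and only if there is a right-special word $p$ of length $m$ such that both $p0$ and $p1$ extend to right-special words of length $m+d$, the $d-1$ intermediate letters being $A$'s free moves and the word reached just before each $B$-turn having to be right-special. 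Equivalently, $A$ wins at length $m$ iff some pair of right-special words of length $N = m+d$ first differ at position $m$; so $B$ wins for some $m$ as soon as there is a length $N$ at which no two right-special words branch at position $N-d$.

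To produce such an $N$ I use the structure of right-special words. As $X$ is linearly recurrent, $s(n) = |\B_{n+1}(X)| - |\B_n(X)|$ is bounded and attains its liminf $K_0 \geq 1$ infinitely often, and there are only finitely many \emph{right-special rays} (one-sided sequences all of whose prefixes are right-special), say $\rho^{(1)}, \dots, \rho^{(K_0)}$, any two of which first differ at one of finitely many positions. The key structural input is that at each of the infinitely many lengths $n$ with $s(n) = K_0$, the right-special words of length $n$ are exactly the length-$n$ prefixes of these rays. Granting this, fix $d$ and choose $N$ with $s(N) = K_0$ and $N-d$ larger than every pairwise branch position of the rays. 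Then every right-special word of length $N$ is a ray prefix, so two of them branching at position $N-d$ would force two rays to branch there, which is impossible. Hence no branching occurs at $m = N-d$, player $B$ wins $A^{N-d} B A^{d-1} B A\INF$, and so $a \notin W$, which completes (ii) and the proposition.

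I expect the main obstacle to be exactly the emphasized structural claim: that at the low-complexity lengths the right-special words are precisely the ray prefixes (in particular that the number of rays equals $K_0$, so that no ``transient'' right-special word off the rays survives there). This is where the primitivity and uniformity are genuinely used, via Mossé recognizability and the self-similar scaling of special words by the substitution length $\ell$: one desubstitutes a putative branch at distance $d-1$ from the right end into a branch at distance roughly $(d-1)/\ell$, and controls the finitely many boundary cases to rule out persistent transient branches at the prescribed distance. The single-$B$ direction and the game-theoretic reductions are routine by comparison.
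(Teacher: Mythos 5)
Your game-theoretic reductions are sound: the two-directional characterization, the reduction to exactly two $B$'s at distance $d$, the one-$B$ direction via Morse--Hedlund, and the criterion ``$A$ wins $A^mBA^{d-1}BA\INF$ iff two right-special words of length $N=m+d$ first differ at position $m$'' are all correct, and this is indeed where the paper also implicitly stands (its two-directional hypothesis supplies the arbitrarily long $A$-prefix). But the heart of your argument has a genuine gap, and in fact two. First, your tree is oriented the wrong way: prefixes of right-special words need not be right-special; factor-closedness only gives that \emph{suffixes} of right-special words are right-special. So your ``rays'' must be left-infinite sequences all of whose finite suffixes are right-special, and once you correct this, your choice of $N$ does nothing: the branching your criterion demands sits at distance exactly $d$ from the \emph{right} end of the length-$N$ words, a fixed small distance, so taking $N-d$ large does not push it past the rays' branch points. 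Worse, you would then additionally need to exclude two distinct rays that agree at every distance greater than some $M$ from the right end and differ at distance $M$ --- such a pair would witness a win for $A$ at \emph{every} $m$ for the gap $d=M$, so this is not a technicality but exactly where aperiodicity and recognizability must act, and your proposal never addresses it. Second, even granting the orientation, your flagged structural claim (at the levels where $s(n)$ attains its liminf $K_0$, the right-special words are precisely ray suffixes, with exactly $K_0$ rays) does not follow from bounded $s(n)$: a tree of constant width $2$ with a single infinite branch and one transient leaf hanging off it at every level is combinatorially consistent, so ``no transients at liminf levels'' requires a real proof, which you explicitly do not give.

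For contrast, the paper's proof bypasses the special-factor apparatus entirely and is much shorter. It normalizes the substitution (via conjugation) so that $\tau(0)=0u$ and $\tau(1)=1v$, invokes Moss\'e's unilateral recognizability, and observes that any coordinate where $B$ can legally play both letters must be a $\tau$-block boundary: inside a block the next letter is forced by a bounded left context. Hence in a game on $A^{n\ell}BA^{i-1}BA^{\ell-1}$ with $n\ell$ at least the recognizability constant, both $B$-coordinates are cut points, the whole play desubstitutes, and a hypothetical $BA^{i-1}B\in\B(W(X_\tau))$ with $i$ minimal yields $BA^{j-1}B\in\B(W(X_\tau))$ with $j=i/\ell<i$ --- an infinite descent with no global claims about rays or complexity. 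Your instinct that recognizability and desubstitution-by-$\ell$ carry the load is right, but the paper applies them directly to the game positions rather than to the structure of right-special words; if you want to salvage your route, the two statements you must prove are precisely the no-transients claim and the impossibility of two rays agreeing outside a bounded suffix window, and each looks at least as hard as the descent argument itself.
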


\begin{proof}
Since $\tau$ is aperiodic, we have $\tau(0) \neq \tau(1)$, and we may assume without loss of generality that $\tau(0) = 0u$ and $\tau(1) = 1v$ for some $u,v \in S^{\ell-1}$ (if $\tau(0) = u0v$ and $\tau(1) = u1w$, we replace them with $0vu$ and $1wu$, respectively, and obtain the same subshift). From the results of \cite{Mo92}, we know that $\tau$ is \emph{unilaterally recognizable}, that is, there is $k \in \N$ such that for $x \in X_\tau$, the word $x_{[-k,-1]}$ uniquely determines whether or not $x_{[0,\infty)} = \tau(y_{[0,\infty)})$ for some $y \in X_\tau$.

Suppose now that $BA^{i-1}B \in \B(W(X_\tau))$ with $i \in \N$ minimal, and consider the game defined by $A^{n\ell}BA^{i-1}BA^{\ell-1}$, where $n\ell \geq k$. Here, $A$ plays a word $t \in \B_{n\ell}(X_\tau)$, after which $B$ can make a move, implying that $t = \tau(t')$ for some $t' \in \B(X_\tau)$ by the recognizability of $\tau$. Then $A$ continues, constructing either $t0r_0$ or $t1r_1$, then $B$ continues with $0$ or $1$. Again $t0r_0 = \tau(t'r_0')$ and $t1r_1 = \tau(t'r_1')$ for some $r_0', r_1' \in \B(X_\tau)$, and in particular $i \geq \ell$. Finally, $A$ finishes with either $u$ or $v$, resulting in one of
\[ t0r_00u, \quad t0r_01v, \quad t1r_10u, \quad t1r_11v. \]
Desubstituting by $\tau$, we have now shown that $t'r_0'b, t'r_1'b \in \B(X_\tau)$ for all $b \in \{0,1\}$, where $r_0'$ begins with $0$ and $r_1'$ with $1$. Since $|t'| = n$ can be arbitrarily large, and $|r'_b| < |br_b|$, we see that $BA^{j-1}B \in \B(W(X_\tau))$ for some $j < i$, a contradiction with the minimality of $i$.
\end{proof}

\begin{definition}
Let $K > 0$. A subshift $X \subset S^\I$ is \emph{linearly recurrent with constant $K$}, if for all $w \in \B(X)$, $x \in X$ and $i \in \I$, there exists $k < K|w|$ with $x_{[i+k, i+k+|w|-1]} = w$.
\end{definition}

Linearly recurrent subshifts were introduced in \cite{DuHoSk99}, and they have strong connections to the theory of substitutions. In particular, $X_\tau$ is known to be linearly recurrent for every primitive substitution $\tau$. We now show that the winning shift of a two-directional linearly recurrent subshift is also very small.

\begin{proposition}
\label{prop:LinearRecurrence}
Let $K > 0$, and let the subshift $X \subset S^\Z$ be linearly recurrent with constant $K$. Then for every $a \in \B(W(X))$ we have $|a|_B \leq \log_{|S|} (K(K+1))$.
\end{proposition}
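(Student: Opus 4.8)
The plan is to bound the number of $B$'s in a winning word $a \in \B(W(X))$ by exploiting the tension between the freedom that each of $B$'s turns forces into the game and the rigidity that linear recurrence imposes on the subshift $X$. The guiding intuition is this: each position where $B$ plays is a position where $A$ must be able to survive \emph{both} possible letters $B$ might pick. Since $X$ is binary-choice in nature (well, over alphabet $S$, so $|S|$ choices), each such $B$-position multiplies the number of distinct legal continuations that $A$ must simultaneously keep alive, yet linear recurrence caps how many distinct words of a given length can appear while remaining compatible with a bounded window. The goal is to turn ``$|a|_B$ large'' into ``too many distinct factors of some bounded length,'' contradicting the counting consequence of linear recurrence.

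First I would recall the standard consequence of linear recurrence with constant $K$: the factor complexity is bounded, $|\B_n(X)| \leq Kn$ for all $n$ (this is the usual complexity bound for linearly recurrent systems, which follows directly from the recurrence definition), and moreover every factor of length $n$ occurs in every window of length $K n$. The plan is to let $a \in \B(W(X))$ with $m = |a|_B$, say $a \in \{A,B\}^N$, and run the game under $A$'s winning strategy $s$ against \emph{all} possible plays of $B$. Consider the tree of plays: at each $A$-coordinate the word is determined by $s$ given the history, while at each $B$-coordinate the word branches into $|S|$ children (one per letter $B$ could legally be forced to consider). Because $s$ is winning, \emph{every} leaf of this branching — that is, every word $B$ can steer the play toward — lies in $\B(X)$. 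Since $B$ plays $m$ times, this produces at least $|S|^{m}$ distinct words of length $N$ in $\B(X)$, provided the branches genuinely diverge.

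The heart of the argument, and the step I expect to be the main obstacle, is controlling \emph{when} branches diverge and translating the count $|S|^m$ into a contradiction via the recurrence constant. The branches need not all produce distinct words, so I would argue more carefully: after each $B$-move, $A$ must be able to complete the word regardless of $B$'s choice, which means the current finite word admits $|S|$ distinct legal one-letter extensions at that moment (or more precisely that the suffix following each $B$-choice lies in a distinct ``survivable'' class). The cleanest route is to look only at the positions of the $B$'s: if $B$ plays at coordinates $i_1 < \dots < i_m$, then by playing adversarially $B$ forces $A$ into at least $|S|^{m}$ distinct length-$N$ words in $\B(X)$, one for each sequence of $B$'s choices. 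Comparing $|S|^m \leq |\B_N(X)| \leq K N$ is too weak directly, since $N$ can be large. The fix — and this is where the constant $K(K+1)$ enters — is to localize: linear recurrence forces the branching to happen within a bounded window. Specifically, two consecutive $B$-moves cannot be too far apart in a way that produces genuinely new complexity, because a factor of length $n$ already recurs within every window of length $Kn$; so the effective number of independent branchings is bounded by how many ``fresh'' factors of a controlled length can be squeezed in before recurrence forces repetition.

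Concretely, I would aim to show that each $B$-move, to be survivable, requires the presence of a factor realizing a genuine choice point, and that within the rigidity of linear recurrence the number of such independent choice points of bounded scale is at most $\log_{|S|}(K(K+1))$. The bound $K(K+1)$ strongly suggests the following mechanism: one factor $K$ comes from the complexity bound $|\B_n(X)| \leq K n$ at a suitably chosen length $n$, and the extra factor $(K+1)$ comes from the recurrence gap — the ratio between the window length guaranteeing a recurrence and the factor length, which is controlled by $K+1$ rather than $K$ due to an off-by-one in the recurrence window $k < K|w|$ versus needing $|w|$ extra symbols. So I would set $n$ to be the shortest length at which the $m$ $B$-choices must have fully diverged, estimate $|S|^m \leq |\B_n(X)| \leq K n$ while simultaneously bounding $n \leq (K+1)\cdot(\text{scale of one choice})$ via recurrence, and combine these to extract $|S|^m \leq K(K+1)$, i.e. $|a|_B = m \leq \log_{|S|}(K(K+1))$. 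The delicate part is choosing $n$ correctly and verifying that the forced branches really are distinct factors of length exactly $n$; getting the window bookkeeping to yield the clean product $K(K+1)$ rather than a messier constant is where the real work lies.
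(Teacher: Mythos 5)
Your opening move matches the paper's: run $A$'s winning strategy against all of $B$'s choice sequences, observe that distinct choice sequences first diverge at a $B$-coordinate and hence yield distinct plays, and conclude that $\B(X)$ contains $|S|^{|a|_B}$ distinct words; you also correctly recognize that comparing this against $|\B_N(X)| \leq KN$ is too weak. But the localization step you then gesture at is exactly the missing idea, and the route you sketch would not work. You propose to choose a scale $n$ at which ``the $m$ $B$-choices must have fully diverged'' and to bound $n \leq (K+1)\cdot(\text{scale of one choice})$ via recurrence; however, the $B$-coordinates of $a$ may be arbitrarily far apart (nothing in the hypothesis controls the gaps between $B$'s inside $a$), so no bounded scale captures all the branching, and the complexity bound $|\B_n(X)| \leq Kn$ is inherently length-dependent and can never produce a constant on its own. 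Your attribution of the two factors is also off: in the actual argument the factor $K$ comes from the recurrence window $k < K|w|$, not from the complexity function, and the factor $K+1$ comes from a nontrivial external ingredient that your sketch never mentions --- an aperiodic linearly recurrent subshift with constant $K$ is $(K+1)$-power free, i.e.\ $w^{K+1} \notin \B(X)$ for all $w$, which is Theorem~24 of Durand--Host--Skau \cite{DuHoSk99}. (The aperiodic reduction is harmless, since periodic $X$ has $W(X) = \{\INF A \INF\}$.)

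The paper's mechanism, which is what your sketch lacks, is this: pad the game on the left, playing $A^{n|a|}a$, so that all $|S|^{|a|_B}$ outcomes have the form $uv$ with a \emph{single long common prefix} $u \in \B_{n|a|}(X)$ determined by $A$'s strategy alone. Fix one point $x \in X$; linear recurrence places each outcome $uv$ (of length $(n+1)|a|$) at some starting position $k < K(n+1)|a|$ in $x$, and since the words $uv$ are distinct and of equal length, these positions are pairwise distinct. If $|a|_B > \log_{|S|}\bigl(K(K+1)(1+n^{-1})\bigr)$, the pigeonhole principle yields two occurrences at distance $d \leq n|a|/(K+1)$; because both occurrences begin with $u$, the prefix $u$ has period $d$, hence contains a $(K+1)$st power $w^{K+1}$ with $|w| = d$, contradicting power-freeness. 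Letting $n \to \infty$ then gives the clean constant $K(K+1)$. So the two ideas you would need to supply are (i) the power-freeness theorem, and (ii) the $A$-padding that converts ``many distinct words forced into one recurrence window'' into ``a short period of an arbitrarily long common prefix''; your tree-of-plays setup is fine, but by itself it only reaches the length-dependent count you had already rejected.
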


\begin{proof}
We assume without loss of generality that $X$ is aperiodic, since otherwise $W(X) = \{\INF A \INF\}$ by Proposition~\ref{prop:Periodic}. From \cite[Theorem 24]{DuHoSk99} we know that $X$ is $K+1$-power free, that is, $w^{K+1} \notin \B(X)$ for all $w \in S^*$. Fix $x \in X$. Suppose that $|a|_B > \log_{|S|} (K(K+1)(1+n^{-1}))$ for some $a \in \B(W(X))$ and $n \in \N$, and consider the game defined by $A^{n|a|}a$. The possible outcomes of the game are of the form $uv$, where $u \in \B_{n|a|}(X)$, and there are at least $K(K+1)(1+n^{-1})+1$ choices for $v$. For each such $v$, there exists $k \in [0,K(n+1)|a|-1]$ such that $x_{[k,k+(n+1)|a|-1]} = uv$, and in particular $x_{[k,k+n|a|-1]} = u$. These $k$ are also pairwise distinct, and by the pigeonhole principle, some pair $k, k'$ of them satisfy
\[ d = |k-k'| \leq \frac{K(n+1)|a|}{K(K+1)(1+n^{-1})} = \frac{n|a|}{K+1}. \]
But then $u_i = u_{i+d}$ for all $i$, and thus $u = w^{K+1}u'$ for some $w \in \B_d(X)$ and $u' \in \B(X)$, a contradiction.
\end{proof}

We show by example that Proposition~\ref{prop:LinearRecurrence} is optimal in the sense that a global bound for the number of $B$'s cannot be found.

\begin{example}
For all $k, N \in \N$ there exists a linearly recurrent subshift $X \subset \{0, \ldots, k-1\}^\Z$ such that $|a|_B \geq N$ for some $a \in \B(W(X))$.
\end{example}

\begin{proof}
Define $R = \{0, \ldots, k^N-1\}$, and let $w = 000102 \cdots 0(k^N-1) \in R^*$. Define the primitive uniform substitution $\tau$ on $R$ by $\tau(i) = iw0$ for all $i \in R$. Then $\tau^n(0)i \in \B(X_\tau)$ for all $n \in \N$ and $i \in R$. Apply to $X_\tau$ the morphism $\rho$ that replaces each letter $i \in R$ by its base-$k$ representation, padded to length $N$. Now, in the game defined by $A^{N|\tau^n(0)|}B^N$, $A$ wins by playing the word $\rho(\tau^n(0))$ during her turns, since this word can be followed by $\rho(i)$ for any $i \in R$. This shows that $\INF A B^N A \INF \in W(\rho(X_\tau))$, and $\rho(X_\tau)$ is linearly recurrent, since $X_\tau$ is.
\end{proof}

Finally, we show that Proposition~\ref{prop:LinearRecurrence} cannot be generalized to all minimal systems.

\begin{example}
For any nontrivial alphabet $S$, there exists a minimal two-directional subshift $X \subset S^\Z$ such that $W(X)$ is uncountable.
\end{example}

\begin{proof}
We inductively define finite sets of words $W^i \subset S^*$ as follows: $W^1 = S$, and if $W^i = \{w^{(i)}_1, \ldots, w^{(i)}_{k_i}\}$ in increasing lexicographical order, then
\[ W^{i+1} = \{ w^{(i)}_1 \cdots w^{(i)}_{k_i} w^{(i)}_j w^{(i)}_{j'} \;|\; j, j' \in [1, k_i] \}. \]
Define the subshift $X \subset S^\Z$ as the set of points whose every factor occurs in some of the $w^{(i)}_j$. This subshift is minimal: The word $w^{(i)}_j$ occurs in the word $w^{(i+1)}_{j'}$ for any $j'$, and every point in $X$ is clearly some concatenation of such words by the inductive definition of the sets $W^i$.

It now suffices to produce an element of $W(X)$ with infinitely many $B$'s. For this, consider the points
\[ \cdots (w^{(4)}_1 \cdots w^{(4)}_{k_4} (w^{(3)}_1 \cdots w^{(3)}_{k_3} (w^{(2)}_1 \cdots w^{(2)}_{k_2} (w^{(1)}_1 \cdots w^{(1)}_{k_1} w^{(1)}_1 w^{(1)}_{b_1}) w^{(2)}_{b_2}) w^{(3)}_{b_3}) w^{(4)}_{b_4}) \cdots \]
These points are in $X$ since the words in parentheses are elements of the sets $W^i$ for all choices of $b_i \in [1,k_i]$. On the other hand, these points show that $B$ can play infinitely many times after the left tail
\[ \cdots w^{(5)}_1 \cdots w^{(5)}_{k_5} w^{(4)}_1 \cdots w^{(4)}_{k_4} w^{(3)}_1 \cdots w^{(3)}_{k_3} w^{(2)}_1 \cdots w^{(2)}_{k_2} w^{(1)}_1 \cdots w^{(1)}_{k_1},  \]
since there is a choice of an element of $W^1 = S$ in every $w^{(i)}_{b_i}$.
\end{proof}

\section{Entropy}
\label{sec:Entropy}

In this section, we study how the entropies of a subshift $X \subset S^\I$ and its winning shift relate to each other. These results hold for both one- and two-directional subshifts. We first observe that if $W(X)$ has high entropy, then in a typical point of $W(X)$, $B$ is able to play quite often, but $A$ still wins. Since $B$ can play arbitrarily, the $B$-coordinates can thus be chosen arbitrarily, with the resulting configuration still in $X$. This gives a lower bound for the entropy of $X$, which we approximate in the following.

\begin{lemma}
\label{lem:DensityToEntropy}
Let $X \subset S^\I$ be a subshift. Denoting
\[ d = \limsup_{n \rightarrow \infty} \left( \max \left\{ \frac{|a|_B}{n} \;\middle|\; a \in \B_n(W(X)) \right\} \right), \]
we have $h(X) \geq d \log_2 |S|$.
\end{lemma}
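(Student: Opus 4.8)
We want to show that if $W(X)$ has configurations where the density of $B$'s is asymptotically at least $d$, then $X$ has entropy at least $d\log_2|S|$. The intuition given in the text is the right one: if $a \in \B_n(W(X))$ has many $B$'s, then $A$ has a winning strategy against all of $B$'s plays, so in particular the coordinates where $B$ plays can receive \emph{any} letter of $S$ and still yield a word in $\B(X)$. This produces many words in $\B(X)$, forcing the entropy up.

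**The plan.** The plan is to fix $n$ and pick $a \in \B_n(W(X))$ achieving (or nearly achieving) the maximum ratio $|a|_B/n$ in the definition of $d$; write $b = |a|_B$ for the number of $B$-coordinates. Since $a \in \B_n(W(X)) = W(\B_n(X))$ by Proposition~\ref{prop:WDownwardClosed}, player $A$ has a winning strategy $s$ in the ordered game $(S, n, \B_n(X), a)$. First I would argue that this single strategy, together with the freedom of $B$, generates at least $|S|^b$ distinct words in $\B_n(X)$. The key observation is that once $A$'s winning strategy $s$ is fixed, the outcome $p(G, s, s_B)$ is completely determined by $B$'s choices at the $b$ coordinates where $a_i = B$; moreover, for every one of the $|S|^b$ possible sequences of choices for $B$ at those coordinates, the resulting play lies in $\B_n(X)$ because $s$ is winning. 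Different choices by $B$ produce different words, since they already differ at the first coordinate where $B$ deviates. Hence $|\B_n(X)| \geq |S|^b = |S|^{|a|_B}$.

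**Passing to entropy.** Taking logarithms gives $\log_2 |\B_n(X)| \geq |a|_B \cdot \log_2|S|$, and dividing by $n$ yields $\frac{1}{n}\log_2|\B_n(X)| \geq \frac{|a|_B}{n}\log_2|S|$. Choosing, for each $n$, the word $a$ maximizing $|a|_B/n$ over $\B_n(W(X))$, the right-hand side equals $\left(\max_{a \in \B_n(W(X))} \frac{|a|_B}{n}\right)\log_2|S|$. Taking $\limsup_{n\to\infty}$ on both sides and using the definition of entropy $h(X) = \lim_n \frac{1}{n}\log_2|\B_n(X)|$ (which exists, so the $\limsup$ of the left-hand side is just $h(X)$), I obtain $h(X) \geq d\log_2|S|$, as desired. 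In the two-directional case the same argument applies verbatim using $\B_n(W(X))$ and $\B_n(X)$, since entropy and the winning-shift language are defined through finite factors in both settings.

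**Main obstacle.** The argument is essentially a counting bound, so there is little genuine difficulty; the one point requiring care is the claim that the $|S|^b$ plays produced by varying $B$'s letters at the $B$-coordinates are genuinely distinct words and all lie in $\B_n(X)$. Distinctness is immediate because two different choice-vectors for $B$ first differ at some $B$-coordinate, and the play agrees with that choice at that coordinate, so the words differ there. Membership in $\B_n(X)$ is exactly the statement that $s$ is a winning strategy. The only subtlety worth stating cleanly is that $A$'s moves at the $A$-coordinates are reactions dictated by $s$ and need not be constant across different plays, but this does not affect the count: the map from $B$'s $b$ free letters to the outcome is injective into $\B_n(X)$, which is all we need.
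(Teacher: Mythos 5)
Your proof is correct and takes essentially the same route as the paper's: fix a word $a$ of $\B(W(X))$ with (near-)maximal $B$-density, fix a winning strategy $s$ for $A$, and count the $|S|^{|a|_B}$ distinct plays obtained by varying $B$'s letters to conclude $|\B_n(X)| \geq |S|^{|a|_B}$, then pass to the $\limsup$. The only difference is cosmetic: you spell out the injectivity of the map from $B$'s choice vectors to outcomes, which the paper compresses into ``enumerating all the possible strategies $s_B$,'' and you take a per-$n$ maximizer where the paper uses an $\epsilon$-approximation.
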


\begin{proof}
Let $\epsilon > 0$ and $n \in \N$ be arbitrary, and let $a \in \B(W(X))$ be such that $|a| \geq n$ and $\frac{|a|_B}{|a|} \geq d - \epsilon$. Let $s$ be a winning strategy for $A$ in the game $G = (S, |a|, \B_{|a|}(X), a)$. By enumerating all the possible strategies $s_B$ for $B$ and the prefixes of the resulting plays $p(G, s, s_B)$, we find that $|\B_{|a|}(X)| \geq |S|^{|a|_B}$. This implies that
\[ \frac{1}{|a|} \log_2 |\B_{|a|}(X)| \geq \frac{|a|_B}{|a|} \log_2 |S| \geq (d - \epsilon) \log_2 |S|. \]
Since $|a|$ can be chosen arbitrarily large and $\epsilon$ arbitrarily small, the claim follows. 
\end{proof}

In what follows, we will several times need the well-known approximation formula
\begin{equation}
\label{eq:BinomApprox}
\binom{n}{m} \leq \left(\frac{ne}{m}\right)^m
\end{equation}
for the binomial coefficient, which holds for all $m, n \in \N$.

\begin{lemma}
\label{lem:BinomApprox}
Let $1 < k < 2$, and let $1 > \epsilon > 0$ be such that $\left(\frac{e}{\epsilon}\right)^\epsilon < k$. Then $\binom{n}{\lfloor \epsilon n \rfloor} \leq k^n$ for all $n$ large enough.
\end{lemma}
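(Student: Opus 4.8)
The plan is to use the approximation formula \eqref{eq:BinomApprox} directly, since the binomial coefficient $\binom{n}{\lfloor \epsilon n \rfloor}$ is close to $\binom{n}{\epsilon n}$ and the bound $\left(\frac{e}{\epsilon}\right)^\epsilon < k$ is clearly tailored to match the right-hand side of \eqref{eq:BinomApprox} after substituting $m = \epsilon n$. First I would apply \eqref{eq:BinomApprox} with $m = \lfloor \epsilon n \rfloor$, obtaining
\[ \binom{n}{\lfloor \epsilon n \rfloor} \leq \left( \frac{ne}{\lfloor \epsilon n \rfloor} \right)^{\lfloor \epsilon n \rfloor}. \]
The goal is then to show that the right-hand side is at most $k^n$ for large $n$. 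Taking logarithms (base $2$), it suffices to prove that $\lfloor \epsilon n \rfloor \log_2\!\left( \frac{ne}{\lfloor \epsilon n \rfloor} \right) \leq n \log_2 k$ eventually.

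The key manipulation is to control the floor function. Writing $\lfloor \epsilon n \rfloor = \epsilon n - \theta_n$ with $0 \leq \theta_n < 1$, I would note that $\frac{ne}{\lfloor \epsilon n \rfloor} \to \frac{e}{\epsilon}$ as $n \to \infty$, and more precisely that $\frac{ne}{\lfloor \epsilon n \rfloor} \leq \frac{e}{\epsilon} \cdot \frac{\epsilon n}{\epsilon n - 1}$, which tends to $\frac{e}{\epsilon}$ from above. Since the exponent satisfies $\lfloor \epsilon n \rfloor \leq \epsilon n$, one gets
\[ \left( \frac{ne}{\lfloor \epsilon n \rfloor} \right)^{\lfloor \epsilon n \rfloor} \leq \left( \frac{e}{\epsilon} \cdot \frac{\epsilon n}{\epsilon n - 1} \right)^{\epsilon n}. \]
The right-hand side equals $\left( \left(\frac{e}{\epsilon}\right)^{\epsilon} \cdot \left(\frac{\epsilon n}{\epsilon n - 1}\right)^{\epsilon} \right)^n$. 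The base converges to $\left(\frac{e}{\epsilon}\right)^{\epsilon}$, which by hypothesis is strictly less than $k$, so for $n$ large enough the correction factor $\left(\frac{\epsilon n}{\epsilon n - 1}\right)^{\epsilon}$ is small enough that the whole base stays below $k$, giving $\binom{n}{\lfloor \epsilon n \rfloor} \leq k^n$.

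The main obstacle I anticipate is purely bookkeeping: carefully handling the floor so that the exponent and the fraction inside the parentheses are each bounded in the correct direction (the exponent $\lfloor \epsilon n \rfloor \leq \epsilon n$ must shrink the quantity, while the base $\frac{ne}{\lfloor \epsilon n \rfloor} \geq \frac{e}{\epsilon}$ grows it), and then confirming that the base $\left(\frac{e}{\epsilon}\right)^{\epsilon}$ being strictly below $k$ provides enough slack to absorb the vanishing correction term $\left(\frac{\epsilon n}{\epsilon n - 1}\right)^{\epsilon} \to 1$. One technical point worth care is that when $1 < k < 2$, the base $\frac{ne}{\lfloor \epsilon n \rfloor}$ is at least $1$, so raising to the smaller exponent $\lfloor \epsilon n \rfloor$ rather than $\epsilon n$ genuinely decreases the value; this justifies the exponent replacement cleanly. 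No deep idea is required — the statement is essentially a quantitative limit computation, and the strict inequality in the hypothesis $\left(\frac{e}{\epsilon}\right)^{\epsilon} < k$ is exactly what converts the asymptotic statement into a valid bound for all sufficiently large $n$.
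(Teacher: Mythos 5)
Your proposal is correct and follows essentially the same route as the paper's proof: both apply the bound \eqref{eq:BinomApprox} with $m = \lfloor \epsilon n \rfloor$, enlarge the exponent to $\epsilon n$ (valid since the base exceeds $1$), and absorb the floor-induced correction using $\lfloor \epsilon n \rfloor \geq \epsilon n - 1$ together with the strict slack in $\left(\frac{e}{\epsilon}\right)^\epsilon < k$. The only cosmetic difference is that the paper bounds the correction term $\left(\frac{n\epsilon}{\lfloor n\epsilon \rfloor}\right)^{n\epsilon}$ by a constant (it tends to $e$), whereas you keep a factor $\left(\frac{\epsilon n}{\epsilon n - 1}\right)^\epsilon \to 1$ inside the base; both are valid bookkeeping for the same argument.
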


\begin{proof}
We first use \eqref{eq:BinomApprox} to show that
\[ \binom{n}{\lfloor n \epsilon \rfloor}
\leq \left(\frac{ne}{\lfloor n \epsilon \rfloor}\right)^{\lfloor n \epsilon \rfloor}
\leq \left(\frac{ne}{\lfloor n \epsilon \rfloor}\right)^{n \epsilon}
= \left(\frac{n \epsilon}{\lfloor n \epsilon \rfloor}\right)^{n \epsilon} \left(\frac{e}{\epsilon}\right)^{n \epsilon}. \]
Then, we know that $\left(\frac{x}{\lfloor x \rfloor}\right)^x \leq \left(\frac{x}{x - 1}\right)^x = \left(1 - \frac{1}{x}\right)^{-x} \longrightarrow e$ as $x \longrightarrow \infty$, from which the claim follows for large enough $n$.
\end{proof}

\begin{proposition}
\label{prop:Entropy}
Let $X \subset S^\I$ be a subshift with $h(W(X)) \geq \log_2 k$, and let $1 > \epsilon > 0$ be such that $\left(\frac{2e}{\epsilon}\right)^\epsilon < k$. Then $h(X) \geq \epsilon \log_2 |S|$. In particular, if $h(W(X)) > 0$, then $h(X) > 0$.
\end{proposition}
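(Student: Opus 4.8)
The plan is to route everything through Lemma~\ref{lem:DensityToEntropy}, which already packages the conceptual content — that a high density of $B$'s in $W(X)$ forces entropy in $X$. Writing $d = \limsup_{n} \max\{ |a|_B/n \mid a \in \B_n(W(X))\}$, that lemma gives $h(X) \geq d \log_2 |S|$, so it suffices to prove $d \geq \epsilon$, whence $h(X) \geq \epsilon \log_2 |S|$ follows at once. I would establish $d \geq \epsilon$ by contradiction, assuming $d < \epsilon$ and counting.

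From $d < \epsilon$, the definition of $\limsup$ yields an $N$ such that for every $n \geq N$ each $a \in \B_n(W(X))$ satisfies $|a|_B < \epsilon n$, hence $|a|_B \leq \lfloor \epsilon n \rfloor$. Thus $\B_n(W(X))$ is contained in the set of binary words of length $n$ having at most $\lfloor \epsilon n \rfloor$ occurrences of $B$, so $|\B_n(W(X))| \leq \sum_{j=0}^{\lfloor \epsilon n \rfloor} \binom{n}{j}$. On the other hand, $W(X)$ is a subshift (Proposition~\ref{prop:WDownwardClosed} in the one-directional case, and by definition in the two-directional case), so $\log_2 |\B_n(W(X))|$ is subadditive and $h(W(X)) = \inf_n \frac1n \log_2 |\B_n(W(X))|$. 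Consequently $\frac1n \log_2 |\B_n(W(X))| \geq h(W(X)) \geq \log_2 k$ for every $n$, i.e. $|\B_n(W(X))| \geq k^n$. Combining the two estimates, $k^n \leq \sum_{j=0}^{\lfloor \epsilon n \rfloor} \binom{n}{j}$ would hold for all large $n$, and the whole proof reduces to bounding this partial binomial sum below $k^n$.

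For the estimate, note first that since $W(X)$ is a binary subshift we have $h(W(X)) \leq 1$ and hence $k \leq 2$; together with $(2e/\epsilon)^\epsilon < k$ this forces $\epsilon < 1/2$. Then for $j \leq \lfloor \epsilon n \rfloor \leq n/2$ the binomials $\binom{n}{j}$ increase in $j$, so the sum is at most $(n+1)\binom{n}{\lfloor \epsilon n \rfloor}$. Applying \eqref{eq:BinomApprox} with $\lfloor \epsilon n \rfloor \geq \epsilon n / 2$ (valid once $n \geq 2/\epsilon$) and $\lfloor \epsilon n \rfloor \leq \epsilon n$ gives $\binom{n}{\lfloor \epsilon n \rfloor} \leq (ne/\lfloor \epsilon n \rfloor)^{\lfloor \epsilon n \rfloor} \leq (2e/\epsilon)^{\epsilon n}$, so the sum is at most $(n+1)(2e/\epsilon)^{\epsilon n}$. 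Because $(2e/\epsilon)^\epsilon < k$, the quantity $(n+1)\bigl((2e/\epsilon)^\epsilon / k\bigr)^n$ tends to $0$, so $(n+1)(2e/\epsilon)^{\epsilon n} < k^n$ for all large $n$ — contradicting $k^n \leq \sum_{j=0}^{\lfloor \epsilon n \rfloor}\binom{n}{j}$. (Alternatively one may invoke Lemma~\ref{lem:BinomApprox} with a value $k' \in ((e/\epsilon)^\epsilon, k)$ and absorb the factor $n+1$ using $k' < k$; the constant $2e/\epsilon$ is simply what the self-contained floor bound produces.) This yields $d \geq \epsilon$ and therefore $h(X) \geq \epsilon \log_2 |S|$.

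The final assertion follows by specialization: if $h(W(X)) > 0$, set $k = 2^{h(W(X))} > 1$ and pick $\epsilon > 0$ small enough that $(2e/\epsilon)^\epsilon < k$, which is possible since $(2e/\epsilon)^\epsilon \to 1$ as $\epsilon \to 0^+$; the main inequality then gives $h(X) \geq \epsilon \log_2 |S| > 0$ for a nontrivial alphabet. I expect the only real subtlety to be bookkeeping rather than ideas, since the substantive step is quarantined in Lemma~\ref{lem:DensityToEntropy}: the one point that must be handled carefully is keeping the polynomially many terms of the binomial sum and the floor corrections strictly below the exponential threshold $k^n$, which is precisely the slack that the strict inequality $(2e/\epsilon)^\epsilon < k$ is designed to supply.
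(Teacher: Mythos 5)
Your proof is correct and takes essentially the same route as the paper: both reduce the statement to showing that the limsup $B$-density $d$ in $W(X)$ is at least $\epsilon$ by a counting contradiction based on \eqref{eq:BinomApprox} against the lower bound $|\B_n(W(X))| \geq k^n$ forced by $h(W(X)) \geq \log_2 k$, and then invoke Lemma~\ref{lem:DensityToEntropy}. Your bookkeeping differs only cosmetically --- you bound $|\B_n(W(X))|$ by $\sum_{j \leq \lfloor \epsilon n \rfloor} \binom{n}{j} \leq (n+1)\binom{n}{\lfloor \epsilon n \rfloor}$ with a direct floor estimate where the paper uses $\binom{n}{d_n}2^{d_n}$ together with the monotonicity of $\left(\frac{2e}{\epsilon}\right)^\epsilon$, and you handle $k = 2$ uniformly where the paper treats it as a separate trivial case --- and both variants are valid.
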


\begin{proof}
First, note that $W(X)$ is binary, so $1 \leq k \leq 2$. If we have $k = 2$, then $W(X) = \{A, B\}^\Z$ and $X = S^\Z$, and the claim holds. Suppose then that $k < 2$, and let $n \in \N$ be so large that $\binom{n}{\lfloor \epsilon n \rfloor} \leq k^n$ (given by Lemma~\ref{lem:BinomApprox}). Let also $\left(\frac{2e}{\epsilon}\right)^\epsilon < \ell < k$. If we denote $d_n = \max \{ |a|_B \;|\; a \in \B_n(W(X)) \}$ and suppose $d_n \leq \epsilon n$, we get an upper bound for $|\B_n(W(X))|$ as follows. We choose $d_n$ coordinates from the interval $[0, n-1]$, and then choose a subset of these to contain the symbol $B$, setting every other coordinate to $A$. Using~\eqref{eq:BinomApprox}, we obtain the bound
\[ |\B_n(W(X))| \leq \binom{n}{d_n} 2^{d_n} \leq \left(\frac{2e}{\epsilon}\right)^{\epsilon n} < \ell^n, \]
since the function $\left( \frac{2e}{\epsilon} \right)^\epsilon$ is increasing on the open interval $(0,1)$ (its derivative, $\left( \frac{2e}{\epsilon} \right)^\epsilon (\log \frac{2}{\epsilon} - 1)$, is positive). Since $h(W(X)) \geq \log_2 k$, we must thus have $d_n > \epsilon n$ for infinitely many $n$, so that
\[ \limsup_{n \rightarrow \infty} \left( \max \left\{ \frac{|a|_B}{n} \;\middle|\; a \in \B_n(W(X)) \right\} \right) \geq \epsilon. \]
Then, Lemma~\ref{lem:DensityToEntropy} gives the claim.
\end{proof}

In Figure~\ref{fig:Graph} we have plotted the bound given by the above proposition.

In the binary case, the entropy of a subshift and its winning shift are in fact equal. This actually follows from the stronger result that in binary games of finite length, the target set and winning set are of equal size. We have already seen this phenomenon in Example~\ref{ex:FiniteWinningSet} and Example~\ref{ex:Reversals}.

\begin{proposition}
\label{prop:SameSize}
For all $n \in \N$ and $L \subset \{0,1\}^n$, we have $|L| = |W(L)|$.
\end{proposition}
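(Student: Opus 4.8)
The plan is to argue by induction on $n$, exploiting the recursive structure of the game with respect to the first letter played. The base case $n = 0$ is immediate: the only turn order is the empty word $\lambda$, and $A$ wins precisely when $\lambda \in L$, so that $|W(L)| = |L| \in \{0,1\}$.

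For the inductive step I fix $L \subset \{0,1\}^n$ with $n \geq 1$ and split it according to its first letter, setting $L_d = \{ w \in \{0,1\}^{n-1} \mid dw \in L \}$ for $d \in \{0,1\}$, so that $|L| = |L_0| + |L_1|$. Every turn order factorizes as $a = cb$ with $c \in \{A,B\}$ and $b \in \{A,B\}^{n-1}$, and the first move reduces the game to one of the two subgames with target $L_0$ or $L_1$. The two characterizations I would verify directly from the definition of a winning strategy are as follows. If $c = A$, then $A$ chooses the first letter, and she wins exactly when she can steer into a winning subgame, so $Ab \in W(L) \Leftrightarrow b \in W(L_0) \cup W(L_1)$. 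If $c = B$, then $B$ chooses the first letter, and $A$ wins exactly when she prevails regardless of which subgame $B$ forces, so $Bb \in W(L) \Leftrightarrow b \in W(L_0) \cap W(L_1)$.

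Granting these, the size of $W(L)$ splits along the first symbol as
\[ |W(L)| = |W(L_0) \cup W(L_1)| + |W(L_0) \cap W(L_1)|. \]
This is the crux, and the one place where binarity of \emph{both} alphabets is essential: the inclusion-exclusion identity $|U \cup V| + |U \cap V| = |U| + |V|$ collapses the right-hand side to $|W(L_0)| + |W(L_1)|$. Applying the induction hypothesis to $L_0$ and $L_1$, both subsets of $\{0,1\}^{n-1}$, then yields $|W(L)| = |L_0| + |L_1| = |L|$, which closes the induction.

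The main obstacle is not any delicate estimate but rather making the union/intersection characterizations fully rigorous: one must check that a winning strategy for the length-$n$ game restricts to a winning strategy for the appropriate length-$(n-1)$ subgame and conversely, taking particular care that in the case $c = A$ the first move of $A$ is precisely the choice of which subgame to enter, while in the case $c = B$ the move is adversarial. Once this bookkeeping is in place, the inclusion-exclusion step does all the real work, and it is exactly the coincidence that $A$ chooses between two letters (a union) whereas $B$ can force either of two letters (an intersection) that makes the identity $|L| = |W(L)|$ hold for the binary alphabet.
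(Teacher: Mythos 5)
Your proof is correct and takes essentially the same route as the paper's: the same first-letter splitting $L_d = \{w \mid dw \in L\}$, the same characterizations $Ab \in W(L) \Leftrightarrow b \in W(L_0) \cup W(L_1)$ and $Bb \in W(L) \Leftrightarrow b \in W(L_0) \cap W(L_1)$, and the same inclusion--exclusion collapse (the paper just writes the union term as $|W(L_0)| + |W(L_1)| - |W(L_0) \cap W(L_1)|$ instead of $|W(L_0) \cup W(L_1)|$). Your observation that the identity hinges on $A$'s choice producing a union of exactly the two sets whose intersection $B$'s choice produces is precisely the mechanism at work in the paper's argument as well.
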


\begin{proof}
We prove this by induction, starting with the case $n = 1$, which is easily seen true. Suppose then that $n > 1$, and let $L_c = \{ w \in \{0,1\}^{n-1} \;|\; cw \in L \}$ for $c \in \{0,1\}$. We clearly have $L = 0L_0 \cup 1L_1$, and by the induction hypothesis, also $|W(L_c)| = |L_c|$ holds for $c \in \{0,1\}$.

Let $a \in W(L)$, and suppose that $a_0 = A$. If $A$ has a winning strategy in which she starts with $c \in \{0,1\}$, then $a_{[1,n-1]} \in W(L_c)$. Conversely, if $a_{[1,n-1]} \in W(L_c)$, then $A$ has a winning strategy that starts with $c$, and then follows the strategy for $a_{[1,n-1]}$. Thus
\[ |\{ a \in W(L) \;|\; a_0 = A \}| = |W(L_0)| + |W(L_1)| - |W(L_0) \cap W(L_1)|. \]
On the other hand, if $a_0 = B$, then $a_{[1,n-1]}$ must be in $W(L_0) \cap W(L_1)$ and the converse also holds, so
\[ |\{ a \in W(L) \;|\; a_0 = B \}| = |W(L_0) \cap W(L_1)|. \]
All in all, we have that $|W(L)| = |W(L_0)| + |W(L_1)| = |L_0| + |L_1| = |L|$, and the claim is proved.
\end{proof}

\begin{corollary}
\label{cor:BinaryEntropy}
If $X \subset S^\I$ is a binary subshift, then $h(X) = h(W(X))$.
\end{corollary}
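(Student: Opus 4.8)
The plan is to reduce the statement to the finite cardinality identity of Proposition~\ref{prop:SameSize}, applied length by length. Since $X$ is binary, its alphabet is $S = \{0,1\}$, and the crucial observation will be that the block counts of $X$ and of $W(X)$ agree exactly at every finite length. First I would extract from Proposition~\ref{prop:WDownwardClosed} the graded version of the language equality it proves: that result gives $\B(W(X)) = W(\B(X))$, and since $W(\B(X)) = \bigcup_{n \in \N} W(\B_n(X))$ with each $W(\B_n(X))$ living in $\{A,B\}^n$, intersecting with the words of length $n$ yields $\B_n(W(X)) = W(\B_n(X))$ for every $n$.

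Next I would invoke Proposition~\ref{prop:SameSize}. Because $X$ is binary, each finite block set $\B_n(X)$ is a subset of $\{0,1\}^n$, so the proposition applies and gives $|W(\B_n(X))| = |\B_n(X)|$. Combining this with the previous step, I obtain $|\B_n(W(X))| = |\B_n(X)|$ for all $n \in \N$; that is, the two subshifts have pointwise equal complexity functions.

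Finally I would feed this into the definition of entropy. Since $h(X) = \lim_{n \rightarrow \infty} \frac{1}{n} \log_2 |\B_n(X)|$ and the same formula computes $h(W(X))$ from the block counts $|\B_n(W(X))|$, the equality of block counts at each $n$ forces the two limits to coincide, giving $h(X) = h(W(X))$.

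The main obstacle is minor and lies entirely in the first step: one must check that the language-level identity $\B(W(X)) = W(\B(X))$ from Proposition~\ref{prop:WDownwardClosed} refines to the length-graded identity $\B_n(W(X)) = W(\B_n(X))$, which follows from the fact that $W$ on a language is defined as a length-disjoint union of the $W(\B_n(X))$. The hypothesis that $X$ is binary is then exactly what is needed to apply Proposition~\ref{prop:SameSize}, whose conclusion $|L| = |W(L)|$ is established only for $L \subset \{0,1\}^n$. Once the graded identity and the counting identity are in hand, the remainder is a direct substitution into the entropy limit.
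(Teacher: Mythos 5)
Your proposal follows exactly the route the paper intends: Corollary~\ref{cor:BinaryEntropy} is stated as an immediate consequence of Proposition~\ref{prop:SameSize}, via the length-graded identity $\B_n(W(X)) = W(\B_n(X))$ (which, for one-directional $X$, does follow from Proposition~\ref{prop:WDownwardClosed} together with Lemmas~\ref{lem:FactorClosed} and~\ref{lem:RightExtendable}, since $W(\B(X)) = \bigcup_n W(\B_n(X))$ is length-disjoint and every word of $W(\B_n(X))$ occurs in a point of $W(X)$ by right extendability), followed by substitution into the entropy limit. For $\I = \N$ your argument is complete and matches the paper.

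One caveat: the statement is for $X \subset S^\I$, and for $\I = \Z$ your key graded identity is actually false in general. The two-directional winning shift is defined as $\X_F$ with $F = \{A,B\}^* - W(\B(X))$, and Proposition~\ref{prop:WDownwardClosed} is stated only for one-directional subshifts; since $W(\B(X))$ need not be left extendable, one only gets $\B_n(W(X)) \subset W(\B_n(X))$. The paper's own Example~\ref{ex:WinningShifts} exhibits this: for $Z = \B^{-1}((01+0001)^*)$, the word $BABA$ lies in $W(\B(Z))$ (as witnessed by $BABA\INF \in W(Z_\N)$) but not in $\B(W(Z)) = \B(\B^{-1}((AB(AA)^+)^*))$. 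Your inclusion still yields $h(W(X)) \leq h(X)$ two-directionally, but the reverse inequality needs an additional step, e.g.\ observing that the two-directional $W(X)$ has the same language as the eventual image $\bigcap_k \sigma^k(Y)$ of the one-directional subshift $Y$ with $\B(Y) = W(\B(X))$, and that $h(Y)$ equals the entropy of this eventual image (all shift-invariant measures on $Y$ are supported there, so the variational principle applies). To be fair, the paper itself states the corollary without proof and glosses over this same two-directional point, so your write-up is at the paper's level of rigor; but if you want the claim for $\I = \Z$ as stated, this extra argument should be supplied.
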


In a sense, the winning operator $W$ `rearranges' a binary language or subshift, so that it becomes downward closed. However, no natural bijection between even a finite constant-length language and its winning set seems to exist. Note that Proposition~\ref{prop:Sturmian} follows as a corollary: $\B^{-1}(A^*)$ is the only downward closed unary subshift, and $\B^{-1}(A^*BA^*)$ is the only downward closed subshift with exactly $n+1$ words of length $n$ for all $n \in \N$. This shows that there certainly is no natural bijection between a binary subshift and its winning shift, as Sturmian subshifts are uncountable but their winning shifts are not. Using Proposition~\ref{prop:SameSize}, we also obtain a slight simplification of the classical proof for the fact that if a subshift $X \subset \{0,1\}^\N$ satisfies $|\B_n(X)| \leq n$ for some $n \in \N$, then $X$ is eventually periodic: If this is the case, then for all $a \in W(X)$ and $i \geq n$ we have $a_i = A$. This implies that every $x \in X$ is determined by $x_{[0,n-1]}$, so $X$ is finite, hence eventually periodic.

Recall once again the even shift $X$ of Example~\ref{ex:EvenShift}, with winning shift $Y = \B^{-1}(A^*BB(A^+B)^*)$. We noted earlier that this winning shift is almost equal to the golden mean shift $Z = \B^{-1}((A^+B)^*)$, and it is easy to see that all three shifts have the same entropy, $\log_2 \frac{1 + \sqrt{5}}{2}$.

With a more general type of game, we show that every subshift $X \subset S^\I$ can be rearranged into a downward closed subshift with the same entropy. We will then use this construction to obtain deeper results about the connection of the entropies $h(X)$ and $h(W(X))$.

\begin{definition}
Let $n \in \N \cup \{\N\}$, and let $X \subset S^n$ (a subshift if $n = \N$), and let $a \in [1,|S|]^n$. We define another game in which $A$ and $B$ build a word $w \in S^n$. At each coordinate $i$, $A$ chooses a set $S_i \subset S$ with $|S_i| = a_i$, and $B$ chooses the symbol $w_i \in S_i$. We say $A$ wins the game if $w \in X$. We then denote $\tilde W(X) = \{ a \in [1,|S|]^n \;|\; \mbox{$A$ has a winning strategy on $a$} \}$, and call it the \emph{counting winning set} of $X$. As before, we define $\tilde W(L) = \bigcup_{n \in \N} \tilde W(S^n \cap L)$ for a language $L \subset S^*$, and prove that if $X \subset S^\N$ is a subshift, then so is $\tilde W(X)$, and $\B(\tilde W(X)) = \tilde W(\B(X))$. We then define counting winning sets for two-directional subshifts analogously to the winning sets.
\end{definition}

If $|S| = 2$, then $W(X) = \tilde W(X)$ up to renaming the symbols. The following proposition is a generalization of Corollary~\ref{cor:BinaryEntropy}.

\begin{proposition}
\label{prop:IlkkasGeneralization}
If $n \in \N$ and $L \subset S^n$, then $|\tilde W(L)| = |L|$ and $W(\tilde W(L)) = W(L)$.
\end{proposition}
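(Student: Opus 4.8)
The statement splits into two independent claims, and I would prove them in turn. For the cardinality equality $|\tilde W(L)| = |L|$, the plan is to mimic the induction of Proposition~\ref{prop:SameSize} on $n$, but to replace the binary case-split on $a_0$ by a counting argument. The base case $n = 1$ is immediate: for $L \subset S$, player $A$ wins the one-coordinate counting game with parameter $a_0$ exactly when she can offer $B$ a size-$a_0$ set contained in $L$, i.e.\ when $a_0 \leq |L|$, so $\tilde W(L) = \{1, \ldots, |L|\}$ and the cardinalities match.

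For the inductive step, I would write $L_c = \{ w \in S^{n-1} \mid cw \in L\}$ for $c \in S$, so that $L = \bigcup_{c \in S} c L_c$ is a disjoint union and $|L| = \sum_c |L_c|$. For a suffix $b \in [1,|S|]^{n-1}$, set $N(b) = |\{ c \in S \mid b \in \tilde W(L_c)\}|$. The key observation is that $a_0 b \in \tilde W(L)$ holds precisely when $A$ can offer a size-$a_0$ set of ``good'' first letters, i.e.\ when $a_0 \leq N(b)$; hence exactly $N(b)$ values of $a_0 \in [1,|S|]$ extend $b$ into $\tilde W(L)$. Summing over $b$ and swapping the order of summation gives $|\tilde W(L)| = \sum_b N(b) = \sum_c |\tilde W(L_c)| = \sum_c |L_c| = |L|$, where the penultimate equality is the induction hypothesis.

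For the identity $W(\tilde W(L)) = W(L)$, the plan rests on the monotonicity of the counting game. First I would check that $\tilde W(L)$ is downward closed in $[1,|S|]^n$ under the coordinatewise order: if $A$ wins with parameters $a$, she also wins with any $a' \leq a$ by offering, at each coordinate, a size-$a'_i$ subset of the set her $a$-strategy would offer, thereby only restricting $B$. Given this, in the meta-game defining $W(\tilde W(L))$ (played over the alphabet $[1,|S|]$, with $A$ trying to land the built vector in $\tilde W(L)$), $A$'s dominant move at an $A$-coordinate is $1$ and $B$'s at a $B$-coordinate is $|S|$. Making this precise shows that for a turn order $\alpha \in \{A,B\}^n$, we have $\alpha \in W(\tilde W(L))$ if and only if the extremal vector $a^\alpha$, defined by $a^\alpha_i = 1$ when $\alpha_i = A$ and $a^\alpha_i = |S|$ when $\alpha_i = B$, lies in $\tilde W(L)$.

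Finally I would identify the counting game with parameter $a^\alpha$ with the ordinary game of turn order $\alpha$: a coordinate with $a^\alpha_i = 1$ forces $A$ to offer a singleton, so she simply picks the letter, while a coordinate with $a^\alpha_i = |S|$ forces $A$ to offer all of $S$, so $B$ picks freely. These are exactly the roles of $A$ and $B$ in the game $(S, n, L, \alpha)$, with the same winning condition $w \in L$, so $a^\alpha \in \tilde W(L)$ if and only if $\alpha \in W(L)$. Chaining the two equivalences yields $W(\tilde W(L)) = W(L)$. I expect the conceptual crux to be the second claim: recognizing that downward-closedness of $\tilde W(L)$ collapses the meta-game to evaluating the single extremal point $a^\alpha$, after which the identification with the original game is a matter of unwinding definitions. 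The first claim is a routine induction once the count $N(b)$ is introduced; the only care needed there is the summation swap.
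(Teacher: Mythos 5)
Your proof is correct and follows essentially the same route as the paper's: the same induction on $n$ with the decomposition $L = \bigcup_{c \in S} cL_c$ and the layer-cake count $|\tilde W(L)| = \sum_b N(b) = \sum_c |\tilde W(L_c)|$ for the first claim, and the same reduction of $W(\tilde W(L))$ to the extremal vector ($A$ plays $1$, $B$ plays $|S|$) via downward closedness for the second. If anything, you supply slightly more detail than the paper, which merely asserts the downward closedness of $\tilde W(L)$ that you verify explicitly.
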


\begin{proof}
For $n \leq 1$ this is trivial, so suppose $n > 1$. Let $L_c = \{ w \in S^{n-1} \;|\; cw \in L \}$. Then $L = \bigcup_{c \in S} L_c$, and $|L_c| = |\tilde W(L_c)|$ by the induction hypothesis.

Let $a \in \tilde W(L)$, and denote $k = a_0$. Then there exists a set $R \subset S$ with $|R| = k$ such that $a_{[1,n-1]} \in \tilde W(L_c)$ for all $c \in R$, so that $A$ can win on the suffixes. The converse also holds, and thus
\[ \left|\Set{ a \in \tilde W(L) \;|\; a_0 = k } \right| = \left|\Set{ a \in S^{n-1} \;|\; |\{ c \in S \;|\; a \in \tilde W(L_c) \}| \geq k } \right|. \]
This implies
\begin{align*}
|\tilde W(L)| &= \sum_{k=1}^{|S|} \left| \Set{ a \in S^{n-1} \;|\; |\{ c \in S \;|\; a \in \tilde W(L_c) \}| \geq k } \right| \\
&= \sum_{c \in S} |\tilde W(L_c)| = \sum_{c \in S} |L_c| = |L|
\end{align*}
by the induction hypothesis and some basic combinatorial identities.

Let $a \in \{A, B\}^n$ be arbitrary. Now $\tilde W(L)$ is downward closed, so in the game $([1,|S|], n, \tilde W(L), a)$, $A$ should always play the letter $1$ and $B$ should always play $|S|$. Thus $a \in W(\tilde W(L))$ if and only if $\tau(a) \in \tilde W(L)$, where $\tau$ is the letter-to-letter morphism $A \mapsto 1$ and $B \mapsto |S|$. But $\tau(a) \in \tilde W(L)$ is clearly equivalent to $a \in W(L)$ by the definition of the counting winning set.
\end{proof}

Again, we have the following corollary for subshifts.

\begin{corollary}
If $X \subset S^\I$ is a subshift, then $h(\tilde W(X)) = h(X)$ and $W(\tilde W(X)) = W(X)$.
\end{corollary}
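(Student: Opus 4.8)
The plan is to derive this corollary directly from Proposition~\ref{prop:IlkkasGeneralization} by applying its two conclusions to the finite block languages $\B_n(X)$ for each $n$, and then passing to entropies and to unions over $n$. The key bookkeeping identities are $\B(\tilde W(X)) = \tilde W(\B(X))$ (part of the definition of $\tilde W$) and $\B(W(Y)) = W(\B(Y))$ from Proposition~\ref{prop:WDownwardClosed}, together with the fact that both $W$ and $\tilde W$ are defined length-by-length. Restricting to words of length $n$ then yields $\B_n(\tilde W(X)) = \tilde W(\B_n(X))$ and $\B_n(W(Y)) = W(\B_n(Y))$, which are the commutation rules I will chain.

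For the entropy statement, I would apply the size identity $|\tilde W(L)| = |L|$ with $L = \B_n(X)$. Combined with $\B_n(\tilde W(X)) = \tilde W(\B_n(X))$, this gives $|\B_n(\tilde W(X))| = |\B_n(X)|$ for every $n$. Since $h(\cdot) = \lim_n \frac{1}{n}\log_2|\B_n(\cdot)|$ depends only on these block counts, the two subshifts have identical entropy, so $h(\tilde W(X)) = h(X)$.

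For the winning-shift statement, I would compute the block language of $W(\tilde W(X))$ at each length $n$. Using the two commutation identities and then the second conclusion $W(\tilde W(L)) = W(L)$ of Proposition~\ref{prop:IlkkasGeneralization} with $L = \B_n(X)$, I obtain
\[ \B_n(W(\tilde W(X))) = W(\B_n(\tilde W(X))) = W(\tilde W(\B_n(X))) = W(\B_n(X)) = \B_n(W(X)). \]
Since a subshift is determined by its language, equality of block languages at every length forces $W(\tilde W(X)) = W(X)$. In the two-directional case the same chain of equalities applies, now using that the two-directional winning shifts $W(\tilde W(X))$ and $W(X)$ are both defined through the language $W(\B(X))$ via the complement-forbidden-word construction, so that agreement of the defining languages again yields equality of the subshifts.

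The argument is essentially a routine transfer of the finite-length proposition to the infinite setting, so I expect no substantial obstacle. The only point requiring care is verifying that both $W$ and $\tilde W$ genuinely commute with restriction to length-$n$ words --- that is, that $\B_n$ of a winning (or counting winning) shift equals the winning (counting winning) set of the length-$n$ block language. This is immediate from the definitions of $W$ and $\tilde W$ as unions of their finite-length analogues, but must be invoked explicitly to justify each step of the displayed chain.
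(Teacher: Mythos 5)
Your treatment of the one-directional case is correct and is precisely the argument the paper intends: the corollary is stated without proof as an immediate transfer of Proposition~\ref{prop:IlkkasGeneralization} through the length-$n$ identities $\B_n(\tilde W(X)) = \tilde W(\B_n(X))$ and $\B_n(W(Y)) = W(\B_n(Y))$ (the latter from Proposition~\ref{prop:WDownwardClosed}), which is exactly your displayed chain.

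Your two-directional paragraph, however, has a genuine gap: both commutation identities you rely on are one-directional facts and actually fail over $\Z$. A two-directional winning shift is defined by forbidding the complement of $W(\B(X))$, and its language can be a \emph{proper} subset of $W(\B(X))$, because $W$ destroys left extendability (Example~\ref{ex:NotLeftExtendable}); concretely, for the SFT $Z$ of Example~\ref{ex:WinningShifts} one has $BABA \in W(\B_4(Z))$ but $BABA \notin \B_4(W(Z))$. The same phenomenon occurs for $\tilde W$: for $X = \{\INF 0 \INF, \INF 1 \INF\}$ one computes $\tilde W(\B_n(X)) = \{1^n, 21^{n-1}\}$ while $\B_n(\tilde W(X)) = \{1^n\}$. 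So in the two-directional setting your block-count identity $|\B_n(\tilde W(X))| = |\B_n(X)|$ is literally false, and your fallback justification --- that $W(\tilde W(X))$ is ``defined through the language $W(\B(X))$'' --- silently conflates $W(\B(\tilde W(X)))$ with $W(\tilde W(\B(X)))$, which are different sets in general.

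The two-directional statement is still true, but it needs an argument beyond the chain. For $W(\tilde W(X)) = W(X)$: a configuration lies in a two-directional (counting) winning shift iff all its factors lie in the (counting) winning set of the language; one inclusion follows from $\B(\tilde W(X)) \subset \tilde W(\B(X))$, monotonicity of $W$, and $W(\tilde W(\B_n(X))) = W(\B_n(X))$; for the other, use the equivalence $w \in W(L) \Leftrightarrow \tau(w) \in \tilde W(L)$ from the proof of Proposition~\ref{prop:IlkkasGeneralization} (with $\tau : A \mapsto 1$, $B \mapsto |S|$): if all factors of $a$ lie in $W(\B(X))$, then all factors of $\tau(a)$ lie in $\tilde W(\B(X))$, hence $\tau(a) \in \tilde W(X)$; since $\tilde W(X)$ is downward closed, $w \in W(\B_n(\tilde W(X))) \Leftrightarrow \tau(w) \in \B_n(\tilde W(X))$, and the latter holds for every factor $w$ of $a$. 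For the entropy claim, the inclusion $\B_n(\tilde W(X)) \subset \tilde W(\B_n(X))$ still yields $h(\tilde W(X)) \leq h(X)$, and for the reverse inequality one can observe that the two-directional $\tilde W(X)$ equals $\{a \;|\; \forall i : a_{[i,\infty)} \in \tilde W(X_\N)\}$, the two-sided extension of the one-sided shift $\tilde W(X_\N)$, and that a one-sided subshift and its two-sided extension have equal entropy (e.g., via invariant measures), giving $h(\tilde W(X)) = h(\tilde W(X_\N)) = h(X_\N) = h(X)$. In fairness, the paper states the corollary for $S^\I$ with no proof at all, so it glosses over exactly the same point; but since you asserted the chain applies verbatim two-directionally, the gap is yours to close.
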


We can now show that even in the general case $L \subset S^n$ we always have $|W(L)| \leq |L|$. This is because $W(L) = W(\tilde W(L))$ by Proposition~\ref{prop:IlkkasGeneralization}, and clearly $|W(\tilde W(L))| \leq |\tilde W(L)| = |L|$. As a corollary, $h(X) \geq h(W(X))$ holds for all subshifts $X \subset S^\I$.

Proposition~\ref{prop:Entropy} gave a lower bound for the entropy of $X \subset S^\I$, when $h(W(X))$ is known. We will next study the converse problem of bounding $h(W(X))$, when $h(X)$ and $S$ are fixed. The above corollary lets us assume that $X$ is downward closed.

Suppose that $X \subset S^\I$ is a subshift with $h(X)$ close to $\log_2 |S|$. Intuitively, we should then find configurations of $W(X)$ in which the letter $B$ appears often, giving $W(X)$ positive entropy too. Using Proposition~\ref{prop:IlkkasGeneralization}, we show that this is indeed the case.

\begin{proposition}
\label{prop:UpperBound}
Let $X \subset S^\I$ be a subshift with $h(X) > \log_2(|S|-1)$, and let $\epsilon > 0$ be such that $\left( \frac{2 e}{\epsilon} \right)^\epsilon < \frac{2^{h(X)}}{|S|-1}$. Then $h(W(X)) \geq \epsilon$.
\end{proposition}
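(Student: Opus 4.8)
The plan is to reduce to a downward closed subshift and then count the words of $X$ that use only the two extreme letters of its alphabet. By Proposition~\ref{prop:IlkkasGeneralization} and its corollary, replacing $X$ by $\tilde W(X)$ changes neither $W(X)$ nor any block count (we have $|\B_n(\tilde W(X))| = |\tilde W(\B_n(X))| = |\B_n(X)|$), while turning the subshift into a downward closed one over the ordered alphabet $[1,|S|]$. Hence I may assume $X \subset [1,|S|]^\I$ is downward closed, with minimal letter $1$ and maximal letter $|S|$.

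The key step is a clean description of $W(X)$ for downward closed $X$. Let $\phi : \{A,B\}^* \to [1,|S|]^*$ be the coordinatewise map sending $A \mapsto 1$ and $B \mapsto |S|$. I claim $a \in \B_n(W(X))$ iff $\phi(a) \in \B_n(X)$. For one direction, if $\phi(a) \in \B_n(X)$ then $A$ wins by always playing $1$: whatever $B$ does, the resulting word is coordinatewise $\leq \phi(a)$, and hence lies in $X$ by downward closedness. For the other direction, if $\phi(a) \notin \B_n(X)$, then $B$ wins by always playing $|S|$, since the outcome is then coordinatewise $\geq \phi(a)$ and therefore cannot lie in $X$. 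As $\phi$ is a bijection onto $\{1,|S|\}^n$, this gives $|\B_n(W(X))| = |\B_n(X) \cap \{1,|S|\}^n|$, the number of length-$n$ words of $X$ using only the two extreme letters.

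It remains to bound this quantity from below by $h(X)$. Partition $\B_n(X)$ according to the set $T \subseteq [0,n-1]$ of coordinates carrying the maximal letter $|S|$. By downward closedness, lowering every non-maximal coordinate of a word to $1$ keeps it in $X$, so each realized set $T$ is exactly the maximal-letter support of some extreme word in $\B_n(X) \cap \{1,|S|\}^n$; thus the number $N_n$ of realized sets $T$ equals $|\B_n(W(X))|$. On the other hand, each class of the partition contains at most $(|S|-1)^{n-|T|} \leq (|S|-1)^n$ words, since the non-maximal coordinates range over the $|S|-1$ letters $\{1,\dots,|S|-1\}$, whence $|\B_n(X)| \leq N_n\,(|S|-1)^n$. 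Taking $\frac{1}{n}\log_2(\cdot)$ and letting $n \to \infty$ yields the clean bound $h(W(X)) \geq h(X) - \log_2(|S|-1)$. The stated inequality then follows numerically: taking logarithms, the hypothesis $\left(\frac{2e}{\epsilon}\right)^\epsilon < \frac{2^{h(X)}}{|S|-1}$ reads $\epsilon \log_2\!\frac{2e}{\epsilon} < h(X) - \log_2(|S|-1)$, and since $\log_2\!\frac{2e}{\epsilon} > 1$ for $\epsilon \in (0,1)$ we obtain $\epsilon < h(X) - \log_2(|S|-1) \leq h(W(X))$.

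The only point requiring care is the min/max strategy characterization, that is, checking both monotonicity directions supplied by downward closedness; everything afterward is a one-line partition count, and the advertised $\epsilon$-inequality is merely a logarithmic rewriting of the hypothesis. I expect no genuine obstacle here, and I note that this route in fact proves the strictly stronger statement $h(W(X)) \geq h(X) - \log_2(|S|-1)$, which collapses to the equality $h(W(X)) = h(X)$ of Corollary~\ref{cor:BinaryEntropy} when $|S| = 2$.
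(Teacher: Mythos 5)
Your proof is correct, and while it opens exactly as the paper does (reduction to a downward closed subshift via Proposition~\ref{prop:IlkkasGeneralization}, then the min/max strategy characterization of $W(X)$ for downward closed $X$, which the paper phrases as ``$W(X)$ is the image of $X$ under the letter-to-letter map $k \mapsto B$, $c \mapsto A$''), your counting step takes a genuinely different and in fact sharper route. The paper bounds $|\B_n(X)| \leq \binom{n}{\delta n} 2^{\delta n} (|S|-1)^n$ where $\delta n$ is the maximal number of occurrences of the top letter, invokes \eqref{eq:BinomApprox} and the monotonicity of $\left(\frac{2e}{\delta}\right)^\delta$ to force $\delta > \epsilon$ for infinitely many $n$, and only then converts the $2^{\epsilon n}$ sub-supports of a single dense word into words of $W(X)$. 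You instead partition $\B_n(X)$ by the \emph{exact} support of the top letter, observe that downward closedness puts the realized supports in bijection with $\B_n(W(X))$, and obtain the exact inequality $|\B_n(X)| \leq |\B_n(W(X))| \, (|S|-1)^n$. This avoids the binomial machinery entirely and yields the additive bound $h(W(X)) \geq h(X) - \log_2(|S|-1)$, which is strictly stronger than the proposition (the stated form follows since $\epsilon \log_2 \frac{2e}{\epsilon} > \epsilon$ for $\epsilon \in (0,1)$), collapses to the binary equality of Corollary~\ref{cor:BinaryEntropy} when $|S| = 2$, and is tight on the paper's own example of the SFT forbidding $ss$ for all $s \in S$, whose winning shift is trivial while $h(X) = \log_2(|S|-1)$.

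Two minor caveats, neither fatal. First, your step ``$a \in \B_n(W(X))$ iff $\phi(a) \in \B_n(X)$'' uses $\B(W(X)) = W(\B(X))$, which the paper proves only for $\I = \N$; in the two-directional case $\B(W(X))$ can be a proper subset of $W(\B(X))$. For downward closed $X$ the two coincide---any extreme word of $\B_n(X)$ extends to a point of $X$ over $\{1, |S|\}$ by lowering the remaining coordinates of a configuration to the minimal letter---but this deserves a sentence, especially as the paper's own proof glosses over the same point. Second, like the paper, you implicitly need $\epsilon < 1$: the restriction appears in Proposition~\ref{prop:Entropy} but is omitted from the statement here, and without it the literal claim fails (e.g.\ $X = \{0,1\}^\Z$ and $\epsilon = 5$ satisfy the hypothesis but not the conclusion); your explicit use of $\epsilon \in (0,1)$ in the last step is therefore the intended reading rather than a gap.
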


\begin{proof}
By Proposition~\ref{prop:IlkkasGeneralization}, we can again assume that $S = \{0, \ldots, k\}$ and $X$ is downward closed. Then $W(X)$ is the image of $X$ under the letter-to-letter map defined by $k \mapsto B$ and $c \mapsto A$ for all $c \neq k$.

Let now $n \in \N$, and consider the size of $\B_n(X)$. Suppose that the maximal value of $|w|_k$ for $w \in \B_n(X)$ is $\delta n$. Then, we can give an upper bound for $|\B_n(X)|$ as follows. In a word $w \in S^n$, we first choose $\delta n$ coordinates arbitrarily, then choose some subset of these to contain the letter $k$, and finally choose the remaining (possibly all $n$) coordinates to have any value from $\{0, \ldots, k-1\}$. This way, we obtain at least all the words of $\B_n(X)$, so we have
\[ \B_n(X) \leq \binom{n}{\delta n} 2^{\delta n} k^n \leq \left( k \left(\frac{2e}{\delta}\right)^\delta \right)^n \]
using \eqref{eq:BinomApprox}, and if this holds for the same $\delta$ for infinitely many $n$, then $h(X) \leq \log_2 k \left(\frac{2e}{\delta}\right)^\delta$. Thus for infinitely many $n$, the above does \emph{not} hold for any $\delta \leq \epsilon$. But then the maximal value of $|w|_k$ for $w \in \B_n(X)$ is at least $\epsilon n$, so $\B_n(W(X)) \geq 2^{\epsilon n}$, which implies that $h(W(X)) \geq \epsilon$.
\end{proof}

We remark here that the condition $h(X) > \log_2(|S|-1)$ is necessary, since the SFT defined by the forbidden patterns $ss$ for all $s \in S$ has entropy $\log_2 (|S|-1)$, but its winning shift is the singleton $\{\INF A \INF\}$.

The bounds given by this proposition for some different values of $k$ are drawn in Figure~\ref{fig:Graph}, in the upper left corner. Note that the bound for $k = 1$, or $|S| = 2$, is redundant, as we already know by Corollary~\ref{cor:BinaryEntropy} that the binary case is restricted to the diagonal. It is nevertheless shown in the figure to remind the reader that our bounds are probably far from optimal. That said, we also conjecture that when the entropy of $X$ is very close to its theoretical maximum $\log_2 |S|$, the same should hold for $W(X)$.

\begin{conjecture}
Fix the alphabet $S$. For all $\epsilon > 0$ there exists $\delta > 0$ such that if $X \subset S^\I$ is a subshift with $h(X) > \log_2 |S| - \delta$, then $h(W(X)) > 1 - \epsilon$.
\end{conjecture}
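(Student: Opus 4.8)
The plan is to reduce to the downward closed case and then pass to invariant measures. By Proposition~\ref{prop:IlkkasGeneralization} and the corollary following it, we have $h(\tilde W(X)) = h(X)$ and $W(\tilde W(X)) = W(X)$, so we may replace $X$ by $\tilde W(X)$ and assume that $X \subset \{0, \ldots, m-1\}^\I$ is downward closed, where $m = |S|$. As in the proof of Proposition~\ref{prop:UpperBound}, $W(X)$ is then the image of $X$ under the letter-to-letter map $\pi$ that sends the top letter $m-1$ to $B$ and every other letter to $A$, so that $X = \pi^{-1}(W(X))$ and, writing $Y = W(X)$,
\[ |\B_n(X)| = \sum_{a \in \B_n(Y)} (m-1)^{n - |a|_B}. \]
The goal becomes: if this sum grows like $(m \cdot 2^{-\delta})^n$, then $h(Y) \geq 1 - \epsilon$ for some $\epsilon = \epsilon(\delta)$ with $\epsilon(\delta) \to 0$ as $\delta \to 0$.

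Second, I would extract a near-optimal invariant measure. The relative variational principle applied to the fibering $\pi$, which has $m-1$ free choices over each $A$-coordinate and one over each $B$-coordinate, gives $h(X) = \sup_\mu \left( h_\mu(Y) + (1-\rho)\log_2(m-1) \right)$, the supremum over shift-invariant measures $\mu$ on $Y$, where $\rho$ denotes the $\mu$-frequency of $B$. Let $\eta(\rho) = -\rho \log_2 \rho - (1-\rho)\log_2(1-\rho)$. The function $f(\rho) = \eta(\rho) + (1-\rho)\log_2(m-1)$ is strictly concave with a unique, nondegenerate maximum $\log_2 m$ at $\rho = 1/m$. Picking $\mu$ with $h_\mu + (1-\rho)\log_2(m-1) \geq \log_2 m - \delta$ and using $h_\mu \leq \eta(\rho)$, I obtain simultaneously $|\rho - 1/m| = O(\sqrt\delta)$ and an entropy deficit $\eta(\rho) - h_\mu \leq \delta$. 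The crucial point is that this deficit is exactly the mutual information between a coordinate and its past, $\eta(\rho) - h_\mu = I_\mu(x_0 ; x_{<0})$, so that $\mu$ is close to the Bernoulli measure of parameter $1/m$ in the strong sense that the average conditional divergence $\mathbb{E}_{\text{past}}\left[ D\!\left( \mu(x_0 \in \cdot \mid x_{<0}) \,\middle\|\, (\rho, 1-\rho) \right) \right]$ is at most $\delta$.

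Third, and this is where I expect the real difficulty, I would convert near-Bernoulliness into a lower bound on $h(Y)$. It must be stressed that the naive estimates, namely $h_\mu \leq h(Y)$ or optimizing the displayed sum over arbitrary (non-shift-invariant) pattern sets, only yield $h(Y) \gtrsim \eta(1/m) < 1$, essentially the content of Proposition~\ref{prop:UpperBound}; so shift-invariance must be used in an essential way. By Markov's inequality and Pinsker's inequality, the divergence bound forces that for all but a $\sqrt\delta$-fraction (in $\mu$-measure) of pasts, the conditional one-step distribution lies within $O(\delta^{1/4})$ of $(\rho, 1-\rho)$ in total variation, so that for such pasts \emph{both} $A$ and $B$ are legal continuations in $Y$. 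Thus along $\mu$-typical configurations the language of $Y$ branches in both directions at a $(1-\sqrt\delta)$-fraction of all positions. I would then build, on the support subshift $\mathrm{supp}(\mu) \subseteq Y$, an auxiliary invariant measure playing \emph{uniformly} among the legal continuations; its entropy rate is at least $1 - \epsilon(\delta)$ by the branching density, whence $h(Y) \geq h(\mathrm{supp}(\mu)) \geq 1 - \epsilon(\delta)$.

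The main obstacle is precisely this last step: the uniform-branching measure has different typical pasts than $\mu$, so the branching density established \emph{under $\mu$} does not transfer for free. Resolving it requires either a coupling between the two measures, a direct second-moment count of the fully branching words of length $n$ (controlling how often an exploration is driven into an atypical, non-branching past), or an appeal to Ornstein theory, where a small entropy gap together with matching single-site marginals yields $\bar d$-closeness to the Bernoulli measure and hence transfers its full support to $Y$. Any of these should push $h(Y)$ to within $\epsilon(\delta)$ of $1$ with $\epsilon(\delta) \to 0$, which is the assertion of the conjecture; quantifying $\epsilon(\delta)$ would simultaneously pin down the dependence of $\delta$ on $\epsilon$ and on $S$.
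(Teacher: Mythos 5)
First, a point of order: the statement you were given is an open \emph{conjecture} of the paper, not a theorem. The authors prove only the much weaker Proposition~\ref{prop:UpperBound} (a positive lower bound on $h(W(X))$ when $h(X) > \log_2(|S|-1)$) and explicitly leave this statement unproved, so there is no proof of record to compare against; your proposal must stand on its own, and by your own admission it does not. Your first two steps are essentially sound, modulo two directional slips that happen to be harmless: for downward closed $X$ one has only $X \subseteq \pi^{-1}(W(X))$, not equality (over $S = \{0,1,2\}$, the downward closed subshift $X = \{x \;|\; x_i = 2 \Rightarrow x_{i-1} = 0\}$ has $12 \notin \B(X)$ while $\pi(12) = AB \in \B(W(X))$), so your displayed identity should read $|\B_n(X)| \leq \sum_{a \in \B_n(Y)} (m-1)^{n-|a|_B}$, and likewise the variational formula should read $h(X) \leq \sup_\mu \left( h_\mu + (1-\rho)\log_2(m-1) \right)$, the right side being the entropy of the full skew product $\pi^{-1}(Y)$; both inequalities point the way you need. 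The concavity argument forcing $\rho = 1/m + O(\sqrt{\delta})$, the identification of the entropy deficit with $I_\mu(x_0; x_{<0})$, and your diagnosis that any argument using only the counts $N_j \leq \binom{n}{j}$ saturates at $\eta(1/m) < 1$ are all correct --- the last point is exactly why the conjecture is nontrivial for $m \geq 3$, while for $m = 2$ it is immediate from Corollary~\ref{cor:BinaryEntropy}.

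The genuine gap is the one you name yourself, and it is the entire mathematical content of the conjecture, not a deferred technicality. From ``both letters extend $\mu$-almost every typical past'' you cannot conclude $|\B_n(Y)| \geq 2^{(1-\epsilon)n}$: the candidate words are produced by repeatedly taking the off-branch, and after even one off-branch choice the resulting past may be $\mu$-null, where no branching estimate holds. None of your three repair routes is shown to close this. The mutual-information bound does yield closeness of $k$-block marginals to $\mathrm{Bernoulli}(\rho)^k$ (the total correlation of a $k$-block is at most $k\delta$), and together with the entropy bound one could plausibly invoke finite determination to get $\bar{d}$-closeness to $\mathrm{Bernoulli}(\rho)$; but $\bar{d}$-closeness controls neither $\mathrm{supp}(\mu)$ nor $h(Y)$ --- a $\bar{d}$-small perturbation of a Bernoulli measure can have very small support --- so ``transfers its full support to $Y$'' is unjustified. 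Moreover, you never use downward closedness of $Y$ in this step, and some such structural input is surely necessary: your own reduction (take $X = \pi^{-1}(Y)$ for downward closed binary $Y$, so that $W(X) = Y$ and $h(X) = \sup_\mu (h_\mu + (1-\rho_\mu)\log_2(m-1))$ with genuine equality) shows the conjecture is \emph{equivalent} to the assertion that a downward closed binary subshift carrying a measure with $\rho \approx 1/m$ and $I_\mu(x_0;x_{<0}) \leq \delta$ must have entropy near $1$ --- which is precisely the implication your step three asserts without proof. So what you have is a correct reformulation and a correctly diagnosed obstruction, i.e.\ a reasonable research plan, but not a proof; this is consistent with the statement remaining open in the paper.
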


\begin{figure}[ht]
\includegraphics[width=10cm]{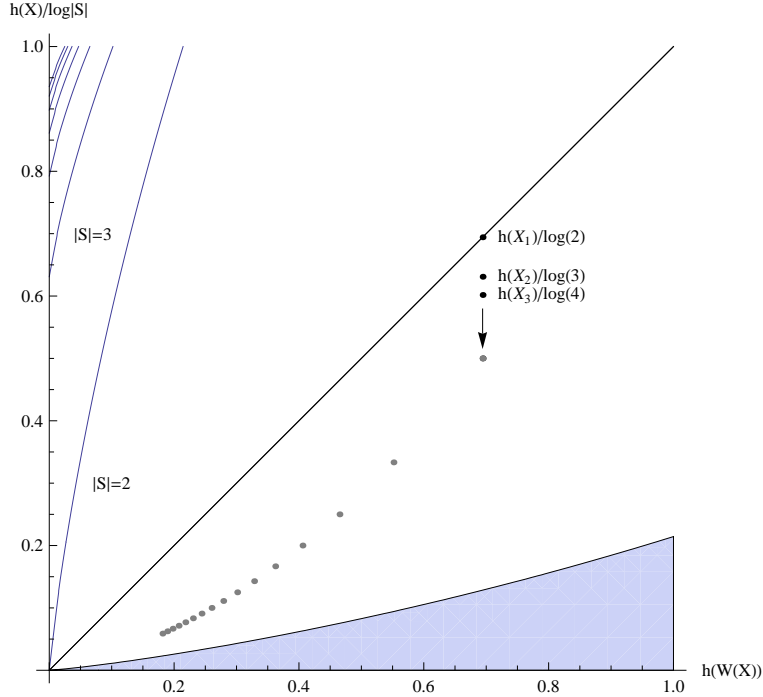}
\centering
\caption{Our results in a graphical form. The shaded area is where $h(X) / \log_2 |S|$ cannot lie by Proposition~\ref{prop:Entropy}, the diagonal line is given by Corollary~\ref{cor:BinaryEntropy} in the binary case, the lines in the upper left corner are the bounds given by Proposition~\ref{prop:UpperBound} and the points correspond to the numerical results of Section~\ref{sec:Numerical}.}
\label{fig:Graph}
\end{figure}

\section{Numerical Results}
\label{sec:Numerical}

In this section, we try to reach the lower bound given by Proposition~\ref{prop:Entropy} on $h(X) / \log_2 |S|$, when the entropy of $W(X)$ is known. This involves approximating and explicitly computing the entropies of certain SFTs, and the methods we use are related to those of \cite{Sh10}.

\begin{definition}
Let $X \subset \{0, 1\}^\I$ be a downward closed subshift, and let $S$ be a finite alphabet containing the letter $0$. Then we call the subshift
\[ \{y \in S^\I \;|\; \exists x \in X: \forall i \in \I: x_i = 0 \implies y_i = 0\} \]
the \emph{$S$-extension of $X$}, denoted by $E_S(X)$. We also denote $E_k(X) = E_{\{0, \ldots, k\}}(X)$.
\end{definition}

It is easy to see that $W(X) = W(E_S(X))$ for all binary subshifts $X$ and alphabets $S$.

\begin{proposition}
\label{prop:LowerES}
Let $X \subset S^\I$ be a subshift, where $0 \in S$. Then there exists a binary subshift $Y$ such that $h(E_S(Y)) \leq h(X)$ and $W(E_S(Y)) = W(X)$.
\end{proposition}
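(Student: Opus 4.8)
The plan is to take $Y = W(X)$ itself, viewed as a binary subshift by relabelling $A \mapsto 0$ and $B \mapsto 1$ (so the order $A < B$ becomes $0 < 1$). Since $W(X)$ is downward closed, the proposition on downward closed subshifts gives $W(Y) = Y$, and hence $W(E_S(Y)) = W(Y) = W(X)$ by the remark that $W(Z) = W(E_S(Z))$ for every binary subshift $Z$. This settles the requirement $W(E_S(Y)) = W(X)$ at once, so the entire difficulty is concentrated in the entropy bound $h(E_S(Y)) \le h(X)$.

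Write $m = |S|$. For the entropy I would first record an exact count of the words of $E_S(Y)$. Because $Y$ is downward closed and binary, a word $u \in S^n$ lies in $\B_n(E_S(Y))$ if and only if its support pattern $\mathbb{1}_{\mathrm{supp}(u)}$ lies in $\B_n(Y)$; grouping the words of $E_S(Y)$ by support pattern then yields
\[ |\B_n(E_S(Y))| = \sum_{w \in \B_n(Y)} (m-1)^{|w|_1} = \sum_{w \in \B_n(W(X))} (m-1)^{|w|_B}, \]
since each of the $|w|_B$ positions carrying a $B$ can be filled by any of the $m-1$ nonzero letters, while the $A$-positions are forced to $0$. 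Using Proposition~\ref{prop:WDownwardClosed} (in the two-directional case only the inclusion $\B_n(W(X)) \subseteq W(\B_n(X))$, which still suffices), the right-hand side is at most $\sum_{w \in W(\B_n(X))} (m-1)^{|w|_B}$.

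The crux is therefore the purely finite inequality
\[ \Phi(L) := \sum_{w \in W(L)} (m-1)^{|w|_B} \leq |L| \quad \text{for every } L \subseteq S^n, \]
which I would prove by induction on $n$, in the spirit of Proposition~\ref{prop:SameSize} and Proposition~\ref{prop:IlkkasGeneralization}. Setting $L_c = \{ v \in S^{n-1} \mid cv \in L \}$ and analysing the first move as there, $Av \in W(L)$ iff $v \in \bigcup_c W(L_c)$, while $Bv \in W(L)$ iff $v \in \bigcap_c W(L_c)$. Introducing $N(v) = |\{ c \in S \mid v \in W(L_c) \}|$ and writing $[\,\cdot\,]$ for the indicator that is $1$ when its condition holds and $0$ otherwise, the induction hypothesis summed over $c$ gives $\sum_v N(v)(m-1)^{|v|_B} = \sum_c \Phi(L_c) \leq \sum_c |L_c| = |L|$, so it is enough to check the termwise bound
\[ [\,N(v) \geq 1\,] + (m-1)\,[\,N(v) = m\,] \leq N(v), \]
which holds for every value $N(v) \in \{0, 1, \dots, m\}$ (it reduces to $0 \leq 0$ when $N(v) = 0$, to $1 \leq N(v)$ when $1 \leq N(v) \leq m-1$, and to $m \leq m$ when $N(v) = m$). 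This termwise verification is the heart of the argument and the step I expect to require the most care to frame correctly, since it is exactly what forces the factor $m-1$ rather than $m$.

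Applying the inequality to $L = \B_n(X)$ gives $|\B_n(E_S(Y))| \leq |\B_n(X)|$ for all $n$, and taking logarithms and the limit yields $h(E_S(Y)) \leq h(X)$, completing the proof. The only remaining points needing attention are the identification of $\B_n(E_S(Y))$ with support patterns in the downward closed shift $Y$, and the two-directional bookkeeping, where one uses $\B_n(W(X)) \subseteq W(\B_n(X))$ in place of the equality available in the one-directional case.
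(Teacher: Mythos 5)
Your proof is correct, but it takes a genuinely different route from the paper's. The paper first invokes Proposition~\ref{prop:IlkkasGeneralization} to replace $X$ by its counting winning set $\tilde W(X)$, a downward closed subshift over $\{0, \ldots, k\}$ with the same entropy and the same winning shift; it then takes $Y$ to be the image of this representative under the letter map $k \mapsto 1$, $c \mapsto 0$ for $c \neq k$, after which $W(X) = W(Y) = W(E_S(Y))$ follows from downward closedness, and the entropy bound is immediate from the inclusion $E_S(Y) \subseteq \tilde W(X)$ --- no fresh counting argument is needed. You instead make the canonical choice $Y = W(X)$ and do the counting yourself: your support-pattern identity $|\B_n(E_S(Y))| = \sum_{w \in \B_n(W(X))} (|S|-1)^{|w|_B}$ is valid precisely because $W(X)$ is downward closed, and it reduces the proposition to the weighted inequality $\sum_{w \in W(L)} (|S|-1)^{|w|_B} \leq |L|$, which you establish by the same first-letter decomposition used in Propositions~\ref{prop:SameSize} and~\ref{prop:IlkkasGeneralization}; your termwise verification $[\,N(v) \geq 1\,] + (m-1)[\,N(v) = m\,] \leq N(v)$ is correct for all $N(v) \in \{0, \ldots, m\}$ (the base case $n \leq 1$, which you omit, is trivial), and your two-directional bookkeeping via the inclusion $\B_n(W(X)) \subseteq W(\B_n(X))$ is exactly right, since equality can fail there while only the upper bound is needed. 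What the paper's route buys is brevity, as it reuses the $\tilde W$ machinery wholesale and gets the entropy bound from a containment of subshifts; what your route buys is a self-contained and quantitatively stronger lemma --- a weighted refinement of Proposition~\ref{prop:SameSize} that simultaneously recovers $|W(L)| \leq |L|$ and bounds how many $B$'s a winning turn order can carry --- together with a $Y$ depending only on $W(X)$ rather than on the auxiliary representative $\tilde W(X)$. (Your weighted lemma could in fact also be deduced from the paper's inclusion, since the $\{0,k\}$-relabelling of $W(X)$ sits inside the paper's $Y$, but your direct induction makes the proposition independent of the counting-game formalism.)
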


\begin{proof}
Applying Proposition~\ref{prop:IlkkasGeneralization}, we can assume that $S = \{0, \ldots, k\}$ and $X$ is downward closed. Consider the image $Y \subset \{0, 1\}^\I$ of $X$ under the symbol map defined by $k \mapsto 1$ and $c \mapsto 0$ for all $c \neq k$. Now, all of $X$, $Y$ and $E_S(Y)$ are downward closed, so $A$ should always play $0$ and $B$ should play $k$ (or $1$) in any game on $X$ or $E_S(Y)$ (or $Y$, respectively). It is then easy to see that $W(X) = W(Y) = W(E_S(Y))$.

As for the entropy, simply note that $E_S(Y) \subset X$, so $h(E_S(Y)) \leq h(X)$.
\end{proof}

This shows that it is enough to consider subshifts of the form $E_S(Y)$ when trying to reach a lower bound for the possible relative entropies of $X$, when the winning shift $W(X)$ (or its entropy) is fixed. We continue by computing the values of the entropies of $X_k = E_k(X)$ relative to $\log_2 (k+1)$ for all $k$, when $X$ is once again the golden mean shift. The language of $X_k$ is recognized by the DFA
\begin{center}
\begin{tikzpicture}[->,>=stealth',shorten >=1pt,auto,node distance=2.8cm,
                    semithick]
  \tikzstyle{every state}=[draw,inner sep=1pt,minimum size=20pt]

  \node[state,accepting] (a) {};
  \node[state,initial,initial text=,accepting] (b) [left of=a] {};
  
  \path (a) edge [bend left] node {$0$} (b)
			  (b) edge [loop above] node {$0$} (b)
			  (b) edge [bend left] node {$1, \ldots, k$} (a);
\end{tikzpicture}
\end{center}
It is known from from classical symbolic dynamics \cite{LiMa95} that the entropy of $X_k$ is exactly the logarithm of the Perron-Frobenius eigenvalue $\frac{1}{2}(1 + \sqrt{1 + 4k})$ of the adjacency matrix $\left( \begin{smallmatrix} 0 & 1 \\ k & 1 \end{smallmatrix} \right)$ of this automaton. Thus the relative entropy of $X_k$ is
\begin{equation}
\label{eq:hXk}
\frac{h(X_k)}{\log_2(k+1)} = \frac{\log_2 \left( \frac12 + \sqrt{\frac14 + k} \right)}{\log_2(k+1)} \stackrel{k \rightarrow \infty}{\longrightarrow} \frac12.
\end{equation}
We have computed some of the values of $h(X_k)/\log_2(k+1)$ in Figure~\ref{fig:Graph}. As we have $W(X_k) = W(X)$ for all $k \in \N$, the pairs $(h(W(X_k)), h(X_k))$ lie on the same vertical line in the figure.

We then generalize this result to the binary subshift $X_{(m)}$ with forbidden words $10^i1$ for all $i < m$, and show that the relative entropies $\frac{h(E_k(X_{(m)}))}{\log_2 (k+1)}$ approach the value $\frac{1}{m+1}$ as $k$ increases. The $X_{(m)}$ are examples of \emph{gap shifts}, a well-known class of binary subshifts (see \cite{LiMa95} for more information). The language of $E_k(X_{(m)})$ is recognized by the DFA
\begin{center}
\begin{tikzpicture}[->,>=stealth',shorten >=1pt,auto,node distance=2cm,
                    semithick]
  \tikzstyle{every state}=[inner sep=1pt,minimum size=20pt]

  \node[draw,state,accepting] (a0) {$0$};
  \node[draw,state,accepting] (a1) [right of=a0] {$1$};
  \node (a) [right of=a1] {$\cdots$};
  \node[draw,state,accepting] (am') [right of=a] {$m-1$};
  \node[draw,state,initial,initial text=,accepting] (am) [above of=a1] {$m$};
  
  \path (a0) edge node {$0$} (a1)
			  (a1) edge node {$0$} (a)
			  (a) edge node {$0$} (am')
			  (am') edge node [swap] {$0$} (am)
			  (am) edge [loop right] node {$0$} (am)
			  (am) edge node [swap] {$1, \ldots, k$} (a0);
\end{tikzpicture}
\end{center}
As above, we wish to compute the Perron-Frobenius of the adjacency matrix of this DFA, which is
\[
\left(
\begin{array}{ccccccc}
	0 & 1      & 0 &        & 0 & 0 \\
	0 & 0      & 1 & \cdots & 0 & 0 \\
	0 & 0      & 0 &        & 0 & 0 \\
	  & \vdots &   & \ddots & \multicolumn{2}{c}{\vdots} \\
	0 & 0      & 0 &  & 0 & 1 \\
	k & 0      & 0 & \multirow{-2}{*}{$\cdots$} & 0 & 1
\end{array}
\right)
\]
The eigenvalues of this matrix are exactly the roots of the polynomial $p_k(x) = x^{m+1} - x^m - k$, which we now analyze.

Let $\xi_k > 0$ be the largest positive root of $p_k(x)$ for a given $k \in \N$, when $m$ is fixed. By the classical Puiseux's Theorem, for large enough $k$ the root can be represented as a Puiseux series
\[ \xi_k = \sum_{i = \ell}^\infty c_i k^{-\frac{i}{q}} \]
for some $q \in \N$, $\ell \in \Z$ and coefficients $c_i \in \R$ that are independent of $k$. Now, since $\xi_k$ is a root of $p_k(x)$, the series satisfies the equation
\begin{equation}
\label{eq:XiSeries}
\left( \sum_{i = \ell}^\infty c_i k^{-\frac{i}{q}} \right)^{m+1} - \left( \sum_{i = \ell}^\infty c_i k^{-\frac{i}{q}} \right)^m = k.
\end{equation}
It is easy to see that $\ell < 0$, and then expanding the products gives $-\frac{\ell}{q} = \frac{1}{m+1}$ and $c_\ell = 1$ for the leading term. By a straightforward calculation, the second highest term of the left hand side of \eqref{eq:XiSeries} then equals
\[ (m+1) c_{\ell+1} k^{\frac{m}{q}} - k^{\frac{m}{m+1}}, \]
and since this term must vanish, we have $q = m+1$ and $c_{\ell+1} = 1/(m+1)$. By the above, this also implies that $\ell = -1$. Thus we have obtained the asymptotic formula
\[ \xi_k = \sqrt[m+1]{k} + \frac{1}{m+1} + o(1), \]
where the exact value of the $o(1)$-term is expressible as a power series in the variable $k^{-\frac{1}{m+1}}$ with no constant term. Since $h(E_k(X_{(m)})) = \log_2 \xi_k$, this in particular implies
\[ \frac{h(E_k(X_{(m)}))}{\log_2(k+1)} = \frac{\log_2 \left( \!\! \sqrt[m+1]{k} + O(1) \right)}{\log_2(k+1)} \stackrel{k \rightarrow \infty}{\longrightarrow} \frac{1}{m+1}, \]
which is exactly what we wanted.

We then estimate the entropy of the subshift $W(X_{(m)})$. This is equal to the entropy $h(X_{(m)})$, which is given by the logarithm of the largest root $\xi$ of $p_1(x) = x^{m+1} - x^m - 1$. It is easy to see that $1 < \xi < 2$ when $m \geq 2$, so we denote $\xi = 1 + \epsilon$, where $0 < \epsilon < 1$. We obtain the equation $\epsilon (1 + \epsilon)^m = 1$, and taking logarithms, this becomes $\log \epsilon + m \log (1 + \epsilon) = 0$. Using the Taylor series for $\log (1 + \epsilon)$, the left hand side is equal to
\[
\log \epsilon + m \epsilon + m t \epsilon^2,
\]
where $-\frac12 \leq t \leq \frac12$. It is easy to see that for large enough $m$, $\log x + m x + m t x^2$ is positive when $x = m^{-1}(\log m + 2 \log \log m)$, and negative when $x = m^{-1}(\log m - 2 \log \log m)$, and thus the root $\epsilon$ lies somewhere in between. Then $\epsilon = \frac{\log m}{m} + O\left( \frac{\log \log m}{m} \right)$, implying that
\[ h(W(X_{(m)})) = h(X_{(m)}) = \log_2 \left( 1 + \frac{\log m}{m} + O\left( \frac{\log \log m}{m} \right) \right). \]

We have approximated the values of $h(W(X_{(m)})) = h(X_{(m)})$ for $m$ up to $16$ in Figure~\ref{fig:Graph} (the gray dots correspond to the limit points $(h(W(X_{(m)})), \frac{1}{m+1})$, the rightmost gray point corresponding to $m = 1$).

\section*{Acknowledgements}

We would like to thank the anonymous referee for suggesting the study of minimal subshifts (Section~\ref{sec:Minimals}), for recommending that we replace our experimental results with the rigorous analysis of Section~\ref{sec:Numerical}, and for supplying us with the asymptotic bound for the maximal root of $x^{m+1} - x^m - k$. We also thank Kaisa Matom\"aki for supplying us with the bound for the maximal root of $x^{m+1} - x^m - 1$, together with the corresponding calculations.

\bibliographystyle{plain}
\bibliography{../../../bib/bib}{}

\end{document}